\def \F {{\mathcal F}}
\def \H {{\mathcal H}}
\def \R {{\mathbb R}}
\def \vr {\varrho}
\def \vrb {\overline{\varrho}}
\def \cadlag {c\`{a}dl\`{a}g}
\newtheorem{theorem}{Theorem}[section]
\newtheorem{corollary}[theorem]{Corollary}
\newtheorem{definition}[theorem]{Definition}
\newtheorem{rem}[theorem]{Remark}
\newtheorem{proposition}[theorem]{Proposition}
\newtheorem{ass}[theorem]{Assumption}
\newcommand{\ud}{\mathrm d}
\newcommand{\ds}{\displaystyle}
\numberwithin{equation}{section}
\newcommand{\mail}[1]{\href{mailto:#1}{\texttt{#1}}}
\newcommand{\ind}{\mbox{1 \hspace{-9 pt} I}}
\begin{document}

\title{The Zakai equation of nonlinear filtering for jump-diffusion observations: existence and uniqueness}
\author{Claudia Ceci \footnote{Department of Economics,
University of Chieti-Pescara, Viale Pindaro 42,
I-65127 Pescara, Italy
Email: \mail{ceci@sci.unich.it}.}
\and Katia Colaneri \footnote{Department of Economics,
University of Chieti-Pescara, Viale Pindaro 42,
I-65127 Pescara, Italy
Email: \mail{colaneri@sci.unich.it }.}}

\maketitle

\begin{abstract}
This paper is concerned with the nonlinear filtering problem for a general Markovian partially observed system $(X,Y)$, whose dynamics is modeled by correlated jump-diffusions having common jump times. At any time $t\in[0,T]$, the $\sigma$-algebra ${\cal F}^Y_t:= \sigma\{ Y_s: s\leq t\}$ provides all the available information about the signal $X_t$. The central goal of stochastic filtering is to characterize the filter, $\pi_t$, which is  the conditional distribution of  $X_t$, given the observed data ${\cal F}^Y_t$. It has been proved in \cite{cc2011}  that $\pi$ is the unique probability measure-valued process satisfying a nonlinear stochastic equation, the so-called Kushner-Stratonovich equation (KS-equation).  In  this paper the aim is to describe the filter $\pi$ in terms of the unnormalized filter $\vr$, which is  solution to  a linear stochastic differential equation, the so-called  Zakai equation.  We prove equivalence between strong uniqueness for the solution to the Kushner Stratonovich equation and strong uniqueness for the solution to the Zakai one and, as a consequence,  we deduce pathwise uniqueness for the solutions to the Zakai equation by applying the Filtered Martingale Problem approach (\cite{KO, cc2011}). To conclude, some particular cases are discussed.
\end{abstract}

Keywords: Filtering; Jump-Diffusion Processes

Amsprimary: 93E11; 60J75; 60J60

\section{Introduction}

The objective of stochastic filtering is to find the best estimate, in some sense, of the state process $X$ of a stochastic dynamical system $(X,Y)$, from a partial observation described by a  process $Y$. This subject found several applications throughout the years, which includes a great variety of engineering problems, the study of the global climate, the estimation of the economy condition, the identification of tumours using digital recording.
\medskip

The literature concerning nonlinear filtering is quite rich; within the first results we mention \cite{Z} and \cite{Ku}. We can distinguish three main scenarios based on different dynamics of the observation process. In \cite{Ka, LiS, KO} the process $Y$ gives observation of $X$ in additional Gaussian noise, in \cite{Br, KKM, FR2, CG1, CG5, C} it is studied the case of counting process or  marked point process observations and more recently the case of mixed type observations (marked point processes and diffusions) has been taken into account in \cite{FR3, FS, CC, FSX, cc2011}. We want to focus on this last case which is the same considered in this note. In \cite{FR3, FS, CC} the information flow has the structure ${\cal F}^m_t \vee {\cal F}^\eta_t$, where $m(\ud t,\ud z)$ is a marked point process whose dynamics is influenced by a stochastic factor $X$ and $\eta$ gives observations of $X$ in additional Gaussian noise. These particular structures of the observation have a financial motivation; nevertheless in a general framework, it may be  meaningful to consider the case where the observation flow is generated by a jump-diffusion process as in the model developed in \cite{cc2011} and also in this paper. For this model, in \cite{cc2011}, the filtering problem has been studied using the innovation approach and the filter $\pi$ (defined by $\pi_t(f):=\mathbb{E}[f(t,X_t)|\mathcal{F}^Y_t]$, for some suitable functions $f$)  has been characterized as the unique solution to the Kushner-Stratonovich equation (or KS-equation). The equation found is not only infinite dimensional but also non linear, and it has a complicated structure that makes it is not suitable for computation. On the other hand, the problem can be faced from a different point on view, that of the Zakai equation for the unnormalized filter. This equation still remains infinite dimensional, but it has the advantage to be linear, that is the reason why it is well suited to be analyzed under numerical approximations, such as, for example the Galerkin method (see \cite{FSX}, \cite{GP1} and \cite{GP2}) or the optimal quantization approach \cite{GPPP}. In particular, in  \cite{FSX}, the Galerkin method is applied to  a mixed type observation, given by the pair $(\eta, \widetilde{N})$ where  $\widetilde{N}$ is a compensated Poisson process with unobservable intensity  and  $\eta$ gives observations of $X$ in additional Gaussian noise.

\medskip

In this paper we want to study the filtering problem from this second point of view of the Zakai equation. To be more precise, we will look for an appropriate Girsanov change of probability measure on $(\Omega, {\cal F}^Y_T, P |_{\mathcal{F}^Y_T})$, where $T$ is some fixed time horizon, which leads to an  equivalent probability measure $P_0$, defined by $\ds \left.\frac{\ud P_0}{\ud P} \right |_{\mathcal{F}^Y_T}=Z_T$, such that  the associated unnormalized filter, $\vr_t(\ud x):= Z_t^{-1} \pi_t(\ud x)$, solves  a linear equation of Zakai's  type.  Moreover, under additional hypotheses the new probabilty measure $P_0$ defined on $(\Omega, {\cal F}^Y_T)$ coincides with the restriction on ${\cal F}^Y_T$ of a probability measure equivalent to P on ${\cal F}_T$, which allows us to establish the analogy between our approach and the classical one based on the Kallianpur-Striebel formula.

\medskip

In order to represent the filter in terms of the unnormalized one, we need to prove uniqueness of the solutions to the Zakai equation. We show the equivalence between strong uniqueness for the KS-equation and strong uniqueness for the Zakai equation. Then, we deduce pathwise uniqueness for the solutions to the Zakai equation by  pathwise uniqueness results for the solutions to the KS-equation proved in \cite{cc2011} by applying the Filtered Martingale Problem approach. This method was introduced in \cite{KO} and then generalized in \cite{KN}. In both of the papers it is applied to prove strong uniqueness for both of the equations, KS and Zakai, in frameworks of signals observed in Gaussian white noise. Here, we extend the uniqueness result for the Zakai equation in the more general case of jump-diffusion observations. To the authors' knowledge this is the first time that the dynamics of the unnormalized filter is computed in the case of a partially observed system $(X,Y)$, where the signal $X$ and the observation $Y$ are described by correlated jump diffusion processes having common jump times.

\medskip

The paper is organized as follows. The filtering model is described in Section 2. In Section 3 we derive the Zakai equation of the nonlinear filtering problem. Technical difficulties introduced by working with real valued random counting measures instead of counting processes brought us to make the assumption that there exists a transition function $\eta(t,y,\ud z)$ such that the $\mathcal{F}^Y_t$-predictable measure $\eta(t,Y_{t^-},\ud z)$ is  equivalent to $\F^Y_t$-dual predictable projection of the random counting measure $m(\ud t, \ud z)$, associated with the jumps of the process $Y$, $\pi_{t^-}(\lambda \phi(\ud z))$, where $\pi_{t^-}$ denotes the left version of the filter. In Section 4 we discuss uniqueness for the solutions of the Zakai equation. In Section 5, we conclude by giving some examples where pathwise uniqueness for the solutions to the Zakai equation is fulfilled. In particular we analyze three models. In the first  the observation process is given by a jump diffusion with jump sizes in a finite set; in the second one we consider the case where the observation dynamics is driven by independent point processes with unobservable intensities; in the last one we assume that the state process $X$ is pure jump process taking values in a countable space. For the first two examples we compute explicitly the measure $\eta(t,Y_{t^-},\ud z)$ which ensure us that existence and uniqueness for the Zakai equation hold. Instead in the third one we derive directly, by a recursive procedure, uniqueness for the solution to the Zakai.

\section{The partially observed model and preliminary results}

Throughout the paper, we consider a partially observed system $(X,Y)$, on a complete filtered probability space  $(\Omega, \{{\cal F}_t\} _{t \in [0,T]}, P)$, where $T >0$ is some fixed time horizon. The dynamics of the system is described by the following pair of stochastic differential equations

\begin{equation}\label{sistema}
\left\{\begin{split}
\ud X_t=&\;b_0(t,X_t)\ud t+\sigma_0(t,X_t)\ud W^0_t+\int_Z   K_0(t,X_{t^-};\zeta)N(\ud t,\ud \zeta);\qquad X_0=x_0\in \mathbb{R}\vspace{1em}\\
\ud Y_t=&\;b_1(t,X_t,Y_t)\ud t+\sigma_1(t,Y_t)\ud W^1_t+\int_Z   K_1(t,X_{t^-},Y_{t^-};\zeta)   N(\ud t,\ud \zeta);\qquad Y_0=y_0\in\mathbb{R}
\end{split}\right.
\end{equation}

where $W^0$ and $W^1$ are two correlated Brownian motions with correlation coefficient $\rho\in [-1,1]$ and $N(\ud t,\ud \zeta)$ is a Poisson random measure on $\mathbb{R}^+\times Z$ whose intensity $\nu(\ud \zeta)\ud t$ is a $\sigma-$finite measure on a measurable space $(Z,\mathcal{Z})$.
\medskip

In this model, $X$ represents a signal, also called the state process, which cannot be directly observed and the process $Y$, described by a correlated process having common jump times with $X$, gives the observation.
\medskip

The coefficients $b_0(t,x)$, $b_1(t,x,y)$, $\sigma_0(t,x) >0, \sigma_1(t,y)>0$, $K_0(t,x;\zeta)$ and $K_1(t,x,y;\zeta)$
are $\mathbb{R}$-valued measurable functions of their arguments. As in \cite {cc2011} we assume strong existence and uniqueness for the solutions of the system $(\ref{sistema})$. Sufficient conditions are summarized by Assumption $\ref{assumption_c}$ in Appendix C.

\medskip
At any time $t$ the $\sigma$- algebra ${\cal F}^Y_t := \sigma \{ Y_s : s\leq t\}$ provides all the available information about the signal $X_t$. Our aim is to characterize the conditional distribution of $X_t$ given ${\cal F}^Y_t$, that represents the most detailed description of our knowledge of  $X_t$.

\medskip

In order to describes the jump component of $Y$ we introduce the integer-valued random measure

\begin{equation} \label{m} m(\ud t, \ud z) =  \sum_{s: \Delta Y_s \neq 0}
  \delta _{\{s, \Delta Y_s\}} (\ud t, \ud z)
\end{equation}
where $\delta_a$ denotes the Dirac measure at  point $a$. Note that the following equality holds
\begin{equation}\label{misura_m}
\int_0^t\int_{\mathbb{R}}z\;m(\ud s,\ud z)=\int_0^t\int_{Z}K_1(s,\zeta)N(\ud s,\ud \zeta)
\end{equation}
and, in general, for any measurable function $g:\mathbb{R}\rightarrow \mathbb{R}$
\begin{equation}\label{integrale_rispetto_m}
\int_0^t\int_{\mathbb{R}}g(z)\;m(\ud s,\ud z)=\int_0^t\int_{Z}\ind_{\{K_1(s,\zeta)\neq 0\}}g\left(K_1(s,\zeta)\right)N(\ud s,\ud \zeta).
\end{equation}

\medskip

For all $t\in[0,T]$, for all $A\in\mathcal{B}(\mathbb{R})$, we define

 $$d^0(t,x):=\{\zeta\in Z :K_0(t,x;\zeta)\neq0\},\hspace{3em}
                    d^1(t,x,y):=\{\zeta\in Z:K_1(t,x,y;\zeta)\neq0\},$$

                  \begin{equation} \label{d} d^A(t,x,y):=\{\zeta\in Z : K_1(t,x,y;\zeta)\in A\smallsetminus\{0\}\}\subseteq d^1(t,x,y), \end{equation}
             and, finally,
  \begin{equation} \label{insiemi} D^A_t = d^A(t,X_{t^-},Y_{t^-}) \subseteq D_t = d^1(t,X_{t^-},Y_{t^-}), \quad D^0_t = d^0(t,X_{t^-}). \end{equation}

Normally  $D^0_t \cap D_t  \neq\emptyset\;\;P-a.s.$ and this models the fact that state process and observation  may have common jump times.
 \medskip

In the sequel we will write  $b_i(t),  \sigma_i(t), K_i(t , \zeta)$, $i=0,1,$ for
 $b_0(t,X_t), b_1(t,X_t,Y_t)$, $\sigma_0(t,X_t), \sigma_1(t,Y_t)$, $K_0(t,X_{t^-};\zeta)$ and $K_1(t,X_{t^-},Y_{t^-};\zeta)$
   respectively and
 we will assume the following requirements

 \begin{ass}\label{hp_ks}

\begin{equation*}
\mathbb{E}  \int_0^T \int_{Z}|K_i(t, \zeta)| \nu(\ud \zeta)  \ud t  <  \infty, \quad \mathbb{E}  \int_0^T |b_i(t)| \ud t    < \infty,  \quad  \mathbb{E} \int_0^T \sigma_i^2(t) \ud t  <  \infty \quad i=0,1;
\end{equation*}

\begin{equation}\label{hp_ks2}
\mathbb{E}\int_0^T\nu(D^0_t \cup D_t)\ud t<\infty.
\end{equation}

\end{ass}

Note that under these constraints the pair $(X,Y)$ is a Markov process and both of the processes $X$ and $Y$ have finite first moment.
\medskip


As proved in Proposition 2.2 of \cite{C}, the $(P,{\cal F}_t)$-dual predictable projection, $m^p(\ud t,\ud z)$, of  $m(\ud t,\ud z)$ (see  \cite{JS, Br} for the definition), can be written as

 \begin{equation} \label{cara} m^p(\ud t,\ud z) = \lambda_t \phi_t(\ud z) \ud t, \end{equation}

where $\phi_t(\ud z)$ is a probability measure over $(\mathbb{R},\mathcal{B}(\mathbb{R}))$ and $\forall A \in \mathcal{B}(\mathbb{R})$

\begin{equation} \label{mp}
m^p(\ud t,A) = \lambda_t \phi_t(A) \ud t = \nu( D^A_t) \ud t.
\end{equation}

Define the functions $\lambda(t,x,y):=\nu(d^1(t,x,y))$ and $\phi(t,x,y,\ud z):=\int_{d^1(t,x,y)}\delta_{K_1(t,x,y;\zeta)}(\ud z) \nu(\ud \zeta)$, then the $\ds (P,\mathcal{F}_t)$-local characteristics of the integer valued counting measure $m(\ud t, \ud z)$, given by
\begin{equation}\label{caratt.locali}
(\lambda_t, \phi_t(\ud z))=(\lambda(t, X_{t^-},Y_{t^-}), \phi(t, X_{t^-},Y_{t^-},\ud z)),
\end{equation}
depend on the state process, and therefore they are not directly observable. In particular, $\ds \forall A \in \mathcal{B}(\mathbb{R})$, $\lambda_t\phi_t(A)=\nu(D^A_t)$ is the $(P,\mathcal{F}_t)-$intensity of the point process $N_t(A)=m((0,t]\times A)$ that counts the jumps of the process $Y$ until time $t$ whose widths belong to $A$ and $ \lambda_t = \nu( D_t)$ provides the $(P, {\cal F}_t)$-predictable intensity of the point process $N_t = m((0,t]\times \mathbb{R} )$ which counts the total number of jumps of $Y$ until  $t$.
\medskip

Similarly, define the function $\lambda^0(t,x):= \nu(d^0(t,x))$, the process $ \lambda^0_t:=\lambda^0(t,X_{t^-})=\nu(D^0_t )$ furnishes the $(P, {\cal F}_t)$-predictable intensity of the point process $N^0_t $ which counts the total number of jumps of $X$ until time $t$. Condition $(\ref{hp_ks2})$ imply that the processes $N$ and $N^0$ are both non-explosive and integrable (\cite{Br}).

\medskip

Let us introduce the filter defined as
\begin{equation} \label{filter}
\pi_t(f) : =  \mathbb{E} [ f(t,X_t) | {\cal F}^Y_t ]
\end{equation}

 for any measurable function $f(t,x)$ such that $ \mathbb{E} |f(t,X_t) |< \infty$ $\forall t\in [0,T]$.  It is known  that $\pi$ is a probability measure-valued process with c\`{a}dl\`{a}g trajectories (see \cite{KO}). We denote by $\pi_{t^-}$ his left version. In particular, for all functions $F(t,x,y)$ such that $ \mathbb{E} |F(t,X_t, Y_t) |< \infty$ (resp. $ \mathbb{E} |F(t,X_{t^-}, Y_{t^-}) |< \infty$) $\forall t\in [0,T]$, we will use the notation

 \begin{equation*}
 \pi_t(F):=\pi_t(F(t,\cdot,Y_t))\qquad \Big(\textrm{resp.}\quad \pi_{t^-}(F):=\pi_{t^-}(F(t,\cdot,Y_{t^-}))\Big).
 \end{equation*}

\medskip

\begin{rem}
We recall that for any $\F_t$-progressively measurable process $\psi$, satisfying the inequality $\mathbb{E}  \int_0^T|\psi_t|  \ud t < \infty$, the process
$\mathbb{E} [  \int_0^T \psi_t  \ud t  | {\cal F}^Y_t ] - \int_0^T \pi_t(\psi)  \ud t$   is a $(P,\mathcal{F}^Y_t)$-martingale. In particular, this implies that

\begin{equation}\label{vecchia}
 \mathbb{E}  \int_0^T| \pi_t (\psi) |  \ud t =  \mathbb{E}  \int_0^T |\psi_t|  \ud t  < \infty.
 \end{equation}
 \end{rem}

Denote by  $\nu^p(\ud t, \ud z)$  the $(P,\mathcal{F}^Y_t)$-predictable projection of the integer-valued measure $m(\ud t,\ud z)$; the following proposition, proved in \cite{C}, gives a representation of $\nu^p(\ud t, \ud z)$  in terms of the filter.

\medskip

\begin{proposition}\label{proiezione_predicibile}

The $(P,\mathcal{F}^Y_t)$-predictable projection of the integer-valued measure $m(\ud t,\ud z)$ is given by

\begin{equation}\label{basta}
 \nu ^p(\ud t,\ud z) =    \pi_{t^-} (\lambda\phi(\ud z))\ud t,
\end{equation}

that is, for  any $A \in \mathcal{B}(\mathbb{R})$

\begin{equation} \label{filtra}\nu ^p((0,t] \times A) =
 \int_0^t  \pi_{s^-}(\lambda \phi(A)) \ud s=   \int_0^t  \pi_{s ^-}\big (\nu(d^A(., Y_{s^-}) ) \big)\ud s .\end{equation}

\end{proposition}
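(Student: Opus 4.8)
\emph{Proof strategy.} The plan is to check that the random measure $\bar\nu(\ud t,\ud z):=\pi_{t^-}(\lambda\phi(\ud z))\,\ud t$ enjoys the two defining properties of the $(P,\mathcal{F}^Y_t)$-dual predictable projection of $m$ --- predictability and the compensation identity --- and then to conclude by uniqueness of that projection. Recall that $\nu^p$ is the unique (up to indistinguishability) $\mathcal{F}^Y_t$-predictable random measure such that, for every $A\in\mathcal{B}(\mathbb{R})$, the process $N_t(A)-\nu^p((0,t]\times A)$ is a $(P,\mathcal{F}^Y_t)$-martingale; the integrability needed to have a true martingale is guaranteed by Assumption~\ref{hp_ks}, since $N_t(A)\le N_t$ and $\mathbb{E}\int_0^T\nu(D_t)\,\ud t<\infty$. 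By a monotone class argument it is then enough to identify $\nu^p((0,t]\times A)$ for all $A$.

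First I would verify that $\bar\nu$ is indeed an $\mathcal{F}^Y_t$-predictable random measure with finite mean total mass. Predictability holds because $\pi$ has \cadlag\ $\mathcal{F}^Y_t$-adapted trajectories, so $t\mapsto\pi_{t^-}$ is left-continuous and adapted, and $A\mapsto\pi_{t^-}(\lambda\phi(A))=\pi_{t^-}\big(\nu(d^A(\cdot,Y_{t^-}))\big)$ is a measure in $A$, measurable in $(\omega,t)$; thus the $\ud t$-density of $\bar\nu$ is a predictable kernel. Integrability follows from \eqref{hp_ks2} together with \eqref{vecchia} applied to $\psi_t=\lambda_t=\nu(D_t)$, giving $\mathbb{E}\int_0^T\bar\nu(\ud t,\mathbb{R})=\mathbb{E}\int_0^T\pi_{t^-}(\lambda)\,\ud t=\mathbb{E}\int_0^T\lambda_t\,\ud t<\infty$.

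The heart of the argument is to show that, for fixed $A\in\mathcal{B}(\mathbb{R})$, the process $N_t(A)-\int_0^t\pi_{s^-}(\lambda\phi(A))\,\ud s$ is a $(P,\mathcal{F}^Y_t)$-martingale. By \eqref{cara}--\eqref{mp} the $(P,\mathcal{F}_t)$-dual predictable projection of $m(\ud t,A)$ is $\nu(D^A_t)\,\ud t$, so $N_t(A)-\int_0^t\nu(D^A_s)\,\ud s$ is a $(P,\mathcal{F}_t)$-martingale, whence for every $s<t$ and $B\in\mathcal{F}^Y_s$,
$$\mathbb{E}\big[\ind_B\,(N_t(A)-N_s(A))\big]=\mathbb{E}\Big[\ind_B\int_s^t\nu(D^A_u)\,\ud u\Big].$$
Now apply the Remark stated above to the $\mathcal{F}_t$-progressive process $\psi_u:=\nu(D^A_u)=\nu\big(d^A(u,X_{u^-},Y_{u^-})\big)$, which by \eqref{hp_ks2} satisfies $\mathbb{E}\int_0^T\psi_u\,\ud u<\infty$: from the martingale property recorded there one deduces $\mathbb{E}[\ind_B\int_s^t\psi_u\,\ud u]=\mathbb{E}[\ind_B\int_s^t\pi_u(\psi)\,\ud u]$ for all $B\in\mathcal{F}^Y_s$. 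Finally $\int_s^t\pi_u(\psi)\,\ud u=\int_s^t\pi_{u^-}(\lambda\phi(A))\,\ud u$ almost surely, since $\pi_u=\pi_{u^-}$ and $Y_u=Y_{u^-}$ for all but countably many $u$, while $\pi_{u^-}(\lambda\phi(A))=\pi_{u^-}\big(\nu(d^A(\cdot,Y_{u^-}))\big)$ by the notational convention and the identity $\lambda(t,x,y)\phi(t,x,y,A)=\nu(d^A(t,x,y))$. Letting $B$ and $s<t$ range over all admissible choices yields the desired martingale property.

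Combining the above, $\int_0^t\pi_{s^-}(\lambda\phi(A))\,\ud s$ is continuous, $\mathcal{F}^Y_t$-adapted --- hence predictable --- and compensates $N_t(A)$, so by uniqueness of the dual predictable projection it coincides with $\nu^p((0,t]\times A)$; this is \eqref{filtra}, and the measure form \eqref{basta} follows by upgrading from indicators $\ind_A(z)$ to nonnegative $\mathcal{F}^Y_t$-predictable integrands via the monotone class theorem. I expect the only delicate points to be the bookkeeping ones --- justifying the Lebesgue-a.e.\ identifications $\pi_u\leftrightarrow\pi_{u^-}$, $Y_u\leftrightarrow Y_{u^-}$ and $\psi_u\leftrightarrow\lambda_u\phi_u(A)$, and making the passage from the set form \eqref{filtra} to the measure/kernel form \eqref{basta} precise --- whereas the martingale identity itself is a routine consequence of the $\mathcal{F}_t$-compensator formula and the projection Remark.
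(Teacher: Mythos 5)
Your argument is correct and is the standard one: transfer the $(P,\mathcal{F}_t)$-compensator $\nu(D^A_t)\,\ud t$ to the subfiltration $\mathcal{F}^Y_t$ via the projection identity of the Remark, handle the Lebesgue-null discrepancies between $\pi_u,\pi_{u^-}$ and $X_u,X_{u^-}$, and invoke uniqueness of the dual predictable projection plus a monotone class step. The paper itself gives no proof here --- it defers to Proposition 2.2 of \cite{C} --- and your write-up is exactly the expected argument, with the integrability of $N_t(A)$ and of $\psi_u=\nu(D^A_u)$ correctly traced back to \eqref{hp_ks2}.
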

\medskip

The measure
\begin{equation}
m^\pi( \ud t, \ud z )= m(\ud t, \ud z)-\pi_{t^-}(\lambda \phi (\ud z))\ud t
\end{equation}

is called the $\mathcal{F}^Y_t$-compensated martingale measure and has the property that for all $\F^Y_t$-predictable process indexed by $z$, $H(t,z)$ satisfying
\[
\mathbb{E}\left[\int_0^T\int_{\mathbb{R}}H(t,z)\pi_{t^-}(\lambda\phi(\ud z))\ud t\right]<\infty \qquad \left(\textrm{resp.}\quad \int_0^T\int_{\mathbb{R}}H(t,z)\pi_{t^-}(\lambda\phi(\ud z))\ud t<\infty \quad P-a.s.\right),
\]
the process $\ds \int_0^t\int_{\mathbb{R}}H(s,z)m^{\pi}(\ud s, \ud z)$ is a $(P, \F^Y_t)$-martingale (resp. local martingale).

\bigskip

Finally, assume that

\begin{equation}
 \mathbb{E} \int_0^T \left|\frac{b_1(t)}{\sigma_1(t)}\right|\ud t\ <\infty\quad \textrm{and}\quad  \mathbb{E} \int_0^T \pi_t^2\left|\frac{b_1}{\sigma_1}\right|\ud t <\infty , \label{hp_deboli1}
 \end{equation}

then we can define the so called innovation process $I$, which, in our framework, is given by

\begin{equation} \label{inn}
I_t:=W^1_t + \int_0^t \left \{ \frac{b_1(s)}{\sigma_1(s)}-\pi_{s}\left(\frac{b_1}{\sigma_1}\right)\right \}\ud s.
\end{equation}

It is not difficult to verify that the process $I$ is a $(P,\mathcal{F}^Y_t)$-Brownian motion.

\begin{rem}
Let us notice that, by Jensen's inequality and $(\ref{vecchia})$,  the condition
\begin{equation}
 \mathbb{E}  \int_0^T\left| \frac{b_1(t) }{\sigma_1(t) } \right| ^2 \ud t\ <\infty, \label{hp_forte1}
 \end{equation}
 which is usually required in the classical approach, implies $( \ref{hp_deboli1})$.

\end{rem}

\medskip

The process $I$ and the $\mathcal{F}^Y_t$-compensated martingale measure $m^\pi $, play a central role in describing the dynamics of the filter.
More precisely,  in \cite{cc2011}, under Assumption $\ref{hp_ks}$, $(\ref{hp_forte1})$ and assuming that the process
\begin{equation} \label{MG}
L_t=\mathcal{E}\left\{-\int_0^t  \frac{b_1(s)}{\sigma_1(s)} \ud W^1_s\right\}\
\end{equation}
 is a $(P, {\cal F}_t)$-martingale  ($\mathcal{E}$ denotes the Dol\'{e}ans-Dade exponential),
it is proved that the filter is  a solution to the Kushner-Stratonovich equation driven by $I$ and $m^\pi $. This result can be improved and the theorem stated below gives the same thesis under weaker hypotheses. In particular, we replace condition $(\ref{hp_forte1})$ with  $(\ref{hp_deboli1})$ and the assumption that  $L$ is   a $(P, {\cal F}_t)$-martingale with  the hypothesis that
\begin{equation} \label{MG1}
  \widehat {L} _t=\mathcal{E}\left\{-\int_0^t  \pi_s \left(\frac{b_1}{\sigma_1}\right)\ud I_s\right\}\
\end{equation}
is a $(P, {\cal F}^Y_t)$-martingale.

\begin{rem}
Observe that if $L$ is   a $(P, {\cal F}_t)$-martingale  then $ \widehat{L}$ is a $(P, {\cal F}^Y_t)$-martingale. In fact, if we define the probability measure  $\widetilde{Q}_0$  equivalent to $P$ over   ${\cal F}_T$,
such that

 \begin{equation} \label{Q1}
 \left.{\ud \widetilde{Q}_0 \over \ud P}\right|_{\mathcal{F}_T} = L_T,
\end{equation}

then by the Girsanov Theorem, the process
\begin{equation}\label{girsanov}
\widetilde{W}^1_t  := W_t^1+\int_0^t \frac{b_1(s)}{\sigma_1(s)}\ud s
\end{equation}

 is a $(\widetilde{Q}_0, {\cal F}_t)$-Wiener process. Taking into account $(\ref{inn})$, we get
 \begin{equation}\label{wiener}
 \widetilde{W}^1_t = I_t + \int_0^t \pi_s  \left( \frac{b_1}{\sigma_1} \right)\ud s,
 \end{equation}
 which implies that $\widetilde{W}^1$ is a $(\widetilde{Q}_0,  {\cal F}^Y_t)$-Wiener process.
Again by the Girsanov Theorem we deduce that
  \begin{equation} \label{Qg}
  \widehat {L} _t =  \left.{\ud \widetilde{Q}_0 \over \ud P}\right|_{{ \cal F}^Y_t }= \mathbb{E}  [ L_t | { \cal F}^Y_t ], \end{equation}

then  $\widehat {L}$ is  a $(P, {\cal F}^Y_t)$-martingale.
\end{rem}

 \begin{theorem}[The Kushner-Stratonovich equation] \label{EQ}
 Assume that Assumption $\ref{hp_ks}$ and $(\ref{hp_deboli1})$ hold and that  $\widehat L$ defined in $(\ref{MG1})$ is a $(P, {\cal F}^Y_t)$-martingale, then the filter $\pi$ solves the following Kushner-Stratonovich equation,  that is,  $\forall f \in C^{1.2}([0,T] \times \R)$
\begin{equation} \label{ks}
\pi_t (f) = f(0,x_0) + \int_0^t \pi_s(L^X f) \ud s +
\int_0^t\int_{\mathbb{R}}  w^\pi_s(f,z) m^\pi (\ud s, \ud z)   + \int_0^t   h^\pi_s(f)  \ud I_s   \end{equation}

where
\begin{equation} \label{d1}
 w^\pi_t (f,z) = {\ud\pi_{t^-} (\lambda\phi f) \over  \ud\pi_{t^-} (\lambda \phi )} (z) - \pi_{t^-}(f) +
  { \ud\pi_{t^-} (\overline{L} f) \over  \ud\pi_{t^-} \left(\lambda \phi \right)} (z), \end{equation}

 \begin{equation} \label{d2}
  h^\pi_t(f)= \sigma_1^{-1}(t)[\pi_{t}( b_1 f) - \pi_{t}(b_1) \pi_{t}(f)] + \rho  \pi_{t}\left(\sigma_0  {\partial f \over \partial x}\right).   \end{equation}

 Here by ${\ud\pi_{t^-} (\lambda\phi f) \over  \ud\pi_{t^-} (\lambda \phi )} (z) $ and
$  { \ud\pi_{t^-} (\overline{L} f) \over  \ud\pi_{t^-} \left(\lambda \phi \right)} (z)$ we mean the Radon-Nikodym derivatives of the measures $\pi_{t^-} (\lambda f \phi (\ud z))$ and $\pi_{t^-} (\overline{L} f)(\ud z)$, with respect to $\pi_{t^-} \left(\lambda \phi (\ud z) \right)$. The operator  $\bar L_tf$ defined by $\ds \bar L_t f (\ud z):= \bar L f(., Y_{t^-}, \ud z)$ and

\begin{equation}\label{operatoreL}
\forall A \in \mathcal{B}(\mathbb{R}) \quad  \bar L f(t,x,y,A) := \int_{d^A(t,x,y)} [ f(t, x + K_0(t,x;\zeta)) - f(t,x) ] \nu(\ud \zeta)
\end{equation}

takes into account common jump times between the signal $X$ and the observation $Y$.

Finally, the operator $L^X$ given by

\begin{equation*}
L^Xf(t,x)=\frac{\partial f}{\partial t}+ b_0(t,x) \frac{\partial f}{\partial x}+ \frac{1}{2}\sigma_0^2(t,x) \frac{\partial^2 f}{\partial x^2 }+ \int_Z \{ f(t, x + K_0(t,x;\zeta)) - f(t,x) \} \nu(\ud \zeta).
\end{equation*}
denotes  the generator of the Markov process $X$.

\end{theorem}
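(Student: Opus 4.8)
The plan is to derive the Kushner--Stratonovich equation by testing the filter against smooth functions and exploiting the innovations structure. Concretely, fix $f \in C^{1,2}([0,T]\times\R)$. First I would apply the It\^o formula to $f(t,X_t)$ using the dynamics $(\ref{sistema})$ of $X$, splitting the result into a finite-variation part $\int_0^t L^X f(s,X_s)\,\ud s$, a continuous martingale part $\int_0^t \sigma_0(s)\,\partial_x f(s,X_s)\,\ud W^0_s$, and a purely discontinuous martingale part driven by the compensated Poisson measure $\widetilde N(\ud s,\ud\zeta) = N(\ud s,\ud\zeta) - \nu(\ud\zeta)\,\ud s$ involving the jump term $f(s, X_{s^-}+K_0(s;\zeta)) - f(s,X_{s^-})$. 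Then the general filtering representation (the remark preceding the statement, plus its counterpart for the martingale measure $m^\pi$ via Proposition~\ref{proiezione_predicibile}) tells us that $\pi_t(f)$ differs from $f(0,x_0) + \int_0^t \pi_s(L^X f)\,\ud s$ by an $\F^Y_t$-martingale, so the heart of the proof is to identify that martingale explicitly in terms of $I$ and $m^\pi$.

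The identification proceeds via a martingale representation argument on the filtered space. I would invoke the fact (from \cite{cc2011, KO}) that every $(P,\F^Y_t)$-martingale can be written as a stochastic integral against the innovation Brownian motion $I$ and against the compensated martingale measure $m^\pi(\ud s,\ud z) = m(\ud s,\ud z) - \pi_{s^-}(\lambda\phi(\ud z))\,\ud s$. So the filter martingale $M^f_t := \pi_t(f) - f(0,x_0) - \int_0^t \pi_s(L^X f)\,\ud s$ admits a representation $\int_0^t h^\pi_s(f)\,\ud I_s + \int_0^t\int_\R w^\pi_s(f,z)\,m^\pi(\ud s,\ud z)$ for some $\F^Y$-predictable integrands, and the remaining task is to compute $h^\pi$ and $w^\pi$. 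The jump integrand $w^\pi_t(f,z)$ is obtained by matching the jump of $\pi_t(f)$ at a jump time of $Y$ of size $z$ with the jump of the representation; this requires computing $\mathbb E[\Delta f(t,X_t)\mathbf 1_{\Delta Y_t\in \ud z}\mid \F^Y_t]$ and dividing by the $\F^Y$-intensity $\pi_{t^-}(\lambda\phi(\ud z))$, which is exactly where the three terms in $(\ref{d1})$ come from: the term $\ud\pi_{t^-}(\lambda\phi f)/\ud\pi_{t^-}(\lambda\phi)$ is the conditional post-jump value of $f$ when $X$ does not jump simultaneously, $-\pi_{t^-}(f)$ subtracts the pre-jump value, and $\ud\pi_{t^-}(\bar L f)/\ud\pi_{t^-}(\lambda\phi)$ accounts for the common jumps of $X$ and $Y$, using the operator $\bar L$ of $(\ref{operatoreL})$. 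The Brownian integrand $h^\pi_t(f)$ comes from computing the bracket $\ud\langle M^f, I\rangle_t$: the $\sigma_1^{-1}(t)[\pi_t(b_1 f) - \pi_t(b_1)\pi_t(f)]$ piece is the standard "filter covariance" term arising from the dependence of $Y$'s drift on $X$, while $\rho\,\pi_t(\sigma_0\,\partial_x f)$ arises from the correlation $\rho$ between $W^0$ and $W^1$.

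A cleaner route, which I would actually follow for the write-up, is the classical change-of-measure (reference probability) trick adapted to this setting: using the martingale $\widehat L$ of $(\ref{MG1})$ one can pass to $\widetilde Q_0$ under which $\widetilde W^1$ of $(\ref{girsanov})$--$(\ref{wiener})$ is an $\F^Y$-Wiener process, derive a linear (Zakai-type) equation there, and then transfer back via $\widehat L$ to get $(\ref{ks})$; the remark preceding the theorem already sets up precisely this $\widehat L$-based change of measure, and the hypothesis that $\widehat L$ is a genuine $(P,\F^Y_t)$-martingale is exactly what makes the transfer legitimate under the weakened assumption $(\ref{hp_deboli1})$ in place of $(\ref{hp_forte1})$. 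The main obstacle, and the reason the hypotheses are delicate, is the integrability bookkeeping: one must ensure that all the conditional expectations are well-defined, that the Radon--Nikodym derivatives in $(\ref{d1})$ exist (which uses the absolute continuity of $\pi_{t^-}(\lambda\phi f(\ud z))$ and $\pi_{t^-}(\bar L f)(\ud z)$ with respect to $\pi_{t^-}(\lambda\phi(\ud z))$, following from the definitions of $\phi$ and $\bar L$ both being built from $\nu$ restricted to level sets of $K_1$), and that the stochastic integrals against $I$ and $m^\pi$ are true martingales rather than merely local ones — this is where $(\ref{hp_deboli1})$, Assumption~\ref{hp_ks}, equation $(\ref{vecchia})$, and the martingale property of $\widehat L$ are each used. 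Compared to \cite{cc2011}, the novelty is purely in relaxing $(\ref{hp_forte1})$ to $(\ref{hp_deboli1})$, so I expect the bulk of the argument to cite \cite{cc2011} and focus only on showing that the weaker conditions still suffice for every integrability step.
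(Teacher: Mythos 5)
Your primary route (first two paragraphs) is essentially the paper's proof. The paper reduces everything to Theorem 3.2 of \cite{cc2011} (the innovation method) and only has to check that the $(P,\mathcal{F}^Y_t)$-martingale representation theorem in terms of $I$ and $m^\pi$ (Proposition 2.6 of \cite{cc2011}) survives the weakening of $(\ref{hp_forte1})$ to $(\ref{hp_deboli1})$; this is done exactly as you indicate, by defining $Q_0$ on $(\Omega,\mathcal{F}^Y_T)$ through $\widehat L_T$, observing that $\widetilde W^1_t = I_t + \int_0^t \pi_s(b_1/\sigma_1)\,\ud s$ is then a $(Q_0,\mathcal{F}^Y_t)$-Wiener process, deducing $\mathcal{F}^Y_t=\mathcal{F}^m_t\vee\mathcal{F}^{\widetilde W^1}_t$ from $\ud Y_t=\int_{\mathbb{R}} z\, m(\ud t,\ud z)+\sigma_1(t,Y_t)\,\ud\widetilde W^1_t$, and concluding that every $(P,\mathcal{F}^Y_t)$-local martingale is a sum of integrals against $I$ and $m^\pi$. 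Your identification of $h^\pi$ via the bracket with $I$ (including the $\rho\,\pi_t(\sigma_0\partial_x f)$ correlation term) and of $w^\pi$ via jump matching is the content of the cited innovation argument.

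The ``cleaner route'' you say you would actually write up, however, does not go through as stated. The density $\widehat L$ of $(\ref{MG1})$ is driven only by $I$: under $Q_0$ it removes the drift $\pi_s(b_1/\sigma_1)$ from the continuous part but leaves the $\mathcal{F}^Y_t$-compensator $\pi_{t^-}(\lambda\phi(\ud z))\,\ud t$ of $m$ untouched, so the equation satisfied by $\widehat L_t^{-1}\pi_t(f)$ under $Q_0$ still contains the nonlinear Radon--Nikodym terms ${\ud\pi_{t^-}(\lambda\phi f)}/{\ud\pi_{t^-}(\lambda\phi)}$ in its jump part --- it is not a linear (Zakai-type) equation. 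Linearizing the jump part requires the additional factor $Z^1$ of $(\ref{num3})$ and hence the reference measure $\eta$ of Assumption $\ref{assumption_eta}$, which is \emph{not} among the hypotheses of Theorem $\ref{EQ}$; moreover the paper's logic runs in the opposite direction (the Zakai equation of Section \ref{zak} is \emph{derived from} the KS-equation via It\^o's formula applied to $\vr_t(f)=\pi_t(f)\vr_t(1)$), so deriving KS from Zakai here would be circular without an independent derivation of the latter. Stick with the innovation route; the change of measure via $\widehat L$ is needed only to secure the martingale representation, not to produce a linear equation.
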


\begin{proof}

The proof is similar to that of Theorem 3.2 in \cite{cc2011}. We only need to observe that the $(P, {\cal F}^Y_t)$-martingale representation Theorem in terms of $I$ and $m^\pi$, proved in Proposition 2.6 in \cite{cc2011}, still holds true even if we replace the condition $(\ref{hp_forte1})$ with  $(\ref{hp_deboli1})$ and the assumption that the process $L$,  defined by $(\ref{MG})$, is   a $(P, {\cal F}_t)$-martingale with  the hypothesis that $\widehat {L}$ given in $(\ref{MG1})$ is a $(P, {\cal F}^Y_t)$-martingale.
In fact, it is sufficient introduce  the probability measure  ${Q}_0$ on  $(\Omega, {\cal F}^Y_T)$, equivalent to the restriction of $P$ over ${\cal F}^Y_T$, defined as

 \begin{equation} \label{Q1}
 \left.{\ud Q_0 \over \ud P}\right|_{\mathcal{F}^Y_T} = \widehat {L}_T.
\end{equation}

By the Girsanov Theorem $I_t + \int_0^t \pi_s  \left( \frac{b_1}{\sigma_1} \right)\ud s$  is a $(Q_0, {\cal F}^Y_t)$-Wiener process and, taking into account $(\ref{inn})$, we obtain that
$ I_t + \int_0^t \pi_s  \left( \frac{b_1}{\sigma_1} \right)\ud s = W_t^1+\int_0^t \frac{b_1(s)}{\sigma_1(s)}\ud s = \widetilde{W}^1_t $.

We write $\mathcal{F}^m_t$ for the filtration generated by the random counting measure $m(\ud t,\ud z)$, then, since
\begin{equation*}
    \ud Y_t=\int_{\mathbb{R}}z\; m(\ud t,\ud z) +\sigma_1(t,Y_t) \ud \widetilde{W}^1_t,
\end{equation*}
as in Proposition 2.6 in \cite{cc2011}, we can deduce that $ \mathcal{F}^Y_t=\mathcal{F}^m_t\vee\mathcal{F}^{\widetilde{W}^1}_t$ and that every   $(P, \mathcal{F}^Y_t)$-local martingale $M_t$ admits  the following decomposition

\begin{equation} \label{rapp1}
M_t = M_0 + \int_0^t  \int_{\mathbb{R}} w(s,z) m^\pi (\ud s, \ud z) + \int_0^t  h(s) \ud I_s,
 \end{equation}
where $w(t,z)$ is an $\mathcal{F}^Y_t$-predictable process  and $h(t)$ is an $\mathcal{F}^Y_t$-adapted process  such that
$$ \int_0^T \int_{\mathbb{R}} \left|w(t,z)\right|   \pi_{t^-}(\lambda \phi(\ud z))\ud t < \infty,  \quad \int_0^T h(t) ^2 \ud t < \infty \quad P-a.s.$$

Finally,  as in  Theorem 3.2 in  \cite{cc2011},  by applying the innovation method, we can conclude that the filter $\pi$ solves the equation $(\ref{ks})$.
\end{proof}

Let us observe that the KS-equation is an infinite-dimensional and nonlinear stochastic differential equation and so, in general, it is difficult to handle. Then it can be useful to characterize the filter in terms of a simpler equation. For doing so we will determine a probability measure $P_0$ over $(\Omega, {\cal F}^Y_T)$, equivalent to the restriction of $P$ onto $ {\cal F}^Y_T$, defined by

\begin{equation} \label{Den} \left.\frac{\ud P_0}{\ud P}\right|_{\mathcal{F}^Y_t}=Z_t, \end{equation}

where $Z$ is a suitable strictly positive $(P, {\cal F}^Y_t)$-martingale, chosen in such a way that the so-called unnormalized filter
$\vr$, defined by
\begin{equation}
\varrho_t(\ud x):= Z^{-1}_t \pi_t(\ud x)
 \end{equation}
 satisfies a linear stochastic differential equation, the Zakai equation.

\begin{rem}
Note that, if the measure $P_0$ is the restriction of a probability measure $\widetilde{P}_0$ equivalent to $P$ over the whole filtration $\mathcal{F}_T$  then the unnormalized filter can be written as
$$\ds \varrho_t(f):= \mathbb{E}^{\widetilde{P}_0}\left[f(t,X_t) \widetilde{Z}_t^{-1}|\mathcal{F}^Y_t\right], $$
where   $ \widetilde{Z}_t  := \ds \left.\frac{\ud \widetilde{P}_0}{\ud P}\right|_{\mathcal{F}_t}$. This follows from the well known  Kallianpur-Striebel formula

\begin{equation}\label{kall_stri}
\pi_t(f)=\frac{\mathbb{E}^{\widetilde{P}_0}\left[f(t,X_t) \widetilde{Z}_t^{-1}|\mathcal{F}^Y_t\right]}{\mathbb{E}^{\widetilde{P}_0}\left[  \widetilde{Z}_t^{-1}|\mathcal{F}^Y_t\right]},
\end{equation}

since $\mathbb{E}^{\widetilde{P}_0}\left[  \widetilde{Z}_t^{-1}|\mathcal{F}^Y_t\right] =  Z^{-1}_t $.  In order to derive the Zakai equation under mild conditions we do not require the existence of  such a  probability measure $\widetilde{P}_0$ defined on $(\Omega, \mathcal{F}_T)$, as in the classical reference probability method, but we will work directly with the probability measure $P_0$ defined on $(\Omega, {\cal F}^Y_T)$.
\end{rem}

The first step is to mention a complete version of the Girsanov Theorem to be applied on the model considered in this note.

\subsection{Girsanov change of probability}

\begin{theorem}\label{Girsanov}

Let $\varphi(t)$ and $\psi(t,z)$ be two processes $\mathcal{F}^Y_t$-adapted and  $\mathcal{F}^Y_t$-predictable respectively such that

\begin{equation}
 \int_0^T |\varphi(t)|^2 \ud t < \infty , \quad \int_0^T  \int_{\mathbb{R}} |\psi( t , z) | \pi_t(\lambda \phi (\ud z)) \ud t < \infty \quad P-{\rm a.s.}\label{altre1}
 \end{equation}
 \begin{equation}
1+\int_{\mathbb{R}}\psi(t,z) m(\{t\},\ud z)>0  \quad P-a.s.\;\;\; \forall t\in[0,T].\label{altre2}
\end{equation}
Define the process $L$ as
 \begin{equation} \label{density}
\ud L_t=L_{t^-}\left[\varphi(t)\ud I_t+\int_{\mathbb{R}} \psi( t, z) \left( m(\ud t,\ud z) -\pi_{t^-}(\lambda \phi (\ud z)) \ud t  \right)  \right].
\end{equation}
$L$ is a $(P,\mathcal{F}^Y_t)$-strictly positive local martingale.
If more
\begin{equation}
\mathbf E[ L_T ] =1 \label{martingala},
\end{equation}
 $L$ is a strictly positive $(P,\mathcal{F}^Y_t)$-martingale.

\noindent
Then, under $(\ref{martingala})$,  there exists a probability measure $Q$ defined on $(\Omega,\mathcal{F}^Y_T)$, equivalent to the restriction of  $P$ over $\mathcal{F}^Y_T$, such that

$$\left.\frac{ \ud  Q}{ \ud P}\right|_{\mathcal{F}^Y_t} = L_t,$$
and
\begin{description}
\item[(i)]
the process $\ds W^{Q}_t:= I_t - \int_0^t\varphi(s)\ud s$
is a $(Q, \mathcal{F}^Y_t)$-Brownian motion
\item[(ii)] the $(Q, \mathcal{F}^Y_t)$-dual predictable projection of the integer-valued measure $m(\ud t, \ud z)$ is
$$
\nu^{Q}(\ud z)\ud t=(1+\psi( t, z)) \pi_{t^-}(\lambda\phi (\ud z)) \ud t.
$$
\end{description}

\end{theorem}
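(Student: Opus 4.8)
The plan is to prove Theorem~\ref{Girsanov} (the Girsanov change of probability) in the standard two-stage fashion: first establish the local-martingale and positivity claims for $L$, then under $(\ref{martingala})$ pass to the measure $Q$ and identify the $Q$-semimartingale characteristics of $I$ and of $m$. First I would observe that the driving processes are a $(P,\mathcal{F}^Y_t)$-Brownian motion $I$ (from $(\ref{inn})$) and the compensated martingale measure $m^\pi(\ud t,\ud z)=m(\ud t,\ud z)-\pi_{t^-}(\lambda\phi(\ud z))\ud t$. Under the integrability conditions $(\ref{altre1})$, the stochastic integrals $\int_0^t\varphi(s)\ud I_s$ and $\int_0^t\int_{\mathbb{R}}\psi(s,z)m^\pi(\ud s,\ud z)$ are well-defined $(P,\mathcal{F}^Y_t)$-local martingales (the latter by the martingale-measure property recalled in the excerpt, here in its local form). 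Hence $L$ is the Dol\'{e}ans--Dade exponential $L_t=\doleans{\int_0^\cdot\varphi(s)\ud I_s+\int_0^\cdot\int_{\mathbb{R}}\psi(s,z)m^\pi(\ud s,\ud z)}_t$, which is automatically a nonnegative local martingale; strict positivity follows because the only jumps of $L$ are the jumps of $m$, and at a jump time $t$ the multiplicative increment is $1+\int_{\mathbb{R}}\psi(t,z)m(\{t\},\ud z)$, which is $>0$ by $(\ref{altre2})$, while between jumps $L$ evolves continuously via the exponential of the Brownian part. So $L>0$ and $L^{-1}$ is locally bounded, giving the first two assertions.

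Next, assuming $\esp[\mathbf E]{L_T}=1$, the local martingale $L$ is a true martingale, and $\left.\ud Q/\ud P\right|_{\mathcal{F}^Y_t}=L_t$ defines a probability measure $Q\sim P|_{\mathcal{F}^Y_T}$. For part (i), I would apply Girsanov's theorem for continuous local martingales: under $Q$, $I_t-\langle I,\, \int_0^\cdot\varphi\,\ud I\rangle_t = I_t-\int_0^t\varphi(s)\ud s$ is a continuous $(Q,\mathcal{F}^Y_t)$-local martingale with the same quadratic variation $\langle I\rangle_t=t$ as under $P$ (the jump part of $L$ does not contribute to the bracket with the continuous martingale $I$, since $I$ and $m^\pi$ are orthogonal — $I$ being continuous and $m^\pi$ purely discontinuous); by L\'{e}vy's characterization $W^Q_t:=I_t-\int_0^t\varphi(s)\ud s$ is a $(Q,\mathcal{F}^Y_t)$-Brownian motion. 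For part (ii), I would use the Girsanov theorem for random measures (as in Jacod--Shiryaev): if $m$ has $(P,\mathcal{F}^Y_t)$-compensator $\nu^p(\ud t,\ud z)=\pi_{t^-}(\lambda\phi(\ud z))\ud t$ and the density process has the multiplicative jump structure above with ``jump density'' $1+\psi(t,z)$ relative to $\nu^p$, then the $(Q,\mathcal{F}^Y_t)$-compensator of $m$ is $(1+\psi(t,z))\,\pi_{t^-}(\lambda\phi(\ud z))\ud t$. Concretely, this amounts to checking that for every bounded $\mathcal{F}^Y_t$-predictable $H(t,z)$ the process $\int_0^t\int_{\mathbb{R}}H(s,z)\big(m(\ud s,\ud z)-(1+\psi(s,z))\pi_{s^-}(\lambda\phi(\ud z))\ud s\big)$ is a $(Q,\mathcal{F}^Y_t)$-local martingale, equivalently that its product with $L$ is a $(P,\mathcal{F}^Y_t)$-local martingale — an integration-by-parts computation using $\ud L_s=L_{s^-}[\varphi(s)\ud I_s+\int\psi(s,z)m^\pi(\ud s,\ud z)]$ and the orthogonality of the continuous and jump parts.

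The routine but slightly delicate part is the integration-by-parts verification in (ii): one writes $\ud(L_t N_t)$ for $N_t=\int_0^t\int_{\mathbb{R}}H(s,z)\big(m(\ud s,\ud z)-(1+\psi(s,z))\pi_{s^-}(\lambda\phi(\ud z))\ud s\big)$, collects the finite-variation terms coming from $L_{t^-}\,\ud N_t$, $N_{t^-}\,\ud L_t$, and $\ud[L,N]_t$, and checks that the $\ud t$-terms cancel: the bracket contributes $L_{t^-}\int_{\mathbb{R}}\psi(t,z)H(t,z)m(\ud t,\ud z)$, whose compensator is $L_{t^-}\int_{\mathbb{R}}\psi(t,z)H(t,z)\pi_{t^-}(\lambda\phi(\ud z))\ud t$, and this is exactly what is needed to turn $-L_{t^-}\int_{\mathbb{R}}H(t,z)\psi(t,z)\pi_{t^-}(\lambda\phi(\ud z))\ud t$ coming from $L_{t^-}\ud N_t$ into a martingale increment. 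The main obstacle, really the only conceptual one, is handling the interaction between the Brownian and jump components cleanly — making sure the cross-bracket $[I, m^\pi]$ vanishes and that the exponential $L$ genuinely factors into a continuous Dol\'{e}ans part and a discrete product over jump times — together with keeping the integrability bookkeeping in $(\ref{altre1})$ honest so that all the ``local martingale'' statements are legitimate without extra moment assumptions. Since the result is the expected analogue of the classical Girsanov theorem for the filtering setting, I would simply cite Jacod--Shiryaev \cite{JS} and Br\'{e}maud \cite{Br} for the general random-measure and point-process Girsanov statements and indicate the specialization to $(\ref{density})$, rather than reproving those theorems from scratch.
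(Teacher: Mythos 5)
Your proof is correct. The paper in fact states Theorem~\ref{Girsanov} without proof, treating it as the standard Girsanov theorem specialized to the jump-diffusion filtering setting; your argument --- the Dol\'{e}ans--Dade exponential structure of $L$ for the local-martingale and strict-positivity claims (positivity from $(\ref{altre2})$ since the only jumps of $L$ come from $m$), L\'{e}vy's characterization for part (i), and the random-measure Girsanov theorem verified by the integration-by-parts cancellation in $\ud(L_tN_t)$ for part (ii) --- is precisely the classical route via \cite{JS} and \cite{Br} that the paper implicitly relies on.
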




It may be useful to investigate on whether the condition $(\ref{martingala})$
is satisfied, that is,  under which hypotheses $L$ is a strictly positive martingale. For the diffusive case the Novikov criterium provides a sufficient condition, for the most general case, there exists a similar criterium less known in literature, due to  Protter and Shimbo (Theorem 9 in \cite{ps2008}), that we mention below.

\begin{theorem}\label{thm9_protter}
Let $M$ be a locally square integrable martingale such that $\Delta M >-1$. If
\begin{equation}\label{protter_shimbo}
\mathbb{E}\left[\exp\left\{ \frac{1}{2}\langle M^c,M^c \rangle_{T}  +  \langle M^d,M^d  \rangle_{T}     \right\}\right]<\infty,
\end{equation}
where $M^c$ and $M^d$ are the continuous and the purely discontinuous martingale parts of $M$, then $\mathcal{E}(M)$ is a martingale on $[0,T]$, where $T$ can be $\infty$.
\end{theorem}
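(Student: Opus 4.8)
The plan is the classical ``positive supermartingale plus uniform integrability'' route. First I would note that $\mathcal{E}(M)$ is, by It\^o's formula, a local martingale, and that, since $\Delta M>-1$, every factor $1+\Delta M_s$ in its multiplicative representation is strictly positive, so $\mathcal{E}(M)>0$; being a strictly positive local martingale it is then a supermartingale, whence $\mathbb{E}[\mathcal{E}(M)_t]\le\mathbb{E}[\mathcal{E}(M)_0]=1$ for every $t\le T$, and $\mathcal{E}(M)$ is a genuine martingale on $[0,T]$ if and only if $\mathbb{E}[\mathcal{E}(M)_T]=1$. Next I would fix a localizing sequence of stopping times $\tau_n\uparrow T$ such that each stopped process $\mathcal{E}(M)^{\tau_n}$ is a uniformly integrable martingale; then $\mathbb{E}[\mathcal{E}(M)_{\tau_n\wedge T}]=1$ and $\mathcal{E}(M)_{\tau_n\wedge T}\to\mathcal{E}(M)_T$ $P$-a.s., so the whole statement reduces to showing that $\{\mathcal{E}(M)_{\tau_n\wedge T}\}_n$ is uniformly integrable, since in that case the convergence is also in $L^1$ and the terminal mass $1$ is preserved. (If $T=\infty$ the same reduction applies, $\langle M^c,M^c\rangle_\infty$ and $\langle M^d,M^d\rangle_\infty$ being $P$-a.s.\ finite by \eqref{protter_shimbo}.)

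For the uniform integrability I would split $M=M^c+M^d$ into its continuous and purely discontinuous martingale parts. Because $[M^c,M^d]=0$, Yor's formula gives the factorization $\mathcal{E}(M)=\mathcal{E}(M^c)\,\mathcal{E}(M^d)$, with $\mathcal{E}(M^c)>0$ automatically and $\mathcal{E}(M^d)>0$ since $\Delta M^d=\Delta M>-1$. The continuous factor is handled by the classical Novikov--Kazamaki argument: the hypothesis $\mathbb{E}[\exp(\tfrac12\langle M^c,M^c\rangle_T)]<\infty$ is exactly Novikov's condition for $M^c$, so $\mathcal{E}(M^c)$ is a uniformly integrable martingale, and one keeps the quantitative bounds on stopped moments that this proof produces. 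The purely discontinuous factor is the delicate one: here the symmetric scaling identity $\mathcal{E}(N)^{b}=\mathcal{E}(bN)\exp(\tfrac{b^2-b}{2}\langle N^c,N^c\rangle)$ used in the continuous case is useless for $b>1$, since $\Delta M>-1$ does not imply $b\,\Delta M>-1$ and so $\mathcal{E}(bM^d)$ need not be positive. Instead the plan is to follow the L\'epingle--M\'emin approach, whose exponential criterion for $\mathcal{E}(M^d)$ involves the jump-entropy functional $\sum_{s\le\cdot}[(1+\Delta M_s)\log(1+\Delta M_s)-\Delta M_s]$; one bounds this functional \emph{pathwise} by the optional quadratic variation $[M^d,M^d]=\sum(\Delta M_s)^2$ via the elementary inequality
\[
(1+x)\log(1+x)-x \;\le\; x^{2},\qquad x>-1,
\]
and then passes to the predictable compensator $\langle M^d,M^d\rangle$. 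It is this last step, from $[M^d,M^d]$ to $\langle M^d,M^d\rangle$ (combined with a Cauchy--Schwarz splitting between the compensator and the associated martingale), that forces the coefficient $1$, rather than $\tfrac12$, in front of $\langle M^d,M^d\rangle_T$ in \eqref{protter_shimbo}; with that coefficient, $\mathbb{E}[\exp(\langle M^d,M^d\rangle_T)]<\infty$ suffices to make $\mathcal{E}(M^d)$ a uniformly integrable martingale.

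Finally I would glue the two factors: H\"older's inequality applied to $\mathcal{E}(M)_{\tau_n\wedge T}=\mathcal{E}(M^c)_{\tau_n\wedge T}\,\mathcal{E}(M^d)_{\tau_n\wedge T}$, together with the controls obtained for the two factors and the de la Vall\'ee--Poussin criterion, should yield $\sup_n\mathbb{E}[\Phi(\mathcal{E}(M)_{\tau_n\wedge T})]<\infty$ for a suitable increasing convex $\Phi$ with $\Phi(x)/x\to\infty$, hence the required uniform integrability; passing to the limit and using $\mathbb{E}[\mathcal{E}(M)_{\tau_n\wedge T}]=1$ gives $\mathbb{E}[\mathcal{E}(M)_T]=1$, which is the assertion. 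The main obstacle of the whole programme is the purely discontinuous factor $\mathcal{E}(M^d)$: essentially all the difficulty, and the asymmetry between the constants $\tfrac12$ and $1$, is concentrated there, because the positivity constraint $\Delta M>-1$ is one-sided and rules out the symmetric interpolation that makes the continuous case elementary, so it must be replaced by the compensator estimate above. The details are carried out in Protter--Shimbo \cite{ps2008}.
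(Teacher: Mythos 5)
First, a point of reference: the paper does not prove this statement at all --- it is quoted verbatim as Theorem 9 of Protter and Shimbo \cite{ps2008} --- so your sketch has to be measured against the actual argument in \cite{ps2008}, which is a single application of the L\'epingle--M\'emin criterion. You have correctly identified the decisive ingredients: the jump-entropy functional $\sum_{s\le\cdot}[(1+\Delta M_s)\log(1+\Delta M_s)-\Delta M_s]$ and the elementary bound $(1+x)\log(1+x)-x\le x^2$ for $x>-1$. Note, however, that the coefficient $1$ in front of $\langle M^d,M^d\rangle_T$ comes from the sharpness of this inequality at $x=-1$, not from any ``Cauchy--Schwarz splitting between the compensator and the associated martingale'': passing from $[M^d,M^d]$ to $\langle M^d,M^d\rangle$ is just monotonicity of dual predictable projections applied to a pathwise inequality between increasing processes.

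The genuine gap is your gluing step. The L\'epingle--M\'emin theorem applies to the whole local martingale $M$: if $\tilde B_T$ denotes the predictable compensator of $\tfrac12\langle M^c,M^c\rangle_T+\sum_{s\le T}[(1+\Delta M_s)\log(1+\Delta M_s)-\Delta M_s]$ and $\mathbb{E}[e^{\tilde B_T}]<\infty$, then $\mathcal{E}(M)$ is a uniformly integrable martingale; combined with the inequality above, which gives $\tilde B_T\le\tfrac12\langle M^c,M^c\rangle_T+\langle M^d,M^d\rangle_T$, this yields the theorem in one stroke, with no factorization. Your route --- Yor's formula, Novikov for $\mathcal{E}(M^c)$, L\'epingle--M\'emin for $\mathcal{E}(M^d)$, then H\"older --- does not close: Novikov's condition at the critical constant $\tfrac12$ gives $\mathcal{E}(M^c)_T\in L^1$ but no $L^p$ bound for any $p>1$, and likewise for the jump factor, so no conjugate exponents are available for H\"older; and even knowing that both orthogonal factors are uniformly integrable martingales would not make their product (a positive local martingale) a true martingale, since $[M^c,M^d]=0$ is far weaker than independence. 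The splitting also wastes the hypothesis, which is a joint exponential moment of $\tfrac12\langle M^c,M^c\rangle_T+\langle M^d,M^d\rangle_T$ rather than two separate ones. The remedy is simply not to split: apply the L\'epingle--M\'emin criterion to $M$ itself. Everything else in your reduction (strictly positive local martingale $\Rightarrow$ supermartingale, martingale on $[0,T]$ iff $\mathbb{E}[\mathcal{E}(M)_T]=1$, localization) is fine.
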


The following corollary translates the theorem we have just stated in our setting.

\begin{corollary}
Let $\varphi(t)$ and $\psi(t,x)$ be two processes $\mathcal{F}^Y_t$-adapted and $\mathcal{F}^Y_t$-predictable respectively, satisfying (\ref{altre2}). Assume that

\begin{equation}\label{shimbo_3}
\mathbb{E}\left[       e^{  \int_0^T     \left\{       \frac{1}{2}  |\varphi(t)|^2 + \int_{\mathbb{R}} |\psi (t,z)|^2   \pi_t(\lambda \phi (\ud z))    \right\} \ud t }\right]<\infty,
\end{equation}

then the process $L$
is a $(P,\mathcal{F}^Y_t)$-martingale on $[0,T]$.

\end{corollary}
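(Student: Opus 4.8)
The plan is to deduce the corollary directly from Theorem~\ref{thm9_protter} by identifying the driving martingale $M$ and checking that the quantity appearing in the Protter--Shimbo exponential integrability condition $(\ref{protter_shimbo})$ is dominated by the quantity in $(\ref{shimbo_3})$. First I would set
\[
M_t := \int_0^t \varphi(s)\,\ud I_s + \int_0^t\int_{\mathbb{R}} \psi(s,z)\, m^\pi(\ud s,\ud z),
\]
so that, by the Dol\'{e}ans--Dade equation $(\ref{density})$, we have $L = \mathcal{E}(M)$. The integrability hypotheses $(\ref{altre1})$ guarantee that both stochastic integrals are well defined $P$-a.s., and in fact $(\ref{shimbo_3})$ is considerably stronger, so $M$ is a locally square integrable $(P,\mathcal{F}^Y_t)$-martingale; the assumption $(\ref{altre2})$ is exactly the requirement $\Delta M > -1$ needed to apply Theorem~\ref{thm9_protter} (the jumps of $M$ are $\int_{\mathbb{R}}\psi(t,z)\,m(\{t\},\ud z)$, since $I$ is continuous).

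Next I would compute the two bracket processes. The continuous part is $M^c_t = \int_0^t \varphi(s)\,\ud I_s$, because $I$ is a $(P,\mathcal{F}^Y_t)$-Brownian motion, whence $\langle M^c, M^c\rangle_T = \int_0^T |\varphi(t)|^2\,\ud t$. The purely discontinuous part is the compensated jump integral, and its predictable quadratic variation is controlled by the angle bracket computed with the $(P,\mathcal{F}^Y_t)$-compensator $\pi_{t^-}(\lambda\phi(\ud z))\,\ud t$ of $m$; namely
\[
\langle M^d, M^d\rangle_T = \int_0^T\int_{\mathbb{R}} |\psi(t,z)|^2\, \pi_{t^-}(\lambda\phi(\ud z))\,\ud t .
\]
(One may pass freely between $\pi_{t^-}$ and $\pi_t$ inside the $\ud t$-integral since they differ only on a countable set of times.) Substituting these into $(\ref{protter_shimbo})$ gives
\[
\tfrac12\langle M^c,M^c\rangle_T + \langle M^d,M^d\rangle_T = \int_0^T\Big\{\tfrac12|\varphi(t)|^2 + \int_{\mathbb{R}} |\psi(t,z)|^2\,\pi_t(\lambda\phi(\ud z))\Big\}\ud t,
\]
which is precisely the exponent in $(\ref{shimbo_3})$. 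Hence $(\ref{shimbo_3})$ says exactly that $\mathbb{E}[\exp\{\tfrac12\langle M^c,M^c\rangle_T + \langle M^d,M^d\rangle_T\}]<\infty$, so Theorem~\ref{thm9_protter} yields that $L=\mathcal{E}(M)$ is a $(P,\mathcal{F}^Y_t)$-martingale on $[0,T]$, which is the assertion.

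The main obstacle, and the only point requiring real care, is the bracket computation for the discontinuous part: one must justify that the predictable quadratic variation $\langle M^d,M^d\rangle$ of the compensated integer-valued-measure integral is $\int_0^\cdot\int_{\mathbb{R}}|\psi(t,z)|^2\,\pi_{t^-}(\lambda\phi(\ud z))\,\ud t$ rather than merely an upper bound, and that the compensator of $m$ under $(P,\mathcal{F}^Y_t)$ is indeed $\pi_{t^-}(\lambda\phi(\ud z))\,\ud t$ — but this is exactly Proposition~\ref{proiezione_predicibile}, and the bracket formula is the standard identity for purely discontinuous martingales built from a compensated random measure (it can be taken from \cite{JS}). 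A minor subtlety is that $M^d$ must genuinely be a (local) martingale, i.e.\ the jump integral is compensated correctly; this is built into the definition $(\ref{density})$ of $L$, and the local square integrability follows because $(\ref{shimbo_3})$ in particular forces $\int_0^T\int_{\mathbb{R}}|\psi(t,z)|^2\,\pi_t(\lambda\phi(\ud z))\,\ud t<\infty$ $P$-a.s. With these identifications in place, the corollary is an immediate transcription of Theorem~\ref{thm9_protter}.
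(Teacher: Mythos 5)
Your proposal is correct and follows essentially the same route as the paper: identify $L=\mathcal{E}(M)$ with $M$ the sum of the two compensated integrals, note that $(\ref{altre2})$ gives $\Delta M>-1$, compute the sharp brackets $\langle M^c,M^c\rangle_T=\int_0^T|\varphi(t)|^2\,\ud t$ and $\langle M^d,M^d\rangle_T=\int_0^T\int_{\mathbb{R}}|\psi(t,z)|^2\,\pi_{t}(\lambda\phi(\ud z))\,\ud t$, and observe that $(\ref{shimbo_3})$ is exactly the Protter--Shimbo condition $(\ref{protter_shimbo})$ for this $M$. No gaps.
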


In the sequel we refer to $(\ref{shimbo_3})$ as the Protter-Shimbo condition.

\begin{proof}
Let $M$ be given by
\begin{equation}\label{martingala_1}
\ud M_t=\varphi(t)\ud I_t + \int_{\mathbb{R}} \psi(t,z) \left[ m(\ud t,\ud z)-\pi_{t^-}(\lambda \phi(\ud z))\ud t  \right].
\end{equation}
If $(\ref{altre2})$ and $(\ref{shimbo_3})$ hold true, then $\ds \int_0^T\left|\varphi(t)\right|^2\ud t < \infty $ and $\int_0^T|\psi(t,z)|^2\pi_{t}(\lambda \phi(\ud z))\ud t<\infty$ $P-a.s.$ and $M$ is a $(P,\mathcal{F}^Y)$-locally square integrable martingale such that $\Delta M_t>-1$ $P-a.s.$ $\forall t \in [0,T]$, having sharp brackets
\begin{equation*}
\langle M^c,M^c\rangle_T=\int_0^T|\varphi(t)|^2 \ud t  \quad \textrm{and} \quad \langle M^d, M^d \rangle_T = \int_0^T\int_{\mathbb{R}}|\psi(t,z)|^2\pi_{t}(\lambda \phi(\ud z))\ud t.
\end{equation*}
Then the hypothesis $(\ref{shimbo_3})$ translates exactly $(\ref{protter_shimbo})$ in the jump-diffusion case. Therefore, by Theorem $\ref{thm9_protter}$ we get the claimed result.

\end{proof}

\begin{rem}
By applying the Cauchy-Schwarz inequality, we are able to split the assumption $(\ref{shimbo_3})$ in two separated sufficient conditions on the continuous part and on the purely discontinuous part of the martingale $M$ written in $(\ref{martingala_1})$. Indeed, since
\begin{equation*}
\mathbb{E}\left[e^{ \int_0^T \left(\frac{1}{2} |\varphi(t)|^2 +\int_{\mathbb{R}}|\psi(t,z)|^2 \pi_t(\lambda \phi(\ud z))\right)\ud t }\right]\leq \mathbb{E}\left[e^{ \int_0^T |\varphi(t)|^2 \ud t }\right]^{\frac{1}{2}}\mathbb{E}\left[e^{  2\int_0^T\int_{\mathbb{R}}|\psi(t,z)|^2\pi_t(\lambda \phi(\ud z))\ud t}\right]^{\frac{1}{2}},
\end{equation*}
it can be clearly deduced that, if

$$ \mathbb{E}\left[e^{ \int_0^T |\varphi(t)|^2 \ud t }\right]<\infty, \quad  \mathbb{E}\left[e^{  2\int_0^T\int_{\mathbb{R}}|\psi(t,z)|^2\pi_t(\lambda \phi (\ud z))\ud t}\right]<\infty$$

then $(\ref{shimbo_3})$ is fulfilled.

\end{rem}


\section{The Zakai equation} \label{zak}

We assume that Assumption $\ref{hp_ks}$ and $(\ref{hp_deboli1})$ hold, and, in order to perform a suitable Girsanov change measure on $(\Omega, \mathcal{F}^Y_T)$, we make the following additional hypothesis.

\medskip
\begin{ass}\label{assumption_eta}
Assume that there exists a transition function $\eta(t,y,\ud z)$ such that the $\mathcal{F}^Y_t$-predictable measure $\eta(t,Y_{t^-},\ud z)$ is  equivalent to $\pi_{t^-}(\lambda \phi(\ud z))$ and
\begin{equation}\label{integrabilita_eta}
\mathbb{E}   \left[     \int_0^T\eta(t,Y_{t^-},\mathbb{R})\ud t     \right]     <\infty.
\end{equation}
\end{ass}

This means that there exists an $\mathcal{F}^Y_t$-predicatble process $\Psi(t,z)$ such that
\begin{equation}\label{defn_eta}
\pi_{t^-}(\lambda\phi(\ud z))\ud t=(1+\Psi(t,z))\eta(t,Y_{t^-},\ud z)\quad \textrm{and} \quad 1+\Psi(t,z)>0 \quad P-a.s.
\end{equation}

\medskip
Now, we want to introduce a probability measure denoted by $P_0$, defined on $(\Omega, \mathcal{F}^Y_T)$, which is equivalent to the restriction of $P$ over $\mathcal{F}^Y_T$, given by
\begin{equation}\label{p0}
\left.\frac{\ud P_0}{\ud P}\right|_{\mathcal{F}^Y_t}=Z_t=Z_t^0 Z^1_t
\end{equation}
where the processes $Z^0$ and $Z^1$ are described by the following dynamics

\begin{eqnarray}
 Z^0_t\!\!\!\!&=&\!\!\!\!\mathcal{E}\left\{-\int_0^t \pi_s\left(\frac{b_1}{\sigma_1}\right)\ud I_s\right\},\label{num2}\\
 Z^1_t\!\!\!\!&=&\!\!\!\!\mathcal{E}\left(\int_0^t\int_{\mathbb{R}}\left\{\frac{1}{1+\Psi(t,z)}-1\right\}(m(\ud t,\ud z)-\pi_{t^-}(\lambda \phi(\ud z))\ud t)\right).\label{num3}
\end{eqnarray}

As usual $\mathcal{E}$ denotes the Dol\'{e}ans-Dade exponential.  Let us observe that $Z$ is a strictly positive $(P,\mathcal{F}^Y_t)$-local martingale, nevertheless, if we want to define the probability measure $P_0$
via the equation $(\ref{p0})$ we make the following requirement.
\begin{ass}\label{martingala_Z}

Assume that $Z$ is a $(P,\mathcal{F}^Y_t)$-martingale.
\end{ass}

\begin{rem}
 Setting $\ds U(t,z)=\frac{1}{1+\Psi(t,z)}-1$, by Theorem $\ref{thm9_protter}$, a sufficient condition, which implies that Assumption $\ref{martingala_Z}$ is fulfilled, is given by

\begin{eqnarray}\label{shimbo1}
\mathbb{E}\left[ \! \exp\left \{\frac{1}{2}\int_0^T \!\! \pi_s^2\left(\frac{b_1}{\sigma_1}\right)\ud s+\int_0^T \!\!\! \int_{\mathbb{R}}U^2(s,z)\pi_s(\lambda \phi(\ud z))\ud s\right\}\right]<\infty.
\end{eqnarray}
Moreover, let us observe that if the following conditions hold
\begin{gather*}
\left|  \pi_t\left(  \frac{b_1}{\sigma_1} \right)   \right|  \leq  C_1,   \quad     |U(t,z)|\leq C_2 \quad P-a.s. \quad \forall t\in[0,T] \quad \forall z\in \mathbb{R},\\
 \quad \int_0^T \pi_t(\lambda      ) \; \ud t \leq C_3 \quad P-a.s.,
\end{gather*}
for $C_i,  i=1,2,3$ positive constants, then $(\ref{shimbo1})$ is verified. Clearly, it is sufficient that the ratio $\ds \frac{b_1(t)}{\sigma_1(t)}$ as well as the $(P,\mathcal{F}_t)$-intensity, $\lambda_t$, of the point process $N_t$, and $U(t,z)$ are $P-a.s.$-bounded processes to make these conditions true.


\end{rem}

As a consequence of the Girsanov Theorem,

\begin{equation}\label{browniano}
\widetilde{W}^1_t:= I_t + \int_0^t \pi_{s}\left(\frac{b_1}{\sigma_1}\right)\ud s
\end{equation}
is a $(P_0,\mathcal{F}^Y_t)$-Brownian motion and the $(P_0, \mathcal{F}^Y_t)$-predictable projection of the integer counting measure $m(\ud t, \ud z)$ is given by
\begin{equation}
 \eta(t, Y_{t^-}, \ud z)\ud t.
 \end{equation}
 In the sequel we will write $\eta_t(\ud z)$ for the measure $\eta(t,Y_{t^-}, \ud z)$, unless it is necessary to underline the dependence on the process.

 \medskip

 Note that, in particular, $N$ turns to be a $(P_0, \mathcal{F}^Y_t)$-non explosive point process, with intensity $\eta_t(\mathbb{R})$.

\medskip

We denote by $\vr$ the unnormalized filter associated with the measure $P_0$, defined by
\begin{equation}\label{defn_rho}
\vr_t(\ud x)={Z_t}^{-1} \pi_t(\ud x).
\end{equation}
The process $\vr$ is a finite measure valued \cadlag  \hskip 1mm    process. In particular,

\begin{equation}\label{rho1}
 \vr_t(1) := {Z_t}^{-1} =\left.\frac{\ud P}{\ud P_0}\right|_{\mathcal{F}^Y_t},
\end{equation}

whose dynamics, written in the next proposition, can be easily obtained by considering the effects of the Girsanov change of measure on the processes involved.

\begin{proposition}

Under the Assumption $\ref{martingala_Z}$, the process $\vr(1)$ solves

 \begin{equation}\label{din_rho1}
 \ud \vr_t(1)=\vr_{t^-}(1)\left\{\pi_t\left(\frac{b_1}{\sigma_1}\right)\ud \widetilde{W}^1_t  +    \int_{\mathbb{R}}\Psi(t,z) \left[m(\ud t, \ud z)-\eta_t ( \ud z)\ud t\right]\right\}.
 \end{equation}

\end{proposition}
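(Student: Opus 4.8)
The plan is to read off the dynamics of $\vr_t(1)=Z_t^{-1}$ by applying It\^o's formula to the map $z\mapsto z^{-1}$ along the semimartingale $Z=Z^0Z^1$ described in $(\ref{num2})$--$(\ref{num3})$. Set $U(t,z):=\frac{1}{1+\Psi(t,z)}-1$, so that $Z^1=\mathcal{E}(M^1)$ with $M^1_t=\int_0^t\int_{\mathbb R}U(s,z)\,(m(\ud s,\ud z)-\pi_{s^-}(\lambda\phi(\ud z))\,\ud s)$, and $Z^0=\mathcal{E}(M^0)$ with $M^0_t=-\int_0^t\pi_s(b_1/\sigma_1)\,\ud I_s$. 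Since $M^0$ is continuous while $M^1$ has vanishing continuous martingale part and the two have no common jumps, $[M^0,M^1]\equiv 0$; hence by integration by parts $\ud Z_t=\ud(Z^0_tZ^1_t)=Z_{t^-}\,\ud(M^0_t+M^1_t)$, i.e. $Z=\mathcal{E}(M^0+M^1)$. By $(\ref{defn_eta})$ one has $1+U(t,z)=(1+\Psi(t,z))^{-1}>0$, so $Z>0$ and $Z^{-1}$ is a well-defined positive semimartingale.

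Next I would expand $Z_t^{-1}$ by It\^o's formula. The first-order term gives $-\int_0^t Z_{s^-}^{-1}\,\ud(M^0_s+M^1_s)$; the second-order term, with $\ud\langle Z^c\rangle_s=Z_{s^-}^2\pi_s^2(b_1/\sigma_1)\,\ud s$, gives $\int_0^t Z_{s^-}^{-1}\pi_s^2(b_1/\sigma_1)\,\ud s$; and the jump term gives $\sum_{s\le t}\big(Z_s^{-1}-Z_{s^-}^{-1}+Z_{s^-}^{-2}\Delta Z_s\big)$. At a jump time $s$ one has $\Delta Z_s=Z_{s^-}U(s,\Delta Y_s)$ and $Z_s=Z_{s^-}(1+U(s,\Delta Y_s))$, so $Z_s^{-1}-Z_{s^-}^{-1}=Z_{s^-}^{-1}\Psi(s,\Delta Y_s)$ and the jump term equals $\int_0^t\int_{\mathbb R}Z_{s^-}^{-1}\big(\Psi(s,z)+U(s,z)\big)\,m(\ud s,\ud z)$. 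Collecting all $m(\ud s,\ud z)$-integrals, the net integrand against $m$ is $-U+(\Psi+U)=\Psi$, and there remains the compensator contribution $\int_0^t\int_{\mathbb R}Z_{s^-}^{-1}U(s,z)\,\pi_{s^-}(\lambda\phi(\ud z))\,\ud s$.

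The remaining step is purely algebraic. From $(\ref{defn_eta})$, $\pi_{s^-}(\lambda\phi(\ud z))\,\ud s=(1+\Psi(s,z))\,\eta_s(\ud z)\,\ud s$, and since $U(1+\Psi)=1-(1+\Psi)=-\Psi$, the surviving compensator term equals $-\int_0^t\int_{\mathbb R}Z_{s^-}^{-1}\Psi(s,z)\,\eta_s(\ud z)\,\ud s$. Thus, recalling $\vr_{t^-}(1)=Z_{t^-}^{-1}$ from $(\ref{rho1})$, the jump part of $\ud\vr_t(1)$ is exactly $\vr_{t^-}(1)\int_{\mathbb R}\Psi(t,z)\big(m(\ud t,\ud z)-\eta_t(\ud z)\,\ud t\big)$. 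For the continuous part, the $\ud I_s$-integral and the Lebesgue integral combine, using $\widetilde W^1_t=I_t+\int_0^t\pi_s(b_1/\sigma_1)\,\ud s$ from $(\ref{browniano})$, into $\int_0^t\vr_{s^-}(1)\pi_s(b_1/\sigma_1)\,\ud\widetilde W^1_s$. This yields $(\ref{din_rho1})$. That the two stochastic integrals are well defined (and are at least $(P,\F^Y_t)$-local martingales, so the identity is meaningful) follows from local boundedness of the \cadlag\ process $\vr_{t^-}(1)$ together with $(\ref{hp_deboli1})$, Assumption $\ref{hp_ks}$, and the bound $\int_0^T\int_{\mathbb R}|\Psi(t,z)|\,\eta_t(\ud z)\,\ud t\le \int_0^T\pi_{t^-}(\lambda)\,\ud t+\int_0^T\eta_t(\mathbb R)\,\ud t<\infty$, finite by Assumption $\ref{assumption_eta}$ and the integrability of $\lambda$.

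I expect the only delicate point to be the bookkeeping of the jump contributions: verifying that, after compensation, the various $m$- and Lebesgue-corrections collapse into the single $P_0$-martingale measure $m(\ud t,\ud z)-\eta_t(\ud z)\,\ud t$ integrated against $\Psi$, which hinges entirely on the two identities $1+U=(1+\Psi)^{-1}$ and $U(1+\Psi)=-\Psi$; the rest (the exponential identity for $Z$, It\^o on $z\mapsto z^{-1}$, recombining the Brownian terms through $(\ref{browniano})$) is routine. Equivalently, one could argue structurally: $\vr_t(1)=\ud P/\ud P_0|_{\F^Y_t}$ is the reciprocal density, so by the Girsanov Theorem applied from $P_0$ back to $P$ it must be the Dol\'eans-Dade exponential of a $(P_0,\F^Y_t)$-local martingale driven by the $P_0$-Brownian motion $\widetilde W^1$ and by the $P_0$-compensated measure $m(\ud t,\ud z)-\eta_t(\ud z)\,\ud t$, with drift and intensity corrections read off from $(\ref{browniano})$ and $(\ref{defn_eta})$; this gives $(\ref{din_rho1})$ directly, the computation above being the verification.
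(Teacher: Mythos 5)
Your argument is correct and follows the same route the paper intends: the paper states that the dynamics is "easily obtained by considering the effects of the Girsanov change of measure," and identifies $\vr(1)$ as the Dol\'eans-Dade exponential of the $(P_0,\mathcal{F}^Y_t)$-martingale driven by $\widetilde W^1$ and $m(\ud t,\ud z)-\eta_t(\ud z)\ud t$, which is exactly your closing structural argument. Your explicit It\^o computation on $Z^{-1}$ (with the two identities $1+U=(1+\Psi)^{-1}$ and $U(1+\Psi)=-\Psi$ collapsing the compensator terms) is a correct and more detailed verification of the same fact.
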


\begin{rem}

In other terms $\vr(1)$ is the Dol\'{e}ans Dade exponential of the $(P_0,\mathcal{F}^Y_t)$-martingale
\begin{equation*}
\ud M_t=\pi_t\left(\frac{b_1}{\sigma_1}\right)\ud \widetilde{W}^1_t  +    \int_{\mathbb{R}}\ \Psi(t,z) \left[m(\ud t, \ud z)-\eta_t( \ud z)\ud t\right].
\end{equation*}

\end{rem}

We are now in the position  to derive a Zakai's type equation for the unnormalized filter.


\begin{theorem}[The Zakai Equation]\label{thm_zakai}
Under Assumption $\ref{assumption_eta}$ and Assumption $\ref{martingala_Z}$ , let $P_0$ be the probability measure defined by $(\ref{p0})$, $(\ref{num2})$, $(\ref{num3})$.
Assume the hypotheses of Theorem $\ref{EQ}$. If

\begin{equation}\label{hp_deboli2}
\int_0^T \left\{ \vr_t^2\left|\frac{b_1}{\sigma_1}\right|+\vr_t^2(\sigma_0) \right\} \ud t<\infty \quad P_0-a.s.,
\end{equation}

then, $\forall f \in \mathcal{C}_b^{1,2}([0,T]\times \mathbb{R})$, the unnormalized filter $\vr$ satisfies the equation

\begin{gather}
\begin{aligned}
&\ud \vr_t(f) =  \vr_t(L^Xf)\ud t + \left\{\frac{\vr_t(b_1 f)}{\sigma_1(t)}+\rho\;\;\vr_t\left(\sigma_0\frac{\partial f}{\partial x}\right)\right\} \ud \widetilde{W}^1_t+ \vspace{.7em} \\
&+ \int_{\mathbb{R}}\left[{\ud \vr_{t^-} (\lambda\phi f) \over  \ud \eta_t } (z) - \vr_{t^-}(f) +
  { \ud \vr_{t^-} (\overline{L} f) \over  \ud \eta_t } (z)\right]\left(m(\ud t,\ud z)-\eta_t(\ud z)\ud t\right).
\end{aligned}\label{unnorm.filter}
\end{gather}
\end{theorem}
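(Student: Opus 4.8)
The plan is to obtain the Zakai equation by combining the Kushner--Stratonovich equation of Theorem~\ref{EQ} with the dynamics of $\vr_t(1)=Z_t^{-1}$ given in Proposition (equation~(\ref{din_rho1})) via an integration-by-parts (product rule) argument, carried out under $P_0$. First I would fix $f\in\mathcal C_b^{1,2}([0,T]\times\R)$ and write, under $P$, the semimartingale decomposition of $\pi_t(f)$ from~(\ref{ks}), rewriting the stochastic integrators in terms of the $(P_0,\F^Y_t)$-objects: using~(\ref{browniano}) we have $\ud I_t = \ud\widetilde W^1_t - \pi_t(b_1/\sigma_1)\ud t$, and using~(\ref{defn_eta}) the $(P_0,\F^Y_t)$-compensated measure is $m(\ud t,\ud z)-\eta_t(\ud z)\ud t$, so that $m^\pi(\ud t,\ud z) = \bigl(m(\ud t,\ud z)-\eta_t(\ud z)\ud t\bigr) + \bigl(\eta_t(\ud z) - \pi_{t^-}(\lambda\phi(\ud z))\bigr)\ud t = \bigl(m(\ud t,\ud z)-\eta_t(\ud z)\ud t\bigr) - \Psi(t,z)\eta_t(\ud z)\ud t$. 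Substituting these in~(\ref{ks}) gives the $P_0$-semimartingale decomposition of $\pi_t(f)$, with an additional finite-variation drift produced by the compensator change.

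Next I would apply the integration-by-parts formula to the product $\vr_t(f) = Z_t^{-1}\,\pi_t(f) = \vr_t(1)\,\pi_t(f)$, using that $\vr_t(1)$ is, by the Remark following Proposition~(\ref{din_rho1}), the Dol\'eans--Dade exponential $\doleans M$ of the $(P_0,\F^Y_t)$-martingale $\ud M_t = \pi_t(b_1/\sigma_1)\ud\widetilde W^1_t + \int_\R \Psi(t,z)(m(\ud t,\ud z)-\eta_t(\ud z)\ud t)$. The product rule contributes three terms: $\vr_{t^-}(1)\,\ud\pi_t(f)$, $\pi_{t^-}(f)\,\ud\vr_t(1)$, and the quadratic covariation $\ud[\vr(1),\pi(f)]_t$, which splits into a continuous part $\vr_t(1)\pi_t(b_1/\sigma_1)\cdot h^\pi_t(f)\,\ud t$ and a pure-jump part coming from the common jumps of $m$. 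The key bookkeeping is that the finite-variation terms must collapse to exactly $\vr_t(L^Xf)\ud t$. Concretely: the continuous-covariation term $\vr_t(1)\pi_t(b_1/\sigma_1)h^\pi_t(f)\ud t$ must cancel against the drift $-\vr_t(1)\pi_t(b_1/\sigma_1)h^\pi_t(f)\ud t$ generated when $\ud I_t$ is re-expressed through $\ud\widetilde W^1_t$; and the $\Psi$-drift $-\vr_{t^-}(1)\int_\R w^\pi_s(f,z)\Psi(t,z)\eta_t(\ud z)\ud t$ (from rewriting $m^\pi$) must combine with the jump-covariation term and with the $w^\pi$-term evaluated against $\eta_t$ rather than $\pi_{t^-}(\lambda\phi)$, so that — after using the defining relation $\pi_{t^-}(\lambda\phi(\ud z)) = (1+\Psi(t,z))\eta_t(\ud z)$ and the Radon--Nikodym identities $\vr_{t^-}(\lambda\phi f) = \vr_{t^-}(1)\pi_{t^-}(\lambda\phi f)$, $\vr_{t^-}(\overline L f)=\vr_{t^-}(1)\pi_{t^-}(\overline L f)$ — all the $\Psi$-dependent pieces telescope away, leaving the clean kernel $\ud\vr_{t^-}(\lambda\phi f)/\ud\eta_t(z) - \vr_{t^-}(f) + \ud\vr_{t^-}(\overline L f)/\ud\eta_t(z)$ integrated against $m(\ud t,\ud z)-\eta_t(\ud z)\ud t$. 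The diffusive martingale term is obtained directly: $\vr_t(1)h^\pi_t(f)\ud\widetilde W^1_t$ with $h^\pi_t(f)$ from~(\ref{d2}) becomes $\{\vr_t(b_1f)/\sigma_1(t) + \rho\,\vr_t(\sigma_0\,\partial f/\partial x)\}\ud\widetilde W^1_t$ after multiplying through by $\vr_t(1)$ and using linearity of $\vr_t$ together with the cancellation of the $\pi_t(b_1)\pi_t(f)$ cross term against part of the covariation.

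The main obstacle I expect is the careful algebraic reconciliation of the jump terms: one has to be scrupulous about left-limits ($\pi_{t^-}$, $\vr_{t^-}$, $Z_{t^-}$), about the fact that $w^\pi_s(f,z)$ and $\Psi(t,z)$ are $\F^Y_t$-predictable so that the product $(1+\Psi)$-weighting interacts correctly with the Radon--Nikodym derivatives, and about verifying that the jump of $\vr(f)$ at a jump time of $m$ predicted by the final equation matches the jump computed from $\Delta\vr_t(f) = \vr_{t^-}(1)\Delta\pi_t(f) + \pi_{t^-}(f)\Delta\vr_t(1) + \Delta\vr_t(1)\Delta\pi_t(f)$. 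A secondary technical point is integrability: one must check that each stochastic integral appearing is at least a $(P_0,\F^Y_t)$-local martingale, which is where hypothesis~(\ref{hp_deboli2}) enters — it guarantees $\int_0^T\{\vr_t^2|b_1/\sigma_1| + \vr_t^2(\sigma_0)\}\ud t<\infty$ $P_0$-a.s. so that the $\ud\widetilde W^1$-integral is well defined, while Assumption~\ref{assumption_eta} and~(\ref{integrabilita_eta}) control the $m(\ud t,\ud z)-\eta_t(\ud z)\ud t$-integral, and Assumption~\ref{martingala_Z} legitimizes working under $P_0$ throughout. Once the finite-variation parts are shown to equal $\vr_t(L^Xf)\ud t$ and the martingale parts are identified, the proof is complete; no uniqueness claim is needed here since the statement only asserts that $\vr$ \emph{satisfies} equation~(\ref{unnorm.filter}).
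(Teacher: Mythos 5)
Your proposal is correct and follows essentially the same route as the paper: the paper's proof is exactly an It\^o product-rule computation on $\vr_t(f)=\vr_t(1)\pi_t(f)$, combining the Kushner--Stratonovich equation (\ref{ks}) with the dynamics (\ref{din_rho1}) of $\vr(1)$, re-expressing $\ud I_t$ through $\ud\widetilde W^1_t$ and $m^\pi$ through $m(\ud t,\ud z)-\eta_t(\ud z)\ud t$, and letting the $\Psi$-dependent drift and covariation terms cancel so that the jump kernel becomes the Radon--Nikodym derivatives with respect to $\eta_t$. Your bookkeeping of the continuous and jump covariation terms and of the integrability conditions matches what the paper does.
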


\begin{proof}

Since $\vr_t(f)=\pi_t(f)\vr_t(1)$, recalling that the filter $\pi_t(f)$ satisfies equation $(\ref{ks})$, by It\^o's formula we get

\begin{equation*}
\begin{split}
\ud\vr_{t}(f)= &\vr_{t}(1)\pi_t(L^X f)\ud t
           +  \vr_{t}(1)\left[\frac{\pi_t(b_1 f)-\pi_t(b_1)\pi_t(f)}{\sigma_1(t)}+\pi_t\left(\sigma_0 \frac{\partial f}{\partial x}\right) \rho \right]\left\{\ud \widetilde{W}^1_t-\pi_t\left(\frac{b_1}{\sigma_1}\right)\right\}\\
            & + \vr_{t^-}(1) \int_{\mathbb{R}}\left({\ud \pi_{t^-} (\lambda\phi f) \over  \ud \pi_{t^-} (\lambda \phi )} (z) - \pi_{t^-}(f) +
  { \ud \pi_{t^-} (\overline{L} f) \over  \ud \pi_{t^-} \left(\lambda \phi \right)} (z)\right)\left[m(\ud t,\ud z)- \pi_{t^-}(\lambda \phi (\ud z))\ud t\right]\\
           & + \vr_{t}(1)\pi_t(f)\frac{\pi_t(b_1)}{\sigma_1(t)} \ud \widetilde{W}^1_t + \vr_{t^-}(1)\pi_{t^-}(f)\int_{\mathbb{R}}\left(\frac{\ud \pi_{t^-}(\lambda \phi)}{\ud \eta_t}(z)-1\right)(m(\ud t,\ud z)-\eta_t( \ud z))\ud t\\
           & + \left\{\frac{1}{\sigma_1(t)}(\pi_t(b_1 f)-\pi_t( f)\pi_t(b_1))+\rho \pi_t\left(\sigma_0\frac{\partial f}{\partial x}\right)\right\}\vr_t(1)\frac{\pi_t(b_1)}{\sigma_1(t)}\ud t\\
           &+\int_{\mathbb{R}}\vr_{t^-}(1)\left(\frac{\ud \pi_{t^-}(\lambda \phi)}{\ud \eta_t}(z)-1\right)\left({\ud \pi_{t^-} (\lambda\phi f) \over  \ud \pi_{t^-} (\lambda \phi )} (z) - \pi_{t^-}(f) +
  { \ud \pi_{t^-} (\overline{L} f) \over  \ud \pi_{t^-} \left(\lambda \phi \right)} (z)\right) m(\ud t,\ud z)\\
           = & \vr_t(L^Xf)\ud t + \left\{\frac{\vr_t(b_1 f)}{\sigma_1(t)}+\rho\;\;\vr_t\left(\sigma_0\frac{\partial f}{\partial x}\right)\right\} \ud \widetilde{W}^1_t\\
           & + \int_{\mathbb{R}}\vr_{t^-}(1)\left[{\ud \pi_{t^-} (\lambda\phi f) \over  \ud \eta_t } (z) - \pi_{t^-}(f) +
  { \ud \pi_{t^-} (\overline{L} f) \over  \ud  \eta_t} (z)\right]\left(m(\ud t,\ud z)-\eta_t( \ud z)\ud t\right),
\end{split}
\end{equation*}
 which is equivalent to (\ref {unnorm.filter}).
\end{proof}

\begin{rem}
Let $\psi_t$ be a $\F_t$-progressively measurable process satisfying the inequality $\mathbb{E}  \int_0^T|\psi_t|  \ud t < \infty$, then

\begin{equation}\label{conti}
  \mathbb{E}^{P_0}  \left[ \int_0^T  \vr_t |\psi|  \ud t \right] = \int_0^T   \mathbb{E} \left[  Z_t \vr_t |\psi|  \right] \ud t  =
 \mathbb{E}  \int_0^T \pi_t |\psi|  \ud t =  \mathbb{E}  \int_0^T |\psi_t|  \ud t < \infty
  \end{equation}
 where the last equality follows by $(\ref{vecchia})$.
Hence, taking into account Assumption  $\ref{hp_ks}$, $(\ref{integrabilita_eta})$ $(\ref{hp_deboli2})$ and $(\ref{conti})$ we obtain
\begin{equation*}
\int_0^T\left\{ \vr_t\left(\frac{b_1}{\sigma_1} f\right)+\rho\; \vr_t\!\!  \left(    \sigma_0     \frac{\partial f}{\partial x}     \right)  \right\}^2\ud t \leq C_f \int_0^T \left \{  \vr^2_t \left|\frac{b_1}{\sigma_1}\right| +  \vr^2_t(\sigma_0 )  \right \} \ud t<\infty  \quad P_0-a.s.
\end{equation*}

\begin{equation*}
\int_0^T\   \vr_t | L^X f| \ud t \leq \widetilde {C}_f  \int_0^T\   \vr_t ( 1 + |b_0| + |\sigma_0|^2 + \nu(\lambda^0) )\ud t < \infty \quad P_0-a.s.
\end{equation*}

\begin{gather*}
\begin{aligned}[t]
&\int_{\mathbb{R}} \left | {\ud \vr_{t^-} (\lambda\phi f) \over  \ud \eta_t } (z) - \vr_{t^-}(f) +
  { \ud \vr_{t^-} (\overline{L} f) \over  \ud  \eta_t} (z)\right | \eta_t( \ud z)\ud t \leq {} \\
&\int_0^T \{ \vr_{t}|\lambda f| + \eta_t(\mathbb{R})\vr_{t}|f| +\vr_{t}( \overline{L} f )(\mathbb{R}) \} \ud t \leq  \hat {C}_f \|f\|\int_0^T ( \vr_t(1)\eta_t(\mathbb{R}) + \vr_t(\lambda))\ud t<\infty \quad P_0-a.s.
\end{aligned}
\end{gather*}
for some suitable positive constants $C_f,\widetilde{C}_f , \hat {C}_f$.
 As a consequence all terms in equation $(\ref{unnorm.filter})$ are well defined and the integrals with respect to the  $(P_0, \mathcal{F}^Y_t)$-Brownian motion $\widetilde{W}^1_t$ and to the $(P_0, \mathcal{F}^Y_t)$-compensated martingale measure $m(\ud t,\ud z)-\eta_t( \ud z)\ud t $, are $(P_0, \mathcal{F}^Y_t)$-local martingales.
  \end{rem}
The Zakai equation (\ref{unnorm.filter}) has a natural recursive structure and it is linear between two consecutive jump times. In fact, let $\{T_n\}_{n\in\mathbb{N}}$ be the increasing sequence of jump times of the observation process  then for $t\in [T_n, T_{n+1})$, and $t \leq T$, $\vr$ verifies

\begin{equation*}
\begin{split}
\vr_{t}(f)-\vr_{T_n}(f)=& \int_{T_n}^t\vr_s(L^X f)\ud s + \int_{T_n}^t\left(\frac{1}{\sigma_1(s)}\vr_s(b_1 f)+\rho \vr_s\left(\sigma_0 \frac{\partial f}{\partial x}\right)\right)\ud \widetilde{W}^1_s \\
                       &- \int_{T_n}^t\int_{\mathbb{R}}\left[\frac{\ud \vr_{s^-}(\lambda \phi f)}{\ud \eta_s}(z)-\vr_{s^-}(f) + \frac{\ud \vr_{s^-}(\overline{L}f)}{\ud \eta_s}(z)\right]\eta_s(\ud z)\ud s\\
                        =&\int_{T_n}^t\vr_s(L^X f)\ud s + \int_{T_n}^t\left(\vr_s\left(\frac{b_1}{\sigma_1} f\right)+\rho \vr_s\left(\sigma_0 \frac{\partial f}{\partial x}\right)\right)\ud \widetilde{W}^1_s \\
                        &- \int_{T_n}^t \left\{\vr_{s}(\lambda f)- \vr_{s}(f)\eta_{s}(\mathbb{R})+\vr_{s}(\widetilde{L}f)\right\}\ud s
\end{split}
\end{equation*}

 where $\ds \widetilde{L}_tf:=\overline{L}_tf(\mathbb{R})$ and $\bar{L}_tf(\ud z)$ is defined by $(\ref{operatoreL})$.
Moreover, if $t$ is a jump time, i.e. $t=T_n$,

\begin{equation*}
\vr_{T_n}(f)-\vr_{T_n^-}(f)= \frac{\ud \vr_{T_n^-}(\lambda \phi f)}{\ud \eta_{T_n}}(Z_n) - \vr_{T_n^-}(f)+\frac{\ud \vr_{T_n^-}(\overline{L}f)}{\ud \eta_{T_n}}(Z_n)
\end{equation*}

where $Z_n=Y_{T_n}-Y_{T_{n^-}}$.
Hence $\vr_{T_n} (f) $ is completely determined by the observed data $(T_n, Z_n)$ and by the knowledge of $\vr_t(f)$
  for all $t\in[T_{n-1}, T_n)$, since $\vr_{T_n^-} (f) = \lim_{t \to  T_n^-} \vr_t(f)$.
\medskip

 Finally, the Zakai equation (\ref{unnorm.filter}) can also be written in the following way
\begin{equation}\label{zakai_alternativa}
\begin{split}
\ud\vr_{t}(f)= & \left\{  \vr_t(L_0^Xf)-\vr_t(\lambda f)+\eta_t(\mathbb{R})\vr_t(f)  \right\}\ud t + \left\{\frac{\vr_t(b_1 f)}{\sigma_1(t)}+\rho\;\;\vr_t\left(\sigma_0\frac{\partial f}{\partial x}\right)\right\} \ud \widetilde{W}^1_t\\
           & + \int_{\mathbb{R}}\left\{\frac{\ud \vr_{t^-}(\lambda \phi f)}{\ud \eta_t}(z) - \vr_{t^-}(f) + \frac{\vr_{t^-} (\overline{L} f)}{\ud \eta_t}(z)\right\} m(\ud t,\ud z),
\end{split}
\end{equation}

where $L^X_0$ represents the operator
 \begin{equation}\label{L_0}
 \begin{split}
 L_0^X f(t,x,y) &= L^X f(t,x) - \bar L f(t,x,y, \mathbb{R})\\
                &={\partial f \over \partial t}  + b_0(t,x) {\partial f \over \partial x} + {1\over 2} \sigma_0^2(t,x) {\partial^2 f \over \partial x^2} +\int_{\{\zeta\in Z: K_1(t,x,y;\zeta)=0\}}\!\!\!\!\!\!\!\!\!\!\![f(t,x+K_0(t,\zeta))-f(t,x)]\nu(\ud \zeta).
 \end{split}
 \end{equation}


\begin{rem}
Similarly also the Kushner-Stratonovich equation solved by the filter $\pi$ has an equivalent expression in terms of the operator $L^X_0$, given by

\begin{gather*}
\begin{aligned}[t]
& \ud \pi_t (f) = \{ \pi_t(L_0^X f)  + \pi_t(f) \pi_t(\lambda)  - \pi_t( \lambda f)  \}    \ud t + \left\{  \frac{    \pi_t(b_1 f) -   \pi_t(f)   \pi_t(b_1)}{\sigma_1(t)}    +    \rho \pi_t   \left(    \sigma_0     \frac{\partial f}{\partial x}     \right)  \right\}   \ud I_t  +\\
&\qquad \qquad +   \int_{\mathbb{R}} \left\{ \frac{\ud \pi_{t^-}(\lambda \phi f)}{\ud \pi_{t^-}(\lambda \phi)}(z)- \pi_{t^-}(f)+ \frac{\ud \pi_{t^-}(\bar{L}f)}{\ud \pi_{t^-}(\lambda \phi)}(z) \right\} m(\ud t, \ud z).
\end{aligned}
\end{gather*}

\end{rem}
\bigskip

In the last part of this section  we discuss the analogies between our approach and the classical one based on the Kallianpur-Striebel formula. To be more precise, in the classical reference probability measure method the probability measure $P_0$ is  the restriction over $\mathcal{F}^Y_T$ of a probability measure $\widetilde{P}_0$ equivalent to $\widetilde{P}$ on the whole filtration $\mathcal{F}_T$. Below we give sufficient conditions under which the classical approach can be applied to our model.

\begin{proposition}\label{classico}
Assume that there exists a transition function $\eta(t,y, \ud z)$ such that the $\mathcal{F}^Y_t$-predictable measure $\eta(t, Y_{t^-}, \ud z)$ is equivalent to $\lambda_t\phi_t(\ud z)$  and
$$
\mathbb{E}  \left[   \int_0^T\eta(t,Y_{t^-},\mathbb{R}) \ud t    \right]<\infty.
$$


Let Assumption $\ref{hp_ks}$ prevail and assume $(\ref{hp_forte1})$.
Define the process
\begin{equation}
\widetilde{Z}_t:=\mathcal{E}\left(-\int_0^t \frac{b_1(s)}{\sigma_1(s)} \ud W^1_s + \int_0^t\int_{\mathbb{R}}\left(\frac{\ud \eta_s}{\ud \lambda_s \phi_s}(z)-1\right)     \left( m(\ud s,\ud z)- \lambda_s\phi_s(\ud z)\ud s\right)  \right)
\end{equation}

and assume that $\widetilde{Z}$ is a strictly positive $(P,\mathcal{F}_t)$-martingale.
Let $\widetilde{P}_0$ be the probability measure defined by $\ds \left.\frac{\ud \widetilde{P}_0}{\ud P}\right|_{\mathcal{F}_t}=\widetilde{Z}_t.$ Then the probability measure $P_0$ coincides with  the restriction of $\widetilde{P}_0 $ over $\mathcal{F}^Y_T$. 
\end{proposition}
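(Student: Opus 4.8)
The plan is to show that the two Radon--Nikodym densities $\widehat L_T Z^1_T$ (which defines $P_0$ on $\mathcal F^Y_T$ via \eqref{p0}) and $\mathbb E[\widetilde Z_T\mid\mathcal F^Y_T]$ coincide, since the restriction of $\widetilde P_0$ to $\mathcal F^Y_T$ has density $\mathbb E[\widetilde Z_T\mid\mathcal F^Y_T]$ with respect to $P|_{\mathcal F^Y_T}$. First I would observe that the transition function $\eta$ in the hypothesis is, under Assumption \ref{hp_ks} and Proposition \ref{proiezione_predicibile}, the same object appearing in Assumption \ref{assumption_eta}: indeed $\lambda_t\phi_t(\ud z)$ is the $(P,\mathcal F_t)$-local characteristic of $m$, and its $(P,\mathcal F^Y_t)$-predictable projection is $\pi_{t^-}(\lambda\phi(\ud z))$; since $\eta(t,Y_{t^-},\ud z)$ is assumed equivalent to $\lambda_t\phi_t(\ud z)$ and is $\mathcal F^Y_t$-predictable, a standard projection argument (using that $\mathbb E[\int_0^T\int_{\mathbb R}H(t,z)\,m(\ud t,\ud z)]=\mathbb E[\int_0^T\int_{\mathbb R}H(t,z)\,\lambda_t\phi_t(\ud z)\,\ud t]$ for $\mathcal F_t$-predictable $H$, together with the tower property) shows it is also equivalent to $\pi_{t^-}(\lambda\phi(\ud z))$; hence $\Psi$ in \eqref{defn_eta} and the density $\ud\eta_s/\ud(\lambda_s\phi_s)$ are consistently defined, and $1+\Psi(t,z)=\bigl(\ud\eta_t/\ud(\lambda_t\phi_t)\bigr)(z)^{-1}\cdot(\text{the }\mathcal F^Y\text{-projection factor})$. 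The integrability condition \eqref{integrabilita_eta} then also holds, so Assumptions \ref{assumption_eta} is in force; Assumption \ref{martingala_Z} will follow from the martingale property of $\widetilde Z$ via the filtering identity $\mathbb E[\widetilde Z_t\mid\mathcal F^Y_t]=Z_t$ which is exactly what we are about to prove, combined with the fact that conditional expectations of martingales are martingales.

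Next I would compute $\mathbb E[\widetilde Z_t\mid\mathcal F^Y_t]$ by the same device used in the Remark following \eqref{MG1} and in the proof of Theorem \ref{EQ}. Write $\widetilde Z_t=\widetilde Z^0_t\widetilde Z^1_t$ where $\widetilde Z^0_t=\mathcal E\{-\int_0^t\frac{b_1(s)}{\sigma_1(s)}\ud W^1_s\}=L_t$ and $\widetilde Z^1_t=\mathcal E\bigl(\int_0^t\int_{\mathbb R}(\frac{\ud\eta_s}{\ud\lambda_s\phi_s}(z)-1)(m(\ud s,\ud z)-\lambda_s\phi_s(\ud z)\ud s)\bigr)$. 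By the Girsanov theorem $\widetilde P_0$-dynamics: under $\widetilde P_0$, $\widetilde W^1$ of \eqref{girsanov} is a Brownian motion and the $(\widetilde P_0,\mathcal F_t)$-compensator of $m$ is $\eta_t(\ud z)\ud t$; taking $(\widetilde P_0,\mathcal F^Y_t)$-projections and using that $\eta$ is already $\mathcal F^Y_t$-predictable, one gets that $\widetilde W^1$ is a $(\widetilde P_0,\mathcal F^Y_t)$-Brownian motion and $\eta_t(\ud z)\ud t$ is the $(\widetilde P_0,\mathcal F^Y_t)$-compensator of $m$ as well. Then, as in \eqref{Qg}, $\mathbb E[\widetilde Z_t\mid\mathcal F^Y_t]=\ud\widetilde P_0/\ud P|_{\mathcal F^Y_t}$, and by the Girsanov theorem applied on $(\Omega,\mathcal F^Y_T)$ this density is precisely the Dol\'eans--Dade exponential driven by the $(P,\mathcal F^Y_t)$-martingale parts obtained from \eqref{girsanov} and from the compensator change $\lambda_t\phi_t\ud t\rightsquigarrow\pi_{t^-}(\lambda\phi)\ud t\rightsquigarrow\eta_t\ud t$; identifying the Brownian drift term with $-\pi_s(\frac{b_1}{\sigma_1})\ud I_s$ (using \eqref{wiener}) and the jump term with $\int(\frac{1}{1+\Psi}-1)(m-\pi_{t^-}(\lambda\phi))$ gives exactly $Z_t=Z^0_tZ^1_t$ of \eqref{num2}--\eqref{num3}.

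The main obstacle I expect is the careful handling of the two-stage projection of the jump compensator: one must pass from the $(P,\mathcal F_t)$-characteristic $\lambda_t\phi_t(\ud z)\ud t$ to the $(P,\mathcal F^Y_t)$-projection $\pi_{t^-}(\lambda\phi(\ud z))\ud t$ (Proposition \ref{proiezione_predicibile}) and then to $\eta_t(\ud z)\ud t$ under the measure change, verifying that the Radon--Nikodym cocycle factorizes correctly so that the density $\widetilde Z^1$ restricted/projected to $\mathcal F^Y_T$ equals $Z^1_T$ --- in particular checking that $\widetilde Z^1$ has no contribution to the $\mathcal F^Y$-density beyond what $Z^1$ already captures (which hinges on $\eta$ being $\mathcal F^Y_t$-predictable and hence ``invisible'' to the second projection). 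A clean way to organize this is to invoke uniqueness of the Girsanov density: both $Z_t$ and $\mathbb E[\widetilde Z_t\mid\mathcal F^Y_t]$ are strictly positive $(P,\mathcal F^Y_t)$-martingales with value $1$ at $t=0$ whose associated measure makes $\widetilde W^1$ a $(\cdot,\mathcal F^Y_t)$-Brownian motion and $\eta_t(\ud z)\ud t$ the $(\cdot,\mathcal F^Y_t)$-compensator of $m$; since $\mathcal F^Y_t=\mathcal F^m_t\vee\mathcal F^{\widetilde W^1}_t$ (shown in the proof of Theorem \ref{EQ}), such a density is unique, forcing $Z_t=\mathbb E[\widetilde Z_t\mid\mathcal F^Y_t]$ and hence $P_0=\widetilde P_0|_{\mathcal F^Y_T}$.
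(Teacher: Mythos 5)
Your proposal matches the paper's proof: the paper likewise sets $\widehat Z_t=\mathbb E[\widetilde Z_t\mid\mathcal F^Y_t]$, writes it as the Dol\'eans--Dade exponential of a $(P,\mathcal F^Y_t)$-martingale represented against $I$ and $m^\pi$ via Proposition 2.6 of \cite{cc2011}, and identifies the coefficients as $\pi_t\!\left(\frac{b_1}{\sigma_1}\right)$ for the continuous part and $\frac{\ud \eta_t}{\ud \pi_{t^-}(\lambda\phi)}(z)-1=\frac{1}{1+\Psi(t,z)}-1$ for the jump part, using exactly your observations that $\widetilde W^1$ is a $(\widetilde P_0,\mathcal F^Y_t)$-Brownian motion and that $\eta$, being already $\mathcal F^Y_t$-predictable, is the $(\widetilde P_0,\mathcal F^Y_t)$-compensator of $m$. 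The only divergence is your optional closing via ``uniqueness of the Girsanov density,'' which as stated would need a separate martingale-problem uniqueness argument; the direct coefficient identification you also give is what the paper uses and already suffices.
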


\begin{proof}

Let us define $\widehat{Z}_t:=\mathbb{E}\left[\widetilde{Z}_t|\mathcal{F}^Y_t\right]$. $\widehat{Z}_t$ is the Dol\'{e}ans-Dade exponential of a $(P,\mathcal{F}^Y_t)$-martingale $V$. From the Martingale Representation Theorem (Proposition 2.6 in \cite{cc2011}), we deduce the structure of the process $V$, which is
\begin{equation}
 V_t=V_0+\int_0^t h^V_s\ud I_s +\int_0^t \int_{\mathbb{R}} w^V(s,z) \Big(m(\ud s,\ud z)- \pi_{s^-}(\lambda \phi (\ud z))\ud s \Big).
\end{equation}

Now we need to identify these processes $h^V_t$ and $w^V(t,z)$.

By the Girsanov Theorem,  $\ds \widetilde{W}_t^1=W_t^1+\int_0^t\frac{b_1(s)}{\sigma_1(s)}\ud s$ is a $(\widetilde{P}_0,\mathcal{F}_t)$-Brownian motion, and the $(\widetilde{P}_0,\mathcal{F}_t)$-predictable projection of the integer valued measure $m(\ud t, \ud z)$ is $\eta_t(\ud z) \ud t$. Observe that, because of the structure of the innovation process $I$, the process $\widetilde{W}^1$ can also be written as $\ds \widetilde{W}_t^1=I_t+\int_0^t\pi_s\left(\frac{b_1}{\sigma_1}\right)\ud s$, which implies that $\widetilde{W}^1$ is a $(\widetilde{P}_0,\mathcal{F}^Y_t)$- Brownian motion. So, concerning the continuous part, we have $\ds h^V_t=\pi_t\left(\frac{b_1}{\sigma_1}\right)$.

Note that the $(\widetilde{P}_0,\mathcal{F}^Y_t)$-dual predictable projection of $m(\ud t, \ud z)$ is $\eta(t,Y_{t^-},\ud z)$, since $\eta$ is already $\mathcal{F}^Y_t$-predictable; then there exists a $(\widetilde{P}_0,\mathcal{F}^Y_t)$-predictable process $\overline{\psi}(t,z)$ such that
$$
\eta(t, Y_{t^-},\ud z)\ud t=\left(1+\overline{\psi}(t,z)\right)\pi_{t^-}(\lambda \phi(\ud z))\ud t
$$
or, equivalently, $\ds \overline{\psi}(t,z)=\frac{\ud \eta_t}{\ud \pi_{t^-}(\lambda \phi )}(z)-1$, which means that  $w^V(t,z)=\overline{\psi}(t,z)$.

Then, the martingale $V$ assumes the form

\begin{equation*}
V_t=V_0+\int_0^t \pi_s\left(\frac{b_1}{\sigma_1}\right) \ud I_s  +\int_0^t \int_{\mathbb{R}} \left\{\frac{\ud \eta_s}{\ud \pi_{s^-}(\lambda \phi )}(z)-1\right\}\left[m(\ud s, \ud z)-\pi_{s^-}(\lambda \phi (\ud z))\ud s\right]
\end{equation*}

and $\widehat{Z}$ solves

\begin{equation}
\ud \widehat{Z}_t=\widehat{Z}_{t^-}\left(\pi_t\left(\frac{b_1}{\sigma_1}\right) \ud I_t  + \int_{\mathbb{R}} \left\{ \frac{ \ud \eta_t}{\ud \pi_{t^-}(\lambda\phi)}(z)-1 \right\}\left[m(\ud t, \ud z)-\pi_{t^-}(\lambda \phi (\ud z))\ud t\right]
\right).
\end{equation}
So $\widehat{Z}=Z$, which implies that $P_0$ is the restriction of the measure $\widetilde{P}_0$ over $\mathcal{F}^Y_T$.

\end{proof}

\begin{rem}
Under the assumption of Proposition $\ref{classico}$, there exists a measurable function $\widetilde \Psi (t,x,y,z)$ such that $\lambda_t\phi_t(\ud z)=\{1+\widetilde \Psi (t,X_{t^-}, Y_{t^-},z)\}\eta_t(\ud z)$. Hence by direct computations the Radon-Nikodym derivative $\ds \frac{\ud \vr_{t^-}(\lambda \phi f)}{\ud \eta_t}(z)$ that appears in $(\ref{zakai_alternativa})$ can be written as
\[
\frac{\ud \vr_{t^-}(\lambda \phi f)}{\ud \eta_t}(z)=\vr_{t^-}(f ) +\vr_{t^-}(f \widetilde \Psi (z)).
\]

\end{rem}


\section{Uniqueness}

In this section our intention is to show whether the Zakai equation uniquely characterizes the unnormalized filter. For doing so, we first prove the equivalence between pathwise uniqueness for the solution to the KS-equation and pathwise uniqueness for the solution to the Zakai one. Then we deduce pathwise uniqueness for the solution of the Zakai equation from uniqueness results for the KS-equation proved in \cite{cc2011}, applying the Filtered Martingale Problem approach.

\medskip

 Here we give the definition of strong solutions for the filtering equations, which is a slight modification of that given in \cite{cc2011}, since we replaced the condition $(\ref{hp_forte1})$ with $(\ref{hp_deboli1})$.

\begin{definition}
A strong solution to the Kushner-Stratonovich equation is an $\mathcal{F}^Y_t-$adapted c\`{a}dl\`{a}g $\mathcal{P}(\mathbb{R})-$valued process $\mu$ defined on the filtered probability space $\ds (\Omega, \{\mathcal{F}_t\}_{t\in[0,T]},P)$ such that
 \begin{equation}\label{integrabilita_b2}
 \int_0^T \left\{\mu_s(b_2)+\mu_s^2\left|\frac{b_1}{\sigma_1}\right|\right\} \ud s <\infty \quad P-a.s.
 \end{equation}
 where $\ds b_2(t,x) = |b_0(t,x)|+\sigma^2_0(t,x) + \lambda^0(t,x)+\lambda(t,x,Y_{t^-})$, and solving the Kushner-Stratonovich equation that is, $\forall f \in \mathcal{C}^{1,2,2}_b([0,T]\times\mathbb{R}\times\mathbb{R})$ and $\forall t\leq T$

\begin{equation}\label{mu}
\begin{split}
\ud \mu_t(f) =&  \left\{   \mu_t(L^X_0 f) - \mu_t(\lambda f) +  \mu_t(\lambda)   \mu_t(f)     \right\} \ud t   +    \left\{    \mu_t\left(\frac{b_1}{\sigma_1} f\right) -   \mu_t(f)   \mu_t\left(\frac{b_1}{\sigma_1}\right)  +    \rho \mu_t   \left(    \sigma_0     \frac{\partial f}{\partial x}        \right)  \right\}  \ud    I^{\mu}_t  \\
                     &+ \int_{\mathbb{R}} \left[\frac{\ud \mu_{t^-}(\lambda \phi f)}{\ud \mu_{t^-}(\lambda \phi)}(z) - \mu_{t^-}(f)   + \frac{\ud \mu_{t^-}(\overline{L}f)}{\ud \mu_{t^-}(\lambda \phi)}(z)\right] m(\ud t, \ud z),
\end{split}
\end{equation}

where $\ds \ud I^\mu_t=\ud W^1_t+\left\{\frac{b_1(t)}{\sigma_1(t)}-\mu_t\left(\frac{b_1}{\sigma_1}\right)\right\} \ud t$.
\end{definition}
\medskip

The condition $(\ref{integrabilita_b2})$ makes the integrals in $(\ref{mu})$ well defined, as the following estimations imply

\begin{align}
 \int_0^T \!\!\left\{     \mu_s\left(\frac{b_1}{\sigma_1} f\right) -   \mu_s(f)   \mu_s\left(\frac{b_1}{\sigma_1}\right)  +    \rho \mu_s   \left(    \sigma_0     \frac{\partial f}{\partial x}     \right)  \right\}^2\!\! \ud t \leq B_f\int_0^T\!\! \left\{\mu_t(b_2)+\mu^2_t\left|\frac{b_1}{\sigma_1}\right|\right\} \ud t< \infty \quad P-a.s. \label{stima2}
 \end{align}

\begin{align}
\int_0^T\left|\mu_t(L_0^Xf)- \mu_t(\lambda f) +  \mu_t(\lambda)   \mu_t(f) \right|\ud t \leq \widetilde{B}_f\int_0^T\left\{1+\left|\mu_t(b_0)\right|+\mu_t(\sigma^2_0)+\mu_t(\lambda^0)+\mu_t(\lambda)\right\} \ud t \nonumber\\
\leq \widetilde{B}_f\int_0^T\mu_t(b_2)\ud t<\infty \quad P-a.s.\label{stima3}
\end{align}
for some suitable positive constants $B_f$ and $\widetilde{B}_f$, and moreover the integrand with respect to the the measure $m(\ud t, \ud z)$ turns to be bounded by $4 \|f\|$.

\medskip
\begin{rem}
Observe that the process $\pi$ is a strong solution to the Kushner-Stratonovich equation since it solves the equation $(\ref{mu})$, where $I^\pi=I$, and the following estimate holds,
\begin{eqnarray}
\mathbb{E}\int_0^T   \left[    \pi_s(b_2)+\pi^2_s\left|\frac{b_1}{\sigma_1}\right|  \right] \ud s
=  \mathbb{E}\int_0^T   \left\{ b_2(t,X_t)+ \pi^2_s \left|\frac{b_1}{\sigma_1}\right|\right\} \ud s<\infty\label{stimab2}
\end{eqnarray}
because of  Assumption $\ref{hp_ks}$ and $(\ref{hp_deboli1})$.
\end{rem}


\medskip

According to the statement above, we give the definition for the Zakai equation.
Define $\mathcal{M}(\mathbb{R})$ the set of all nonnegative finite measures over $\mathbb{R}$.

\begin{definition}
A strong solution to the Zakai equation is a \cadlag, $\mathcal{F}^Y_t$-adapted process $\xi$ defined on the probability space $(\Omega, \{\mathcal{F}_t\}_{t\in [0,T]}, P_0)$ taking values in $\mathcal{M}(\mathbb{R})$, such that $\xi_{t^-}(\lambda \phi (\ud z))<<\eta(t,Y_{t^-},\ud z)$ and
 \begin{equation}\label{integrabilita_b2_zakai}
 \int_0^T\left\{\xi_s(\overline{b}_2)+\xi^2_s\left|\frac{b_1}{\sigma_1}\right|+\xi_s^2(\sigma_0)\right\}\ud s <\infty \quad P_0-a.s.
 \end{equation}
 where $\ds \overline{b}_2(t,x) = \eta_t(\mathbb{R})+ |b_0(t,x)|+\sigma^2_0(t,x) + \lambda^0(t,x)+ \lambda(t,x,Y_{t^-})$, and satisfying the Zakai equation, that is, $\forall f \in \mathcal{C}_b^{1,2}([0,T]\times \mathbb{R})$
\begin{equation}\label{num4}
\begin{split}
\ud \xi_t(f) =&   \left\{   \xi_t(L^X_0 f) - \xi_t(\lambda f) +  \eta_t(\mathbb{R}) \xi_t(f)     \right\}\ud t    +    \left\{  \frac{\xi_t(b_1 f)}{\sigma_1(t)} +\rho \hskip 0.5mm\xi_t   \left(    \sigma_0     \frac{\partial f}{\partial x}     \right)  \right\}  \ud    \widetilde{W}^1_t  \\
                      &  +  \int_{\mathbb{R}}\left\{ \frac{\ud \xi_{t^-}(\lambda \phi f)}{\ud \eta_t}(z) -\xi_{t^-}(f)+\frac{\ud \xi_{t^-}( \overline{L} f )}{\ud \eta_t}(z)   \right\} m(\ud t,\ud z).
\end{split}
\end{equation}

\end{definition}

Of course, taking into account the equivalent expression of (\ref{num4}) given by
 \begin{equation}\label{num5}
\begin{split}
\ud \xi_t(f) =&    \xi_t(L^X f) \ud t    +    \left\{  \frac{\xi_t(b_1 f)}{\sigma_1(t)} +\rho \hskip0.5mm \xi_t   \left(    \sigma_0     \frac{\partial f}{\partial x}     \right)  \right\}  \ud    \widetilde{W}^1_t  \\
                      &  +  \int_{\mathbb{R}}\left\{ \frac{\ud \xi_{t^-}(\lambda \phi f)}{\ud \eta_t}(z) -\xi_{t^-}(f)+\frac{\ud \xi_{t^-}( \overline{L} f )}{\ud \eta_t}(z)   \right\} \Big(m(\ud t,\ud z)-\eta_t(\ud z)\ud t\Big),
\end{split}
\end{equation}
 as before, the condition $(\ref{integrabilita_b2_zakai})$ makes the processes in the equation well defined and moreover the integrals with respect to $\widetilde{W}^1$ and the compensated measure $m(\ud t,\ud z)-\eta(t, Y_{t^-},\ud z)$ become $(P_0,\mathcal{F}^Y_t)$-martingales. Indeed,

\begin{gather*}
\int_0^T\left\{  \frac{1}{\sigma_1(t)} \xi_t(b_1 f)-\rho\; \xi_t \!\!  \left(    \sigma_0     \frac{\partial f}{\partial x}     \right)  \right\}^2\ud t \leq c_f \int_0^T \left\{\xi^2_t(\sigma_0) +\xi^2_t\left|\frac{b_1}{\sigma_1}\right|\right\} \ud t <\infty  \quad P_0-a.s.\\
\int_0^T\int_{\mathbb{R}}\left|\frac{ \ud \xi_{t}(\lambda \phi f)}{\ud \eta_t}(z) -\xi_{t^-}(f)+\frac{\ud \xi_{t}( \overline{L} f )}{\ud \eta_t}(z) \right|\eta_t(\ud z) \ud t \leq  \widetilde{c}_f \int_0^T \left\{ \xi_t(1)\eta_t(\mathbb{R}) +\xi_t(\lambda) \right\} \ud t<\infty \quad P_0-a.s.\\
\int_0^T \left| \xi_t(L^X f)\right|\ud t \leq \widehat{c}_f \int_0^T \xi_t(\overline{b}_2)\ud t <\infty \quad P_0-a.s.
\end{gather*}

for some suitable positive constants $c_f,\widetilde{c}_f$ and $\widehat{c}_f$.


\begin{rem}
Under the hypotheses of Theorem \ref{thm_zakai}, the unnormalized filter  $\vr$  is a strong solution to the Zakai equation. In fact it  solves $(\ref{zakai_alternativa})$, and  $(\ref{integrabilita_b2_zakai})$ comes from $(\ref{conti})$.

\end{rem}

\subsection{Equivalence of the filtering equation}

In this part of the section we prove two theorems. The first one shows that uniqueness for Zakai equation implies uniqueness for KS-equation, the second one, instead  provides the converse implication.

\begin{theorem}
Assume the hypotheses of Theorem $\ref{thm_zakai}$ and strong uniqueness for the solution to the Zakai equation. Let $\mu$ be a strong solution to Kushner-Stratonovich equation such that $\mu_{t^-}(\lambda \phi(\ud z))$ is equivalent to the measure $\pi_{t^-}(\lambda \phi(\ud z))$. Then $\mu_t=\pi_t$ $P-a.s.$ for all $t\in[0,T]$.
\end{theorem}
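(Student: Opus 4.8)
The plan is to transform the given strong solution $\mu$ of the KS-equation into a strong solution of the Zakai equation by exactly the same device used to pass from $\pi$ to $\vr$ in the proof of Theorem~\ref{thm_zakai}, and then to invoke the assumed strong uniqueness for the Zakai equation. First I would rewrite the equation~$(\ref{mu})$ satisfied by $\mu$ under the measure $P_0$ of $(\ref{p0})$. Since $P_0\sim P$ on $\mathcal F^Y_T$, $(\ref{mu})$ still holds $P_0$-a.s.; under $P_0$ the process $\widetilde W^1$ of $(\ref{browniano})$ is a Brownian motion and $m(\ud t,\ud z)$ has dual predictable projection $\eta_t(\ud z)\,\ud t$, while combining $(\ref{browniano})$, $(\ref{inn})$ and the definition of $I^\mu$ gives $\widetilde W^1_t=I^\mu_t+\int_0^t\mu_s(b_1/\sigma_1)\,\ud s$, i.e. $\ud I^\mu_t=\ud\widetilde W^1_t-\mu_t(b_1/\sigma_1)\,\ud t$. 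Because $\mu_{t^-}(\lambda\phi(\ud z))$ is equivalent to $\pi_{t^-}(\lambda\phi(\ud z))$, hence to $\eta_t(\ud z)$ by Assumption~\ref{assumption_eta}, there is an $\mathcal F^Y_t$-predictable $\Psi^\mu(t,z)$ with $\mu_{t^-}(\lambda\phi(\ud z))=(1+\Psi^\mu(t,z))\eta_t(\ud z)$ and $1+\Psi^\mu(t,z)=\tfrac{\ud\mu_{t^-}(\lambda\phi)}{\ud\eta_t}(z)>0$. I then set
\[
U^\mu_t:=\mathcal E\!\left(\int_0^t\mu_s\!\left(\tfrac{b_1}{\sigma_1}\right)\ud\widetilde W^1_s+\int_0^t\!\!\int_{\mathbb R}\Psi^\mu(s,z)\big(m(\ud s,\ud z)-\eta_s(\ud z)\,\ud s\big)\right),
\]
a strictly positive \cadlag\ $(P_0,\mathcal F^Y_t)$-local martingale (the analogue of $\vr(1)$ in $(\ref{rho1})$--$(\ref{din_rho1})$), and $\xi_t(\ud x):=U^\mu_t\,\mu_t(\ud x)$, so that $\xi_t(1)=U^\mu_t$.

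Next I would apply It\^o's product rule to $\xi_t(f)=U^\mu_t\mu_t(f)$ for $f\in\mathcal C_b^{1,2}([0,T]\times\mathbb R)$, repeating the computation in the proof of Theorem~\ref{thm_zakai} with $\pi$ replaced by $\mu$, $\vr(1)$ by $U^\mu$ and $\Psi$ by $\Psi^\mu$: the two appearances of $\mu_t(b_1/\sigma_1)$ — one from $\ud I^\mu$ in $(\ref{mu})$, the other from $\ud U^\mu$ — cancel against the continuous cross-variation term exactly as the $\pi_t(b_1/\sigma_1)$ terms do there, and the Radon--Nikodym manipulations (e.g. $U^\mu_{t^-}\tfrac{\ud\mu_{t^-}(\lambda\phi f)}{\ud\mu_{t^-}(\lambda\phi)}(z)\tfrac{\ud\mu_{t^-}(\lambda\phi)}{\ud\eta_t}(z)=\tfrac{\ud\xi_{t^-}(\lambda\phi f)}{\ud\eta_t}(z)$, and likewise for $\overline L f$) are licit because $\mu_{t^-}(\lambda\phi)\sim\eta_t$ and $f$ is bounded. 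This shows that $\xi$ satisfies the Zakai equation in the form $(\ref{num5})$ on $(\Omega,\{\mathcal F_t\}_{t\in[0,T]},P_0)$. It remains to check that $\xi$ is a \emph{strong} solution: $\xi_{t^-}(\lambda\phi(\ud z))=U^\mu_{t^-}\mu_{t^-}(\lambda\phi(\ud z))\ll\eta_t(\ud z)$, and since $U^\mu$ is \cadlag\ it is bounded on $[0,T]$ by a random constant $C_\omega<\infty$ $P_0$-a.s., so, using $\mu_s(\sigma_0)^2\le\mu_s(\sigma_0^2)\le\mu_s(b_2)$, $\overline b_2=\eta_t(\mathbb R)+b_2$, the bound $(\ref{integrabilita_b2})$ for $\mu$ and $(\ref{integrabilita_eta})$,
\[
\int_0^T\!\Big\{\xi_s(\overline b_2)+\xi_s^2\big|\tfrac{b_1}{\sigma_1}\big|+\xi_s^2(\sigma_0)\Big\}\ud s\ \le\ \big(C_\omega+C_\omega^2\big)\int_0^T\!\Big\{\eta_s(\mathbb R)+\mu_s(b_2)+\mu_s^2\big|\tfrac{b_1}{\sigma_1}\big|\Big\}\ud s\ <\ \infty\quad P_0\text{-a.s.},
\]
which is $(\ref{integrabilita_b2_zakai})$.

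By the assumed strong uniqueness for the Zakai equation and the fact that $\vr$ is itself a strong Zakai solution, $\xi_t=\vr_t$ $P_0$-a.s. for every $t\in[0,T]$. Taking $f\equiv 1$ gives $U^\mu_t=\vr_t(1)=Z_t^{-1}>0$, and therefore $\mu_t(f)=(U^\mu_t)^{-1}\xi_t(f)=Z_t\vr_t(f)=\pi_t(f)$ for all admissible $f$; hence $\mu_t=\pi_t$ $P_0$-a.s., and thus $P$-a.s. (as $P\sim P_0$ on $\mathcal F^Y_T$), for all $t\in[0,T]$.

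I expect the main obstacle to be the second step: verifying carefully that the It\^o computation for $\xi=U^\mu\mu$ reproduces $(\ref{num5})$ term by term — essentially re-deriving Theorem~\ref{thm_zakai} with $\mu$ in the role of $\pi$ — and that all the Radon--Nikodym derivatives occurring there remain well defined, which is exactly where the hypothesis $\mu_{t^-}(\lambda\phi)\sim\pi_{t^-}(\lambda\phi)$ is used. A minor point worth stating explicitly is that $U^\mu$ need only be a strictly positive \cadlag\ local martingale here, never a genuine martingale, since it is used solely as a multiplicative weight and never to change the probability measure.
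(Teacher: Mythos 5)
Your proposal is correct and follows essentially the same route as the paper: your $U^\mu$ is exactly the paper's process $\theta$ of $(\ref{theta})$, and the argument proceeds identically by showing that $\theta_t\mu_t$ is a strong solution to the Zakai equation and then invoking the assumed strong uniqueness. The only (harmless) deviation is that you verify $(\ref{integrabilita_b2_zakai})$ directly from the pathwise boundedness of the \cadlag\ process $U^\mu$ on $[0,T]$, whereas the paper first localizes with the stopping times $\alpha_n=\inf\{t:\theta_t>n\}$ and then lets $n\to\infty$ using $\sup_{t\le T}\theta_t<\infty$ (its Proposition $\ref{sup}$) --- the same fact you use in one step.
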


\begin{proof}
According to the definition, if $\mu$ is a strong solution to KS-equation, it verifies $(\ref{integrabilita_b2})$ and the equation $(\ref{mu})$. Then we define the process $\theta$ as the unique solution to the following stochastic differential equation

\begin{equation}\label{theta}
\ud \theta_t=\theta_{t^-}        \left\{    \mu_t\left(\frac{b_1}{\sigma_1}\right) \ud \widetilde{W}^1_t  +  \int_{\mathbb{R}}\left(\frac{\ud \mu_{t^-}(\lambda \phi)}{\ud \eta_t}(z) - 1    \right) \left( m( \ud t, \ud z) - \eta_t( \ud z) \ud t   \right)       \right\}.
\end{equation}
Observe that the Radon Nikodym derivative $\ds \frac{\ud \mu_{t^-}(\lambda \phi)}{\ud \eta_t}(z)$ is well defined; in fact the measures $\ds \mu_{t^-}(\lambda \phi (\ud z))$ and $\ds \eta(t,Y_{t^-}, \ud z)$ are equivalent since they are both equivalent to $ \pi_{t^-}(\lambda \phi (\ud z))$.
\vskip 3mm

Let $\vrb_t(f):=\theta_t \mu_t(f)$, by using It\^{o}'s formula we get

\begin{equation*}
\begin{split}
\ud \vrb_t(f) = & \theta_{t}        \left\{           \mu_t(L^X_0 f) - \mu_t(\lambda f) +  \mu_t(\lambda)   \mu_t(f)     \right\} \ud t +\\
                & + \theta_t \left\{  \frac{1}{\sigma_1(t)} \left[    \mu_t(b_1 f) -   \mu_t(f)   \mu_t(b_1)  \right]   +    \rho \mu_t   \left(    \sigma_0     \frac{\partial f}{\partial x}        \right)    \right\}     \left\{ \widetilde{W}^1_t-\mu_t\left(\frac{b_1}{\sigma_1}\right) \ud t \right\}  +\\
                &   + \theta_{t^-}  \int_{\mathbb{R}} \left\{ \frac{\ud \mu_{t^-}(\lambda \phi f)}{\ud \mu_{t^-}(\lambda \phi)}(z) -\mu_{t^-}(f) + \frac{\ud  \mu_{t^-}( \overline{L} f )}{\ud \mu_{t^-}(\lambda \phi)}(z)   \right\} m(\ud t,\ud z)   +  \theta_t \mu_t(f) \mu_t  \left( \frac{b_1}{\sigma_1}  \right) \ud \widetilde{W}^1_t +  \\
                & + \theta_{t^-}\mu_{t^-}(f) \int_{\mathbb{R}} \left(\frac{\ud   \mu_{t^-}(\lambda\phi)}{\ud \eta_t}(z)  - 1    \right)     \left( m(\ud t, \ud z) - \eta_t(\ud z) \ud t   \right)  + \\
                & +\theta_t    \mu_t    \left(    \frac{b_1}{\sigma_1}    \right)   \left\{    \frac{1}{\sigma_1(t)}   \left[    \mu_t  (b_1 f) -   \mu_t  (f)   \mu_t   (b_1)  \right]   +    \rho \mu_t   \left(    \sigma_0     \frac{\partial f}{\partial x}     \right)  \right\} \ud t+ \\
                & + \theta_{t^-}\int_{\mathbb{R}} \left(\frac{\ud   \mu_{t^-}(\lambda\phi)}{\ud \eta_t}(z)  - 1 \right)  \left( \frac{\ud \mu_{t^-}(\lambda \phi f)}{\ud \mu_{t^-}(\lambda \phi)}(z) -\mu_{t^-}(f) + \frac{\ud  \mu_{t^-}( \overline{L} f )}{\ud \mu_{t^-}(\lambda \phi)}(z)   \right) m(\ud t,\ud z),
\end{split}
\end{equation*}

that, computing all the sums, becomes

\begin{equation*}
\begin{split}
\ud \vrb_t(f) =&   \left\{   \vrb_t(L^X_0 f) - \vrb_t(\lambda f) +  \eta_t(\mathbb{R}) \vrb_t(f)     \right\} \ud t   +    \left\{ \vrb_t\left(\frac{b_1}{\sigma_1} f\right)-\rho \vrb_t   \left(    \sigma_0     \frac{\partial f}{\partial x}     \right)  \right\}  \ud    \widetilde{W}^1_t  \\
                      &  + \int_{\mathbb{R}} \left\{ \frac{\ud \vrb_{t^-}(\lambda \phi f)}{\ud \eta_t}(z) -\vrb_{t^-}(f)+\frac{\vrb_{t^-}( \overline{L} f )}{\ud \eta_t}(z)   \right\} m(\ud t, \ud z).
\end{split}
\end{equation*}

This means that $\vrb$ satisfies the Zakai equation. In order to prove that $\vrb$ is a strong solution to the Zakai equation we also need to verify that $(\ref{integrabilita_b2_zakai})$ holds.
\vskip 3mm

Let us introduce the stopping time $\alpha_n:=\inf\left\{   t>0 : \theta_t>n \right\}$, then, taking into account $(\ref{integrabilita_b2})$, we have

\begin{equation*}
\begin{split}
\int_0^{T \wedge \alpha_n}    \left\{ \vrb_t(\overline{b}_2)+\vrb^2_t\left|\frac{b_1}{\sigma_1}\right|+\vrb_t^2(\sigma_0) \right\}    \ud t = \int_0^{T \wedge \alpha_n}   \left\{    \theta_t \mu_t(\overline{b}_2)+ \theta_t^2 \mu^2_t\left|\frac{b_1}{\sigma_1}\right|+ \theta_t^2 \mu_t^2(\sigma_0)  \right\}    \ud t\leq\\
 n^2 \int_0^{T \wedge \alpha_n} \left\{ \mu_t(\overline{b}_2)+  \mu^2_t\left|\frac{b_1}{\sigma_1}\right|+  \mu_t^2(\sigma_0)\right\} \ud t<\infty;
\end{split}
\end{equation*}

hence  $\vrb_t$ is a strong solution to the Zakai equation for all $t \in [0,\alpha_n \wedge T)$.\vskip 3mm
Strong uniqueness for the solution to the Zakai equation implies that $\vrb_t\ind_{\{t<\alpha_n\wedge T\}}=\vr_t\ind_{\{t<\alpha_n\wedge T\}}$ $P_0-a.s.$, but, on the other hand, $P_0$ and $P$ are equivalent probability measures so uniqueness holds $P-a.s.$ too.

 Taking $n\to\infty$ we get $\vrb_t=\vr_t$ $\forall t \in [0,T]$, since $\alpha_n \xrightarrow[n\to\infty]{} \infty$. This follows from the fact that $\ds \sup_t \theta_t <\infty$ $P-a.s.$ as proved in Appendix \ref{appb}, Proposition $\ref{sup}$.
\vskip 3mm

In particular, since $\mu_t(1)=1$, $\vrb_t(1)=\vr_t(1)=\theta_t$\hskip 2mm $P-a.s.$, so $\ds \mu_t=\frac{\vrb_t}{\vrb_t(1)}=\frac{\vr_t}{\vr_t(1)}=\pi_t$ \hskip 2mm  $P-a.s$, and then strong uniqueness for KS-equation is fulfilled.
\end{proof}

\medskip

\begin{theorem}\label{zakai_uniq}
Assume the hypotheses of Theorem \ref{thm_zakai} and suppose that pathwise uniqueness for the solution to the Kushner-Stratonovich equation holds. Let $\xi$ be a strong solution to the Zakai equation.
Then $\xi_t=\vr_t$ $P-a.s.$ for all $t\in[0,T]$.

\end{theorem}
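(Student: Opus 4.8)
The plan is to run the argument of the previous theorem in reverse: instead of starting from a solution $\mu$ of the KS-equation and producing a solution $\overline{\varrho}$ of the Zakai equation, I start from an arbitrary strong solution $\xi$ of the Zakai equation and produce from it a candidate strong solution of the KS-equation, which by pathwise uniqueness for KS must coincide with $\pi$; unwinding the normalization then forces $\xi=\varrho$. Concretely, given a strong solution $\xi$ to the Zakai equation, I would first check that $\xi_t(1)>0$ for all $t$; since $\xi_{t^-}(\lambda\phi(\ud z))\ll\eta_t(\ud z)$ by definition, one can form the Radon--Nikodym derivative $\frac{\ud\xi_{t^-}(\lambda\phi)}{\ud\eta_t}(z)$, and by the equivalent expression $(\ref{num5})$ with $f\equiv 1$ one sees that $\xi(1)$ is the Dol\'{e}ans--Dade exponential of a $(P_0,\mathcal{F}^Y_t)$-local martingale with jumps strictly bigger than $-1$ (using the positivity built into the definition of $\Psi$ and $\eta$), hence $\xi_t(1)>0$ $P_0$-a.s., and $P_0\sim P$ so this also holds $P$-a.s.

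Next I would define $\mu_t(\ud x):=\xi_t(\ud x)/\xi_t(1)$, which is then an $\mathcal{F}^Y_t$-adapted, c\`{a}dl\`{a}g, $\mathcal{P}(\mathbb{R})$-valued process, and apply It\^{o}'s formula to $\mu_t(f)=\xi_t(f)\cdot\xi_t(1)^{-1}$. For the $\xi(1)^{-1}$ factor I would use $(\ref{num5})$ with $f\equiv 1$ to get the dynamics of $\xi(1)$ and then It\^{o} for the inverse of a semimartingale with jumps; the computation is the mirror image of the one in the proof of Theorem~\ref{thm_zakai} and of the preceding theorem, with $\widetilde{W}^1$ getting transformed back into the innovation $I^\mu$ of $\mu$ via $\ud I^\mu_t=\ud\widetilde{W}^1_t-\mu_t(b_1/\sigma_1)\ud t$ (noting that under $P$ the process $\widetilde{W}^1-\int_0^\cdot\mu_s(b_1/\sigma_1)\ud s$ plays the role of the $\mu$-innovation), and the compensator $\eta_t(\ud z)\ud t$ getting replaced by $\mu_{t^-}(\lambda\phi(\ud z))\ud t$ in the jump term. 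The outcome should be exactly equation $(\ref{mu})$, i.e.\ $\mu$ solves the KS-equation. I would also need the integrability condition $(\ref{integrabilita_b2})$: this I would obtain by a localization with the stopping times $\beta_n:=\inf\{t: \xi_t(1)^{-1}>n\}$ (or $\inf\{t:\xi_t(1)<1/n\}$), exactly as in the previous proof but with the roles of $\theta$ and $\theta^{-1}$ swapped, dividing the $\xi$-estimates in $(\ref{integrabilita_b2_zakai})$ by $\xi_t(1)$ and bounding $\mu_t(b_2)\le \xi_t(\overline{b}_2)/\xi_t(1)$ up to constants; sending $n\to\infty$ requires $\inf_t \xi_t(1)>0$ $P$-a.s., which should follow by the analogue of Proposition~\ref{sup} in Appendix~\ref{appb} applied to the Dol\'{e}ans--Dade exponential $\xi(1)$ (a strictly positive local martingale under $P_0$ converges and stays bounded away from $0$ on $[0,T]$).

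Once $\mu$ is a strong solution to the KS-equation, pathwise uniqueness for that equation (the hypothesis) gives $\mu_t=\pi_t$ $P$-a.s.\ for all $t\in[0,T]$, and since $P_0\sim P$ also $P_0$-a.s. It remains to recover $\xi$ itself. Here I would argue that $\xi_t(1)$ and $\varrho_t(1)=Z_t^{-1}$ both solve the same linear (indeed Dol\'{e}ans--Dade) SDE $(\ref{din_rho1})$ once we substitute $\mu=\pi$ into the coefficients — that is, $\ud\xi_t(1)=\xi_{t^-}(1)\{\pi_t(b_1/\sigma_1)\ud\widetilde{W}^1_t+\int_{\mathbb{R}}\Psi(t,z)(m(\ud t,\ud z)-\eta_t(\ud z)\ud t)\}$ — which has a unique (strong) solution with $\xi_0(1)=1=\varrho_0(1)$, so $\xi_t(1)=\varrho_t(1)$ $P_0$-a.s.; then $\xi_t=\xi_t(1)\,\mu_t=\varrho_t(1)\,\pi_t=\varrho_t$ $P_0$-a.s., hence $P$-a.s. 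The main obstacle I anticipate is not the It\^{o} bookkeeping (which is routine, if lengthy) but rather justifying the two qualitative facts that make the localization arguments terminate: that $\xi_t(1)$ is strictly positive and bounded away from zero on $[0,T]$, and that the Radon--Nikodym derivatives $\frac{\ud\xi_{t^-}(\lambda\phi f)}{\ud\eta_t}$ etc.\ behave well enough that the transformed jump integrand is the one appearing in $(\ref{mu})$ — in particular one must verify the chain-rule identity $\frac{\ud\mu_{t^-}(\lambda\phi f)}{\ud\mu_{t^-}(\lambda\phi)}=\frac{\ud\xi_{t^-}(\lambda\phi f)/\ud\eta_t}{\ud\xi_{t^-}(\lambda\phi)/\ud\eta_t}$ $\eta_t$-a.e., which is where the absolute-continuity hypothesis $\xi_{t^-}(\lambda\phi(\ud z))\ll\eta(t,Y_{t^-},\ud z)$ in the definition of a strong solution to the Zakai equation is essential.
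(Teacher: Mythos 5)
Your overall strategy — normalize $\xi$ by $\xi(1)$ to produce a KS-solution, invoke pathwise uniqueness to identify it with $\pi$, then recover $\xi(1)=\vr(1)$ from the resulting linear SDE — is the same as the paper's. The genuine gap is in the step you yourself flag as the main obstacle: the positivity of $\xi_t(1)$ and the termination of the localization. Your argument that $\xi(1)$ is a Dol\'{e}ans--Dade exponential of a local martingale with jumps $>-1$, hence strictly positive, is circular: to write $\ud\xi_t(1)=\xi_{t^-}(1)\,\ud M_t$ you must divide the integrands of $(\ref{num5})$ (with $f\equiv1$) by $\xi_{t^-}(1)$, which presupposes the positivity you are trying to prove; moreover the jump of $\xi(1)$ at a jump time of $m$ sends $\xi_{t^-}(1)$ to $\frac{\ud\xi_{t^-}(\lambda\phi)}{\ud\eta_t}(z)$, which under the mere absolute continuity $\xi_{t^-}(\lambda\phi(\ud z))\ll\eta_t(\ud z)$ assumed in the definition of a strong solution is only nonnegative, not strictly positive (the "positivity built into $\Psi$" concerns $\pi_{t^-}(\lambda\phi)$ versus $\eta_t$, not $\xi_{t^-}(\lambda\phi)$ versus $\eta_t$). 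Likewise, the "analogue of Proposition~\ref{sup}" cannot give $\inf_t\xi_t(1)>0$: that proposition yields $\sup_t\theta_t<\infty$ via a supermartingale bound, and a strictly positive local martingale on $[0,T]$ is not in general bounded away from zero — the paper gets $\inf_t\vr_t(1)>0$ only because $\vr(1)$ is the density process $\ud P/\ud P_0|_{\mathcal{F}^Y_t}$ of an \emph{equivalent} change of measure, to which Proposition III.3.5 of \cite{JS} applies; no such structure is available a priori for an arbitrary solution $\xi$.

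The paper's fix is a bootstrap that you should adopt: introduce $\beta(\varepsilon)=\inf\{t:\xi_t(1)<\varepsilon\}\wedge T$ \emph{without} claiming anything about $\xi(1)$ beforehand; on $[0,\beta(\varepsilon))$ your normalization and integrability estimates go through (since $\xi_t(1)\geq\varepsilon$ there), giving $\mu=\pi$ and then $\xi(1)=\vr(1)$, hence $\xi=\vr$, on $[0,\beta(\varepsilon))$. This identity forces $\beta(\varepsilon)\geq\inf\{t:\vr_t(1)<\varepsilon\}\wedge T$, and it is the positivity of $\inf_{t\in[0,T]}\vr_t(1)$ — not of $\inf_t\xi_t(1)$ — that lets you conclude $\beta(\varepsilon)=T$ for $\varepsilon$ below the (a.s.\ positive) random level $\varepsilon_0(\omega)=\inf_t\vr_t(1)$. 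With that redirection your localization terminates correctly; as written, the justification offered for the terminal step does not hold.
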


\begin{proof}
Let $\xi$ be a strong solution to the Zakai equation hence the process $\xi_t(1)$ solves

\begin{equation*}
\xi_t(1)= 1+ \int_0^t \frac{1}{\sigma_1(s)}\xi_s(b_1) \ud \widetilde{W}^1_s + \int_0^t\int_{\mathbb{R}} \left[     \frac{\ud \xi_{s^-}(\lambda \phi)}{\ud \eta_s}(z) -\xi_{s^-}(1)       \right]    \left( m(\ud s, \ud z)  - \eta_s(\ud z)\ud s   \right).
\end{equation*}

Since we do not know, a priori, if $\xi_t(1)$ is strictly positive, we define $\beta(\varepsilon):=\inf\{t\in[0,T] | \xi_t(1)<\varepsilon \}\wedge T$, with  $\varepsilon \in (0,1)$. 
Let  $\ds \mu_t(f):=\frac{\xi_t(f)}{\xi_t(1)}$ for all $t\in[0,\beta(\varepsilon))$.  By It\^{o}'s formula we get

\begin{equation}
\begin{split}
\ud \mu_t(f) =&  \left\{   \mu_t(L^X_0 f) - \mu_t(\lambda f) +  \mu_t(\lambda)   \mu_t(f)     \right\} \ud t   +    \left\{  \frac{1}{\sigma_1(t)} \left[    \mu_t(b_1 f) -   \mu_t(f)   \mu_t(b_1)  \right]   +    \rho \mu_t   \left(    \sigma_0     \frac{\partial f}{\partial x}        \right)  \right\}  \ud    I^{\mu}_t  \\
                     &+  \int_{\mathbb{R}}  \left\{\frac{\ud  \mu_{t^-}(\lambda \phi f)}{\ud \mu_{t^-}(\lambda \phi)}(z) -\mu_{t^-}(f) + \frac{\ud \mu_{t^-}( \overline{L} f )}{\ud \mu_{t^-}(\lambda \phi)}(z)  \right\}m( \ud t,\ud z),
\end{split}
\end{equation}

that is the process $\mu_t$ satisfies the KS-equation for all $t\in[0,\beta(\varepsilon))$. Moreover since
\begin{gather*}
\begin{aligned}
&\int_0^{\beta(\varepsilon)} \left\{ \mu_t(b_2)+\mu^2_t\left|\frac{b_1}{\sigma_1}   \right|  \right\}\ud t < \int_0^{\beta(\varepsilon)}\left\{\frac{\xi_t(\overline{b}_2)}{\xi_t(1)}+\frac{\xi^2_t\left|\frac{b_1}{\sigma_1}\right|}{\xi^2_t(1)}\right\}\ud t <{}\\
&\int_0^{\beta(\varepsilon)}\left\{\frac{\xi_t(\overline{b}_2)}{\varepsilon}+\frac{\xi^2_t\left|\frac{b_1}{\sigma_1}\right|}{\varepsilon^2}\right\}\ud t< \frac{1}{\varepsilon^2}\int_0^{\beta(\varepsilon)}\left\{\xi_t(\overline{b}_2)+\xi^2_t\left|\frac{b_1}{\sigma_1}\right|\right\}\ud t < \infty \quad P-a.s.,
\end{aligned}
\end{gather*}

 $\mu$ is a strong solution to the KS-equation on $[0,\beta(\varepsilon))$, and as a consequence  $\forall t\in[0,\beta(\varepsilon))$  $\mu_t=\pi_t$ $P-a.s.$\\

Define $\ds \gamma_t :=\frac{\xi_t(1)}{\vr_t(1)}$, again by  It\^{o}'s formula $\gamma$ solves
\begin{equation}
\begin{split}
\ud \gamma_t=& \gamma_t\pi_t\left(\frac{b_1}{\sigma_1}\right)   \left\{  \pi_t\left(\frac{b_1}{\sigma_2}\right)- \mu_t\left(\frac{b_1}{\sigma_1}\right)  \right\} \ud t +  \gamma_t \left\{  \mu_t\left(\frac{b_1}{\sigma_2}\right)- \pi_t\left(\frac{b_1}{\sigma_1}\right)  \right\} \ud \widetilde{W}^1_t+\\
                   &+ \gamma_t\left\{ \pi_t(\lambda)-\eta_t(\mathbb{R})-\mu_t(\lambda) +\eta_t(\mathbb{R}) \right\} \ud t + \gamma_{t^-}\int_{\mathbb{R}}\left(\frac{\ud \mu_{t^-}(\lambda \phi)}{\ud \eta_t}(z)\frac{\ud \eta_t}{\ud \pi_{t^-}(\lambda \phi)}(z)-1\right)m(\ud t, \ud z).
\end{split}
\end{equation}

Since  $\forall t\in[0,\beta(\varepsilon))$,  $\mu_t=\pi_t$ $P-a.s.$ then $\forall t\in[0,\beta(\varepsilon))$, $\gamma_t=\gamma_0=1$ $P-a.s$   and, equivalently, $\forall t\in[0,\beta(\varepsilon))$, $\xi_t(1)=\vr_t(1)$ $P-a.s.$.
\vskip 3mm
As a consequence,  $\forall t\in[0,\beta(\varepsilon))$, $\ds \xi_t(f)=\mu_t(f)\xi_t(1)=\pi_t(f)\vr_t(1)=\vr_t(f) \quad P-a.s.$, which in particular implies that $\beta(\varepsilon)=\inf\{t\in[0,T] | \xi_t(1)<\varepsilon \}  \wedge T   \geq \inf\{t\in[0,T] | \vr_t(1)<\varepsilon \}  \wedge      T$.
\vskip 3mm

Finally, let us observe that by Proposition III.3.5 in \cite{JS}, recalled in Appendix \ref{appc}, $\inf_{t\in[0,T]}\vr_t(1)>0$ .

Then $\beta(\varepsilon) =T $ for some $\varepsilon<\varepsilon_0(\omega)$ where $\varepsilon_0(\omega)=\inf_{t\in [0,T]}\vr_t(1)>0$, and this concludes the proof.

\end{proof}






\subsection{Uniqueness for the Zakai Equation}

Finally we deduce strong uniqueness for the Zakai equation from strong uniqueness of the KS-equation proved in \cite{cc2011}, using the Filtered Martingale Problem approach.


\begin{theorem}\label{unicita_zakai}
Let $(X,Y)$ be the partially observed system in $(\ref{sistema})$. Assume the hypotheses of Theorem \ref{thm_zakai}, $(\ref{assumption_c})$ in Appendix $\ref{appendixC}$, and one of the following conditions

\begin{equation}\label{g1}
\sup_{t,x}\lambda^0(t,x) +  \sup_{t,x,y}\lambda(t,x,y)<\infty
\end{equation}

or
\begin{equation}\label{g2}
\sup_{t,x,y} \int_Z \{ |K_0(t,x;\zeta)| + |K_1(t,x,y, \zeta) | \} \nu(\ud \zeta) <\infty.
\end{equation}

Let $\xi$ be a strong solution to the Zakai  equation such that
 $\xi_{t^-}(\lambda \phi(\ud z))\ud t$ and $\vr_{t^-}(\lambda \phi(\ud z))\ud t$ are equivalent measures over $[0,T]\times \mathbb{R}$, then $\xi_t=\vr_t$ $P-a.s.$ for all $t\leq T$.
\end{theorem}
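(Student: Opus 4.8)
The plan is to reduce the statement to pathwise uniqueness for the Kushner--Stratonovich equation and then invoke the uniqueness result of \cite{cc2011}. By Theorem \ref{zakai_uniq}, once pathwise uniqueness holds for the KS-equation (in the class of solutions to which the argument applies), every strong solution of the Zakai equation must coincide with $\vr$; so the whole task is to establish KS-uniqueness under the present hypotheses and to check that the normalization $\xi\mapsto\xi/\xi(1)$ produces a process inside that class.

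First I would recall from \cite{cc2011} that, under the well-posedness conditions of Assumption $\ref{assumption_c}$ in Appendix $\ref{appendixC}$ together with either $(\ref{g1})$ or $(\ref{g2})$, the Filtered Martingale Problem approach of \cite{KO, cc2011} yields pathwise uniqueness for the KS-equation within the class of strong solutions $\mu$ for which $\mu_{t^-}(\lambda\phi(\ud z))\ud t$ is equivalent to $\pi_{t^-}(\lambda\phi(\ud z))\ud t$: any such $\mu$ satisfies $\mu_t=\pi_t$, $P$-a.s., for all $t\leq T$. Conditions $(\ref{g1})$ and $(\ref{g2})$ are exactly what is needed to control the jump part, so that the ``un-filtered'' process associated with $\mu$ solves the well-posed martingale problem for the generator of the Markov pair $(X,Y)$ and its uniqueness transfers back to the filter. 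Then I would rerun the argument in the proof of Theorem \ref{zakai_uniq}: set $\beta(\varepsilon):=\inf\{t\in[0,T]:\xi_t(1)<\varepsilon\}\wedge T$ and $\mu_t(f):=\xi_t(f)/\xi_t(1)$ on $[0,\beta(\varepsilon))$. As shown there, $\mu$ solves the KS-equation and satisfies $(\ref{integrabilita_b2})$ on $[0,\beta(\varepsilon))$. The additional hypothesis of the theorem, namely that $\xi_{t^-}(\lambda\phi(\ud z))\ud t$ and $\vr_{t^-}(\lambda\phi(\ud z))\ud t$ are equivalent, combined with $\vr_{t^-}(\lambda\phi)=Z_{t^-}^{-1}\pi_{t^-}(\lambda\phi)\sim\pi_{t^-}(\lambda\phi)$ and with $\xi_t(1)>0$ on $[0,\beta(\varepsilon))$, guarantees that $\mu_{t^-}(\lambda\phi(\ud z))\ud t$ is equivalent to $\pi_{t^-}(\lambda\phi(\ud z))\ud t$ there; this both makes the Radon--Nikodym derivatives in the $\mu$-equation and in the subsequent $\gamma$-equation well defined and puts $\mu$ into the class covered by the quoted uniqueness result, so $\mu_t=\pi_t$, $P$-a.s., on $[0,\beta(\varepsilon))$.

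The remainder is identical to the proof of Theorem \ref{zakai_uniq}: from $\mu=\pi$ on $[0,\beta(\varepsilon))$ one checks by It\^o's formula that $\gamma_t:=\xi_t(1)/\vr_t(1)$ solves a linear equation all of whose coefficients vanish there, hence $\gamma_t\equiv 1$ and $\xi_t(1)=\vr_t(1)$ on $[0,\beta(\varepsilon))$, so $\xi_t(f)=\mu_t(f)\xi_t(1)=\pi_t(f)\vr_t(1)=\vr_t(f)$ there, which forces $\beta(\varepsilon)\geq\inf\{t:\vr_t(1)<\varepsilon\}\wedge T$. Since $\inf_{t\in[0,T]}\vr_t(1)>0$ $P$-a.s. by Proposition III.3.5 of \cite{JS} (recalled in Appendix \ref{appc}), taking $\varepsilon<\varepsilon_0(\omega):=\inf_{t\in[0,T]}\vr_t(1)$ gives $\beta(\varepsilon)=T$, and letting $\varepsilon\downarrow 0$ yields $\xi_t=\vr_t$ $P$-a.s. for all $t\leq T$. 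The main obstacle is the first step: verifying that the Filtered Martingale Problem machinery of \cite{cc2011} actually delivers KS-uniqueness under precisely $(\ref{g1})$ or $(\ref{g2})$ --- that under these conditions the required moment and measurability properties hold, so that a solution of the filtering equation corresponds to a solution of the well-posed martingale problem for $(X,Y)$ and uniqueness can be pulled back. Everything after that is the bookkeeping of carrying the equivalence of the jump-intensity measures through the normalization $\xi\mapsto\xi/\xi(1)$ and removing the localizing stopping time.
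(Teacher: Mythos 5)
Your proposal is correct and follows essentially the same route as the paper: normalize $\xi$ to $\mu=\xi/\xi(1)$ up to the stopping time $\beta(\varepsilon)$, translate the hypothesis that $\xi_{t^-}(\lambda\phi)\,\ud t\sim\vr_{t^-}(\lambda\phi)\,\ud t$ into $\mu_{t^-}(\lambda\phi)\,\ud t\sim\pi_{t^-}(\lambda\phi)\,\ud t$, invoke the Filtered Martingale Problem uniqueness for the KS-equation from \cite{cc2011} under $(\ref{assumption_c})$ and $(\ref{g1})$ or $(\ref{g2})$, and then remove the localization via $\inf_{t\in[0,T]}\vr_t(1)>0$. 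The only cosmetic difference is that the paper folds the extra localization directly into the stopping times $\tau_n$ of the argument of Theorem 3.14 in \cite{cc2011} (replacing $\tau_n$ by $\tau_n\wedge\beta(1/n)$), whereas you first fix $\varepsilon$ and then let $\varepsilon\downarrow 0$; both are equivalent bookkeeping.
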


\begin{proof}
Let $\ds \mu(f) :=\frac{\xi(f)}{\xi(1)}$. By the same steps of the proof of Theorem \ref{zakai_uniq}, $\mu$ is a strong solution to the Kushner-Stratonovich equation over $[0,\beta(1/n))$, where $\ds \beta(1/n):=\inf\{t\in[0,T] | \xi_t(1)< 1/n \}\wedge T$. Observe that, by definition, the equivalence between the measures $\xi_{t^-}(\lambda \phi (\ud z))\ud t$ and $\vr_{t^-}(\lambda \phi (\ud z))\ud t$ corresponds to the equivalence between the measure $\mu_{t^-}(\lambda \phi (\ud z))\ud t$ and $\pi_{t^-}(\lambda \phi (\ud z))\ud t$. Moreover, by Proposition 5.1 in \cite{cc2011}, $(\ref{assumption_c})$ and either $(\ref{g1})$ or $(\ref{g2})$ imply  uniqueness for the solutions to the filtered martingale problem associated with the generator of the pair $(X,Y)$. Then the thesis follows from the proof of Theorem 3.14 in \cite{cc2011}, replacing $\tau_n$ by $\tau_n\wedge \beta(1/n)$.
\end{proof}

\section{Particular models}

In the sequel we analyze three special cases under which existence and uniqueness for the solutions to the Zakai equation are fulfilled.
In particular, in the first two examples we verify that Assumption $\ref{assumption_eta}$ holds true and provide the explicit expression for the measure $\eta_t(\ud z)$. Instead, in the third, one we derive uniqueness for the solution to the Zakai equation by direct computations.

\subsection{Observation dynamics with jump sizes in a countable space}

Consider the case where $Y$ is a jump diffusion process, whose jump sizes take values in a discrete set $\H \subset \R$.
In such a situation, the integer valued random measure $m(\ud t, \ud z)$ can be written as
\begin{equation} \label{m}
m(\ud t, \ud z) = \sum_{h\in \H \setminus \{0\}} m(\ud t,\{h\})\ \delta_h(\ud z)
\end{equation}
where $\ds m(\ud t,\{h\}) =
\sum_{n\geq 1}
\ind_{\{Z_n=h\}}\  \delta _{T_n} (\ud t)\  \ind_{\{T_n < \infty\}}$, and we recall that  $\{T_n, Z_n\}_{n\in \mathbb{N}}$ denotes the sequence of jump times of the process $Y$ and the corresponding jump sizes.
\medskip

The $(P, \mathcal{F}_t)$-dual predictable projection of $m(\ud t, \ud z )$ is given by

\begin{equation} \label{mp}
m^p(\ud t,\ud z) = \lambda_t \phi_t(\ud z) \ud t =\sum_{h \in \H\setminus\{0\}}\lambda_t^h \delta_h(\ud z),
\end{equation}

where, as in the general case,  $\ds \lambda(t,x,y)=\nu(d^1(t,x,y))$, and $\lambda_t=\lambda(t, X_{t^-}, Y_{t^-})$ is the intensity of the process $N_t$ which counts the total number of jumps of the process $Y$. Observe that $\forall h \in \H\setminus\{0\}$, $\lambda_t^h=\lambda_t \phi_t(\{h\})$, is the $(P, \mathcal{F}_t)$-intensity of the point process  $N^h_t=m([0,t)\times \{h\})$ which counts the jumps of the process $Y$, with width $h$.
\medskip

On the other hand the $(P, \mathcal{F}^Y_t)$-dual predictable projection of $m(\ud t, \ud z)$ is
\begin{equation} \label{proiez_predic2}
\pi_{t^-} (\lambda \phi(\ud z)) \ud t =\sum_{h \in \H\setminus\{0\}}\pi_{t^-}(\lambda^h) \delta_h(\ud z),
\end{equation}
 and the Kushner-Stratonovich equation becomes
\begin{equation}\label{ks3}
\begin{aligned}[t]
&\pi_t(f)= f(0,x_0) + \int_0^t\pi_s(L^X f) \ud s +\int_0^t \Big \{ \pi_s\left(\frac{b_1}{\sigma_1}f\right)-\pi_s\left(\frac{b_1}{\sigma_1}\right)\pi_s(f)+\rho \pi_s\left(\sigma_0 \frac{\partial f}{\partial x}\right) \Big \} \ud I_t \\
&\qquad + \sum_{h\in \H \setminus\{0\}}\int_0^t (\pi_{s^-}(\lambda^h))^+
\left\{\pi_{s^-}(\lambda^h f)-\pi_{s^-}(\lambda^h)\pi_{s^-}(f)+ \pi_{s^-}(R^hf)  \right\}\Big( \ud N^h_s-\pi_{s^-}(\lambda^h)\ud s \Big)
\end{aligned}
\end{equation}
where $a^+ := {1\over  a}\ \ind_{a>0}$,
$$
R^hf(t, x, y) := \int_{d_1^h(t, x, y)}\Big[f(t, x +  K_0(t,x;\zeta)) - f(t , x)\Big]\ \nu(\ud \zeta),
$$
and $d_1^h(t,x,y):=\{\zeta\in Z: K_1(t,x,y; \zeta)=h\}$.

\begin{rem}
In the case where the observation is given by a pure jump process taking values in a countable space, the filtering problem for this model has been studied in \cite{CG5}. Here, the authors derived the Kushner-Stratonovich equation  (Proposition 3.1 in \cite{CG5}) and provide a linearization method of this equation which does not work in presence of a diffusive term, as in the model considered in this note.
\end{rem}

Strong uniqueness for the solution to the KS-equation \ref{ks3} can be deduced by that of the general model proved in \cite{cc2011}, as follows.

\begin{corollary}[Uniqueness for the KS-equation]\label{corollary_h}
Assume that $\forall h \in \H \setminus \{0\}$
\begin{equation}\label{pilambda_h-positiva}
\pi_t(\lambda^h)>0 \quad P-a.s. \quad \forall t \in[0,T]
\end{equation}and that either $(\ref{g1})$ or $(\ref{g2})$ holds. Let $\mu$ be a strong solution of the KS-equation $(\ref{ks3})$ such that $\forall h \in \H \setminus \{0\}$
\begin{equation}\label{mulambda_h-positiva}
\mu_t(\lambda^h)>0 \quad P-a.s. \quad \forall t \in[0,T].
\end{equation}
Then $\mu_t=\pi_t \;\;P-a.s.$ for all $t\leq T$.
\end{corollary}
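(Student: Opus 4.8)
The idea is that $(\ref{ks3})$ is just the general Kushner--Stratonovich equation of Theorem $\ref{EQ}$ written out for the present discrete-jump setting, so that Corollary $\ref{corollary_h}$ will follow from the general strong-uniqueness theorem for the KS-equation proved in \cite{cc2011} (Theorem 3.14 there), once its hypotheses are checked in this particular case. Those hypotheses are: (a) uniqueness for the filtered martingale problem associated with the generator of $(X,Y)$; and (b) equivalence of the predictable jump-intensity measures $\mu_{t^-}(\lambda\phi(\ud z))\,\ud t$ and $\pi_{t^-}(\lambda\phi(\ud z))\,\ud t$ over $[0,T]\times\mathbb{R}$.

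First I would record that, since $K_1(t,x,y;\zeta)\in\H$ in this model, the measure $\lambda(t,x,y)\phi(t,x,y,\ud z)$ is purely atomic and equals $\sum_{h\in\H\setminus\{0\}}\lambda^h(t,x,y)\,\delta_h(\ud z)$ with $\lambda^h(t,x,y)=\nu(d_1^h(t,x,y))$; integrating against $\pi_{t^-}$ gives $(\ref{proiez_predic2})$, while integrating against a strong solution $\mu$ gives $\mu_{t^-}(\lambda\phi(\ud z))=\sum_{h\in\H\setminus\{0\}}\mu_{t^-}(\lambda^h)\,\delta_h(\ud z)$. Consequently, under $(\ref{mulambda_h-positiva})$ the factors $(\mu_{s^-}(\lambda^h))^{+}$ equal $1/\mu_{s^-}(\lambda^h)$ for $\mathrm dt$-a.e.\ $t$, so the sum of jump integrals in $(\ref{ks3})$ is exactly the atomic version of the general jump term of the Kushner--Stratonovich equation $(\ref{ks})$; i.e.\ $\mu$ (and, by $(\ref{pilambda_h-positiva})$, $\pi$) is a strong solution of the KS-equation in the form used in \cite{cc2011}.

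The key step is the verification of (b). By the representation above, both $\mu_{t^-}(\lambda\phi(\ud z))\,\ud t$ and $\pi_{t^-}(\lambda\phi(\ud z))\,\ud t$ are carried by the fixed countable set $\bigcup_{h\in\H\setminus\{0\}}[0,T]\times\{h\}$, and on the slice $[0,T]\times\{h\}$ they reduce to $\mu_{t^-}(\lambda^h)\,\ud t$ and $\pi_{t^-}(\lambda^h)\,\ud t$ respectively. Assumptions $(\ref{pilambda_h-positiva})$ and $(\ref{mulambda_h-positiva})$ force both densities to be strictly positive for $\mathrm dt$-a.e.\ $t$ (the left versions differ from the everywhere-positive processes $t\mapsto\pi_t(\lambda^h)$ and $t\mapsto\mu_t(\lambda^h)$ at most on the at-most-countable set of their jump times), so each slice measure is equivalent to Lebesgue measure on $[0,T]$, hence to the corresponding slice of the other measure; summing over $h\in\H\setminus\{0\}$ yields the required equivalence on $[0,T]\times\mathbb{R}$.

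Finally I would close the argument as in the proof of Theorem $\ref{unicita_zakai}$: by Proposition 5.1 in \cite{cc2011}, Assumption $\ref{assumption_c}$ together with either $(\ref{g1})$ or $(\ref{g2})$ gives uniqueness for the filtered martingale problem associated with the generator of $(X,Y)$, which is (a); then Theorem 3.14 in \cite{cc2011}, whose only remaining requirement is the measure equivalence (b) just established, yields $\mu_t=\pi_t$ $P$-a.s.\ for all $t\le T$. The only genuinely delicate point in this plan is step (b)---promoting the convenient pointwise, atom-by-atom positivity hypotheses to the global equivalence of two random measures on $[0,T]\times\mathbb{R}$; everything else is bookkeeping together with a direct citation of the machinery already assembled in \cite{cc2011}.
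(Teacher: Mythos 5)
Your proposal is correct and follows essentially the same route as the paper: the paper's proof likewise observes that under $(\ref{pilambda_h-positiva})$ and $(\ref{mulambda_h-positiva})$ the purely atomic measures $\pi_{t^-}(\lambda\phi(\ud z))\,\ud t$ and $\mu_{t^-}(\lambda\phi(\ud z))\,\ud t$ are mutually absolutely continuous, writes down the same atom-by-atom Radon--Nikodym derivatives $\sum_h \ind_{\{z=h\}}\mu_{t^-}(\lambda^h)/\pi_{t^-}(\lambda^h)$ and its reciprocal, and then concludes by Theorem 3.14 of \cite{cc2011}. Your extra care with the left-limit versions and the slice-by-slice equivalence is a harmless elaboration of the same argument.
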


\begin{proof}
It is sufficient to see that under  $(\ref{pilambda_h-positiva})$ and $(\ref{mulambda_h-positiva})$, the measures $\pi_t(\lambda \phi (\ud z)) \ud t$ and $\mu_t(\lambda \phi (\ud z)) \ud t$ are equivalent, since, in this case, there exist the Radon Nikodym derivatives
$$
\frac{\ud\mu_{t^-}(\lambda \phi)}{\ud\pi_{t^-}(\lambda \phi)}(z)=\frac{\sum_{h\in \H\setminus\{0\}} \delta_{h}(z)\mu_{t^-}(\lambda^h)}{\sum_{h \in \H \setminus \{0\} } \delta_h(z)\pi_{t^-}(\lambda^h)} = \sum_{h\in\H\setminus\{0\}} \ind_{\{z=h\}} \frac{\mu_{t^-}(\lambda^h)}{\pi_{t^-}(\lambda^h)}
$$
  and

$$
\frac{\ud\pi_{t^-}(\lambda \phi)}{d\mu_{t^-}(\lambda \phi)}(z)=\frac{\sum_{h\in \H\setminus\{0\}} \delta_{h}(z)\pi_{t^-}(\lambda^h)}{\sum_{h\in\H\setminus\{0\}}\delta_{h}(z)\mu_{t^-}(\lambda^h)}=\sum_{h\in\H\setminus\{0\}}\ind_{\{z=h\}}\frac{\pi_{t^-}(\lambda^h)}{\mu_{t^-}(\lambda^h)},
$$
and then apply Theorem $3.14$ in \cite{cc2011}.
\end{proof}

Let us see that under the hypothesis $(\ref{pilambda_h-positiva})$, Assumption $\ref{assumption_eta}$ is fulfilled since the measure $\pi_{t^-}(\lambda \phi(\ud z))$ described in $(\ref{proiez_predic2})$ is equivalent to the measure
\begin{equation}\label{eta}
\eta_t( \ud z):=\sum_{h\in \H \setminus \{0\}}\delta_h(\ud z)
\end{equation}
which is deterministic and satisfies the condition $(\ref{integrabilita_eta})$ if and only if $\H$ is a finite set.
Hence the probability measure $P_0$ on $(\Omega, \mathcal{F}^Y_T)$ is given by
\begin{equation}\label{p0_h}
\left.\frac{\ud P_0}{\ud P}\right|_{\mathcal{F}^Y_T}=Z_t=\mathcal{E}\left(-\int_0^t \pi_s\left(\frac{b_1}{\sigma_1}\right)\ud I_t + \sum_{h\in \H\setminus \{0\} } \int_0^t \left(\frac{1}{\pi_{s^-}(\lambda^h)}-1\right)[\ud N^h_s-\pi_{s^-}(\lambda^h)]\right),
\end{equation}

and as in the general case we assume that $Z$ is a $(P,\mathcal{F}^Y_t)$-martingale. Under  $P_0$, $\forall h \in \H \setminus \{0\}$, the point processes $N^h_t=m([0,t)\times \{h\})$ become standard Poisson processes.

\begin{rem}
For this model the Protter-Shimbo condition, which ensure us that $Z$ is a martingale, is

\begin{equation}\label{shimbo_2}
 \mathbb{E}\left[\exp   \left\{  \frac{1}{2}\int_0^T\pi_t^2\left(\frac{b_1}{\sigma_1}\right) \ud t+\int_0^T \sum_{h\in\H\setminus\{0\}} \frac{(1-\pi_t(\lambda^h))^2}{\pi_t(\lambda^h)}         \ud t\right\}  \right]<+\infty.
\end{equation}
Sufficient conditions are given by

\[
\left|\pi_t\left(\frac{b_1}{\sigma_1}\right)\right|\leq C_1 \quad \textrm{and} \quad 0\leq C_2\leq \pi_t(\lambda^h)\leq C_3 \quad P-a.s. \quad \forall t\in [0,T]
\]
for some suitable positive constants $C_1, C_2$ and $C_3$.
\end{rem}

Recall that the unnormalized filter is defined by $\ds \vr_t(\ud x)=Z_t^{-1} \pi_t(\ud x)$, then the dynamics of $\vr_t(1)=Z_t^{-1}$ is described by the following equation

\begin{equation}
\ud \vr_t(1)=\vr_{t^-}(1)\left(\pi_t\left(\frac{b_1}{\sigma_1}\right)\ud \widetilde{W}^1_t + \sum_{h\in \H \setminus \{0\}}(\pi_{t^-}(\lambda^h)-1)(\ud N^h_t-\ud t)\right).
\end{equation}

\begin{corollary} [the Zakai equation]
Let $(X,Y)$ be the partially observed system where in particular the integer valued measure in the dynamics of the observation process is given by $(\ref{m})$, and the set $\H$ is finite.
Assume the hypotheses of Theorem $\ref{EQ}$ and $(\ref{pilambda_h-positiva})$.
Let $P_0$ be the probability measure defined by $(\ref{p0_h})$ where $Z$ is a $(P,\mathcal{F}^Y_t)$-martingale. Assume moreover $(\ref{hp_deboli2})$;
then, $\forall f \in \mathcal{C}_b^{1,2}([0,T]\times \mathbb{R})$, the unnormalized filter $\vr$ satisfies the Zakai equation

\begin{equation}\label{zakai_h}
\ud \vr_t(f)=\vr_t(L^X f)+ \left\{\vr_t\left(\frac{b_1}{\sigma_1}f\right)+ \rho \vr_t\left(\sigma_0\frac{\partial f}{\partial x}\right)\right\}\ud \widetilde{W}^1_t +\sum_{h\in \H \setminus \{0\}}[\vr_{t^-}(\lambda^h f)-\vr_{t^-}(f)+\vr_{t^-}(R^h f)]\Big( \ud N^h_t-\ud t \Big).
\end{equation}

\end{corollary}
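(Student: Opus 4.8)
The plan is to recognize this corollary as the specialization of Theorem~\ref{thm_zakai} to the present discrete-jump setting, so that the only real work is to check that the abstract hypotheses of that theorem, namely Assumption~\ref{assumption_eta} and Assumption~\ref{martingala_Z}, are in force, and then to rewrite the general Zakai equation $(\ref{unnorm.filter})$ explicitly in terms of the point processes $N^h$.

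First I would verify Assumption~\ref{assumption_eta}. I take the deterministic transition function $\eta(t,y,\ud z):=\sum_{h\in\H\setminus\{0\}}\delta_h(\ud z)$ from $(\ref{eta})$, which is trivially $\mathcal{F}^Y_t$-predictable. By $(\ref{proiez_predic2})$ the $(P,\mathcal{F}^Y_t)$-dual predictable projection of $m$ is $\pi_{t^-}(\lambda\phi(\ud z))=\sum_{h\in\H\setminus\{0\}}\pi_{t^-}(\lambda^h)\,\delta_h(\ud z)$, so under the positivity hypothesis $(\ref{pilambda_h-positiva})$ the measures $\eta_t(\ud z)$ and $\pi_{t^-}(\lambda\phi(\ud z))$ are mutually absolutely continuous, the process $\Psi$ of $(\ref{defn_eta})$ being given by $1+\Psi(t,z)=\sum_{h\in\H\setminus\{0\}}\ind_{\{z=h\}}\pi_{t^-}(\lambda^h)>0$ $P$-a.s. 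Since $\H\setminus\{0\}$ is a finite set, $\eta_t(\mathbb{R})$ is a bounded constant and $(\ref{integrabilita_eta})$ holds. Assumption~\ref{martingala_Z} is exactly the standing hypothesis that $Z$ defined by $(\ref{p0_h})$ is a $(P,\mathcal{F}^Y_t)$-martingale, while the hypotheses of Theorem~\ref{EQ} and condition $(\ref{hp_deboli2})$ are assumed in the statement. Hence all assumptions of Theorem~\ref{thm_zakai} are met, and $\vr$ solves $(\ref{unnorm.filter})$.

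It then remains to specialize $(\ref{unnorm.filter})$. Using $m(\ud t,\ud z)=\sum_{h\in\H\setminus\{0\}}\ud N^h_t\,\delta_h(\ud z)$ and $\eta_t(\ud z)\,\ud t=\sum_{h\in\H\setminus\{0\}}\delta_h(\ud z)\,\ud t$, I would identify the measures $\vr_{t^-}(\lambda\phi f)=\sum_{h\in\H\setminus\{0\}}\vr_{t^-}(\lambda^h f)\,\delta_h(\ud z)$ and $\vr_{t^-}(\overline{L}f)=\sum_{h\in\H\setminus\{0\}}\vr_{t^-}(R^h f)\,\delta_h(\ud z)$, the latter because $\overline{L}f(t,x,y,\{h\})=R^h f(t,x,y)$ for $h\neq 0$. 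Their Radon--Nikodym derivatives against $\eta_t$ are then $\sum_{h}\ind_{\{z=h\}}\vr_{t^-}(\lambda^h f)$ and $\sum_{h}\ind_{\{z=h\}}\vr_{t^-}(R^h f)$, so the random-measure integral in $(\ref{unnorm.filter})$ collapses to $\sum_{h\in\H\setminus\{0\}}[\vr_{t^-}(\lambda^h f)-\vr_{t^-}(f)+\vr_{t^-}(R^h f)](\ud N^h_t-\ud t)$. Finally, since $\sigma_1(t)=\sigma_1(t,Y_t)$ is $\mathcal{F}^Y_t$-measurable one has $\sigma_1(t)^{-1}\vr_t(b_1 f)=\vr_t(\tfrac{b_1}{\sigma_1}f)$; keeping the drift term $\vr_t(L^Xf)\,\ud t$ and the Brownian term of $(\ref{unnorm.filter})$ unchanged yields exactly $(\ref{zakai_h})$.

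The argument is essentially bookkeeping and I expect no genuine obstacle: the two substantive points are that $(\ref{pilambda_h-positiva})$ is precisely what makes $\eta_t$ and $\pi_{t^-}(\lambda\phi(\ud z))$ equivalent, and that the finiteness of $\H$ is precisely what secures $(\ref{integrabilita_eta})$ for the chosen $\eta$; both have already been noted in the discussion preceding the statement, and what remains is a careful tracking of the Radon--Nikodym derivatives against the discrete reference measure.
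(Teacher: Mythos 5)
Your proposal is correct and follows exactly the route the paper intends: the paper gives no separate proof of this corollary because the verification of Assumption~\ref{assumption_eta} (with $\eta_t(\ud z)=\sum_{h\in\H\setminus\{0\}}\delta_h(\ud z)$, equivalence via $(\ref{pilambda_h-positiva})$, and integrability via finiteness of $\H$) is carried out in the text immediately preceding the statement, after which the result is the direct specialization of Theorem~\ref{thm_zakai}. Your bookkeeping of the Radon--Nikodym derivatives and the identification $\overline{L}f(t,x,y,\{h\})=R^hf$ match the paper's conventions, so nothing is missing.
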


\begin{corollary}[Uniqueness for the Zakai equation]\label{zakai-uniq2}
Let $(X,Y)$ be the usual partially observed system $(\ref{sistema})$, where in particular the integer valued random measure in the dynamics of $Y$ is given by $(\ref{m})$ and the set $\H$ is finite.
Assume that, $\forall h \in \H \setminus\{0\}$,  $\vr_t(\lambda^h)>0 \quad P-a.s. \quad \forall t\in[0,T]$, $(\ref{assumption_c})$ and either $(\ref{g1})$ or $(\ref{g2})$. Let $\xi$ be a strong solution to the Zakai equation such that, $\forall h \in \H\setminus\{0\}$
\begin{equation}\label{positiva_h}\xi_{t^-}(\lambda^h)>0 \quad P-a.s. \quad \forall t\in[0,T].\end{equation}
Then $\xi_t(f)=\vr_t(f)$ $P_0-a.s.$ $\forall t\in[0,T]$.
\end{corollary}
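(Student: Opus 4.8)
The plan is to derive the statement as a direct specialization of Theorem~\ref{unicita_zakai} to the model with jump sizes in the finite set $\H$, so that essentially all the work consists in checking that the hypotheses of that theorem are in force here. First I would observe that, because $\H$ is finite, the deterministic measure $\eta_t(\ud z)=\sum_{h\in\H\setminus\{0\}}\delta_h(\ud z)$ of $(\ref{eta})$ satisfies the integrability condition $(\ref{integrabilita_eta})$, and that under the standing assumption $\vr_t(\lambda^h)>0$ $P$-a.s.\ for every $h$ and every $t$ --- equivalently $\pi_t(\lambda^h)>0$, since $\vr_t=Z_t^{-1}\pi_t$ with $Z_t^{-1}>0$ --- the measure $\pi_{t^-}(\lambda\phi(\ud z))$ of $(\ref{proiez_predic2})$ is equivalent to $\eta_t(\ud z)$. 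Hence Assumption~\ref{assumption_eta} holds, the probability measure $P_0$ of $(\ref{p0_h})$ is well defined, and, together with the standing hypotheses underlying the derivation of the Zakai equation (Assumption~\ref{hp_ks}, $(\ref{hp_deboli1})$, $(\ref{hp_deboli2})$ and Assumption~\ref{martingala_Z}), this puts us inside the framework of Theorem~\ref{thm_zakai}, so that $\vr$ is the strong solution of the Zakai equation $(\ref{zakai_h})$.

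Next I would verify the equivalence, required by Theorem~\ref{unicita_zakai}, of the random measures $\xi_{t^-}(\lambda\phi(\ud z))\,\ud t$ and $\vr_{t^-}(\lambda\phi(\ud z))\,\ud t$ on $[0,T]\times\mathbb{R}$. By $(\ref{proiez_predic2})$ these are the discrete measures $\sum_{h\in\H\setminus\{0\}}\xi_{t^-}(\lambda^h)\,\delta_h(\ud z)\,\ud t$ and $\sum_{h\in\H\setminus\{0\}}\vr_{t^-}(\lambda^h)\,\delta_h(\ud z)\,\ud t$, both supported on the finite set $\H\setminus\{0\}$. Since, by $(\ref{positiva_h})$ and the standing assumption on $\vr$, each weight $\xi_{t^-}(\lambda^h)$ and $\vr_{t^-}(\lambda^h)$ is strictly positive $P$-a.s.\ for a.e.\ $t$, the two measures are mutually absolutely continuous, with Radon--Nikodym derivatives $\sum_{h\in\H\setminus\{0\}}\ind_{\{z=h\}}\xi_{t^-}(\lambda^h)/\vr_{t^-}(\lambda^h)$ and its reciprocal --- exactly as in the proof of Corollary~\ref{corollary_h}.

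Finally, having also assumed $(\ref{assumption_c})$ and either $(\ref{g1})$ or $(\ref{g2})$, all the assumptions of Theorem~\ref{unicita_zakai} are met, and that theorem yields $\xi_t=\vr_t$ $P$-a.s.\ for all $t\le T$; since $P$ and $P_0$ are equivalent on $\mathcal{F}^Y_T$ and $\xi$, $\vr$ are $\mathcal{F}^Y_t$-adapted, this is the same as $\xi_t(f)=\vr_t(f)$ $P_0$-a.s.\ for all $t\in[0,T]$. I do not expect a genuine obstacle here beyond bookkeeping; the one point I would treat with some care is passing from the pointwise positivity of the finitely many weights $\xi_{t^-}(\lambda^h)$, $\vr_{t^-}(\lambda^h)$ to the equivalence of the product measures on $[0,T]\times\mathbb{R}$, so that the Radon--Nikodym derivatives occurring in $(\ref{zakai_h})$ and in the equation for $\xi$ are well defined $\ud t\otimes\eta_t(\ud z)$-a.e. --- which is immediate thanks to the finiteness of $\H$.
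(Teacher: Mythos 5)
Your proposal is correct and follows essentially the same route as the paper: the paper's proof consists precisely of noting that, by the positivity of the weights $\xi_{t^-}(\lambda^h)$ and $\vr_{t^-}(\lambda^h)$ on the finite support $\H\setminus\{0\}$, the measures $\xi_{t^-}(\lambda\phi(\ud z))$ and $\vr_{t^-}(\lambda\phi(\ud z))$ are equivalent, and then invoking Theorem~\ref{unicita_zakai}. The extra bookkeeping you supply (checking Assumption~\ref{assumption_eta} via the finiteness of $\H$, and the passage between $P$- and $P_0$-a.s.\ statements) is consistent with, and slightly more explicit than, what the paper records.
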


\begin{proof}
It is sufficient to see that, by $(\ref{positiva_h})$ and the hypothesis that $\vr_t(\lambda^h)>0$ $P-a.s.$ $\forall h \in \H \setminus\{0\}$ and $\forall t \in [0,T]$, the measures $\ds \xi_{t^-}(\lambda \phi(\ud z))=\sum_{h\in \H\setminus \{0\}}\xi_{t^-}(\lambda^h) \delta_{h}(\ud z)$ and $\ds \vr_{t^-}(\lambda \phi(\ud z))=\sum_{h\in \H\setminus \{0\}}\vr_{t^-}(\lambda^h) \delta_{h}(\ud z)$ are equivalent. Then the result follows from Theorem $\ref{unicita_zakai}$.
\end{proof}

\begin{rem}
If we assume that $\forall h \in \H \setminus \{0\}$, $\lambda^h_t>0$ $\forall t \in [0,T]$, instead of $(\ref{pilambda_h-positiva})$, then the condition $(\ref{positiva_h})$ is satisfied. Moreover, if in addition
\begin{equation*}
\widetilde{Z}_t=\mathcal{E}\left( - \int_0^t \frac{b_1(s)}{\sigma_1(s)}\ud W^1_s+\int_0^t\sum_{h\in\H\setminus\{0\}}\left(\frac{1}{\lambda^h_s}-1\right)[\ud N^h_s-\lambda_s^h \ud s]\right)
\end{equation*}
 is a $(P,\mathcal{F}_t)$-martingale, then $P_0$ coincides with the restriction on $\mathcal{F}^Y_T$, of the probability measure $\widetilde{P}_0$, equivalent to $P$ over $\mathcal{F}_T$ defined by $\ds \widetilde{Z}_t=\left.\frac{\ud \widetilde{P}_0}{\ud P}\right|_{\mathcal{F}_t}$.

\end{rem}

\subsection{Observation dynamics driven by independent point processes}

In this second example we consider the case where the observation dynamics is driven by independent point processes $N^i$ where only their intensities $\lambda^i$ are not observable as in the Example 3.15 in \cite{cc2011}.

In order to recall the model, we suppose that there exists a finite set of measurable functions $K^i_1(t,y)\neq 0$ for all $(t,y)\in [0,T]\times \mathbb{R}$,  for $i=1,...,n$, such that
\begin{equation*}
	d^1(t,x,y):=\{\zeta\in Z: K_1(t,x,y;\zeta)\neq 0\}=\bigcup_{i=1}^n d^1_i(t,x,y)\;\;\;\textrm{and}\;\;\;d^1_i(t,x,y)\cap d^1_j(t,x,y)=\emptyset \;\;\;\forall i\neq j
\end{equation*}
 where $d^1_i(t,x,y):=\{\zeta\in Z:K_1(t,x,y;\zeta)=K_1^i(t,y)\}$. Then define $\ds D^i_t=d^1_i(t,X_{t^-},Y_{t^-})$.

\medskip

In this framework the dynamics of $Y$ is
\begin{equation}\label{dinamica_di_Y}
dY_t=b_1(t,X_t,Y_t)\ud t + \sigma_1(t,Y_t)\ud W^1_t+\sum_{i=1}^n K^i_1(t,Y_{t^-}) \ud N^i_t
\end{equation}
where $N^i_t=N((0,t]\times D^i_t)$, for $i=1,...,n$, and so $Y$ turns to be driven by independent counting processes. It is easy to see that the integer-valued measure $m(\ud t,\ud z)$ can be written as

\[
m(\ud t,\ud z)=\sum_{s:\Delta Y_s\neq 0}\delta_{\{s,\Delta Y_s\}}(\ud t,\ud z)=\sum_{i=1}^n \delta_{K^i_1(t,Y_{t^-})}(\ud z) \ud N^i_t .
\]

Let us define the functions $\lambda^i(t,x,y):=\nu(d^1_i(t,x,y))$ for $i=1,...,n$ then the $(P,\mathcal{F}_t)$-intensities of each point process $N^i$ is given by $\lambda^i_t=\nu(D^i_t)$.
\vskip 3mm

For this model the $(P,\mathcal{F}_t)$-dual predictable projection of the measure $m(\ud t,\ud z)$ is

\begin{equation*}
\lambda_t\phi_t(\ud z)\ud t=\int_{D_t}\delta_{K_1(t,\zeta)}(\ud z)\nu(\ud\zeta)\ud t
                     =\sum_{i=1}^n\delta_{K^i_1(t,Y_{t^-})}(\ud z)\int_{D^i_t}\nu(\ud \zeta)\ud t
                     =\sum_{i=1}^n\delta_{K^i_1(t,Y_{t^-})}(\ud z)\lambda_t^i\ud t,
\end{equation*}

and the $(P,\mathcal{F}^Y_t)$-dual predictable projection is
\[
\nu^p(\ud t,\ud z)=\pi_{t^-}(\lambda \phi (\ud z))\ud t=\sum_{i=1}^n\delta_{K^i_1(t,Y_{t^-})}(\ud z)\pi_{t^-}(\lambda^i)\ud t.
\]

\medskip

The Kushner-Stratonovich equation solved by $\pi$ in this special case  is given by

\begin{equation}\label{filtro}
\begin{split}
\pi_t(f)= f(0,x_0)+\int_0^t\pi_s(L^Xf)\ud s+\int_0^t\sigma_1(s)^{-1}[\pi_s(b_1 f)-\pi_s(b_1)\pi_s(f)]+\rho\pi_s\left(\sigma_0\frac{\partial f}{\partial x}\right)\ud I_s+\\
+\sum_{i=1}^n\int_0^t(\pi_{s^-}(\lambda^i))^+ \left(\pi_{s^-}(\lambda^i f) -\pi_{s^-}(f)\pi_{s^-}(\lambda^i) +\pi_{s^-}(R^i f)\right)\left(\ud N^i_s-\pi_{s^-}(\lambda^i)\ud s\right),
\end{split}
\end{equation}

where, $\forall i=1,...,n$, $\ds R^i$  is the operator
\[
R^i f(t,x,Y_{t^-})=\int_{d^1_i(t,x,Y_{t^-})}\left[  f(t,x+K_0(\zeta;t))-f(t,x)\right] \nu(\ud\zeta)
\]

that takes into account common jump times between the signal $X$ and the point process $N^i$.

\medskip

This result has been obtained in \cite{cc2011} under the hypothesis that  $\lambda^i_t>0$ $P-a.s.$,  $\forall i = 1,...,n$, $\forall t \in [0,T]$. Nevertheless this condition can be weakened and the same result can be proved under the hypothesis that $\pi_t(\lambda^i)>0$ $P-a.s.$,  $\forall i = 1,...,n$, $\forall t \in [0,T]$,  thanks to Theorem $\ref{EQ}$.

\medskip

Now, assume that for $i=1,...,n$,

\begin{equation}\label{positiva}
\pi_{t^-}(\lambda^i)>0 \quad P-a.s.\quad \forall t\in[0,T],
\end{equation}

then the Assumption \ref{assumption_eta} is fulfilled since the $(P, \mathcal{F}^Y_t)$-dual predictable projection of the measure $m(\ud t,\ud z)$ turns to be equivalent to
$$\ds \eta(t, Y_{t^-}, \ud z):=\sum_{i=1}^n \delta_{K^i_1(t, Y_{t^-})}(\ud z).$$

\medskip

Observe that the measure $\eta(t, Y_{t^-}, \ud z)$ satisfies the condition $(\ref{integrabilita_eta})$, in fact
\[
\sum_{i=1}^n \delta_{K^1(t, Y_{t^-})}(\mathbb{R})=n.
\]

\medskip
We recall that the probability measure $P_0$ defined over $(\Omega, \mathcal{F}^Y_T)$, is such that $\widetilde{W}^1_t:=I_t+\int_0^t\pi_s\left(\frac{b_1}{\sigma_1}\right)\ud s$ becomes a $(P_0, \mathcal{F}^Y_t)$-Brownian motion, and the integer valued random measure $m(\ud t, \ud z)$ has $(P_0,\mathcal{F}^Y_t)$-dual predictable projection $\eta(t,Y_{t^-}, \ud z)$.
With this choice, each point process $N^i$ becomes a $(P_0, \mathcal{F}^Y_t)$-standard Poisson process, that is with intensity equal to 1, and the $\ds (P_0,\mathcal{F}^Y_t)$-intensity of the point processes $\ds N_t=\sum_{i=1}^n N^i_t$ becomes equal to $n$. Hence $P_0$ is given by

\begin{equation}\label{P0}
\left.\frac{\ud P_0}{\ud P}\right|_{\mathcal{F}^Y_t}=Z_t=\mathcal{E}\left(-\int_0^t \pi_s\left(\frac{b_1}{\sigma_1}\right)\ud I_s+\sum_{i=1}^n \int_0^t\left(\frac{1}{\pi_{s^-}(\lambda^i)}-1\right)(\ud N^i_s - \pi_{s^-}(\lambda^i) \ud s)\right).
\end{equation}

\medskip

For this model, the Protter-Shimbo condition is analogous to $(\ref{shimbo_2})$, found for the previous model.

\medskip

Let us compute the dynamics of $\vr(1)$. Recall that $\vr_t(1)=Z_t^{-1}:=\left.\frac{\ud P}{\ud P_0}\right|_{\mathcal{F}^Y_t}$, then, by considering the effects of the Girsanov change of measure on the processes we get the following result.
Under the assumption that $Z$ is a $(P,\mathcal{F}^Y_t)$-martingale  and $(\ref{positiva})$, the process $\vr(1)$ solves
\[
\ud \vr_t(1)=\vr_{t^-}(1)\left[\pi_t\left(\frac{b_1}{\sigma_1}\right)\ud \widetilde{W}^1_t+\sum_{i=1}^n(\pi_{t^-}(\lambda^i)-1)(\ud N^i_t-\ud t)\right].
\]

\medskip

Finally, the dynamics of the unnormalized filter can be deduced by Theorem $\ref{thm_zakai}$.

\begin{corollary}[The Zakai equation]\
Assume the hypotheses of Theorem $\ref{EQ}$ and $(\ref{positiva})$.
Let $P_0$ be the probability measure defined by $(\ref{P0})$ and $Z$ a $(P,\mathcal{F}^Y_t)$-martingale. Assume moreover $(\ref{hp_deboli2})$,
then, $\forall f \in \mathcal{C}_b^{1,2}([0,T]\times \mathbb{R})$, the unnormalized filter $\vr$ satisfies the Zakai equation

\begin{equation}\label{zakai_ni}
\begin{split}
\ud \vr_t (f) =\vr_t(L^X f)\ud t+\left[ \frac{\vr_t(b_1 f)}{\sigma_1(t)}+ \rho \vr_t\left(\sigma_0 \frac{\partial f}{\partial x}\right)\right]\ud \widetilde{W}^1_t  +\sum_{i=1}^n\left[\vr_{t^-}(\lambda^i f)-\vr_{t^-}(f)+\vr_{t^-}(R^i f)\right](\ud N^i_t-\ud t).
\end{split}
\end{equation}
\end{corollary}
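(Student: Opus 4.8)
The plan is to derive (\ref{zakai_ni}) directly as a specialization of the general Zakai equation established in Theorem~\ref{thm_zakai}. First I would check that the hypotheses of Theorem~\ref{thm_zakai} are all satisfied in the present model. The hypotheses of Theorem~\ref{EQ} and condition (\ref{hp_deboli2}) are assumed; Assumption~\ref{martingala_Z} holds because $Z$ is assumed to be a $(P,\mathcal{F}^Y_t)$-martingale; Assumption~\ref{assumption_eta} has already been verified above, since under (\ref{positiva}) the $(P,\mathcal{F}^Y_t)$-dual predictable projection $\pi_{t^-}(\lambda\phi(\ud z))=\sum_{i=1}^n\delta_{K^i_1(t,Y_{t^-})}(\ud z)\,\pi_{t^-}(\lambda^i)$ is equivalent to $\eta(t,Y_{t^-},\ud z)=\sum_{i=1}^n\delta_{K^i_1(t,Y_{t^-})}(\ud z)$, which is $\mathcal{F}^Y_t$-predictable and satisfies $\eta_t(\mathbb{R})=n$, hence (\ref{integrabilita_eta}). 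Moreover the measure $P_0$ in (\ref{P0}) is exactly the one obtained from (\ref{p0}), (\ref{num2}), (\ref{num3}) once $\Psi(t,z)$ is identified through $\pi_{t^-}(\lambda^i)=1+\Psi(t,K^i_1(t,Y_{t^-}))$, so that $\frac{1}{1+\Psi(t,z)}-1$ restricted to the $i$-th atom equals $\frac{1}{\pi_{t^-}(\lambda^i)}-1$. Therefore Theorem~\ref{thm_zakai} applies and $\vr$ solves (\ref{unnorm.filter}) for every $f\in\mathcal{C}_b^{1,2}([0,T]\times\mathbb{R})$.

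It then remains only to make the jump integral in (\ref{unnorm.filter}) explicit. Since $m(\ud t,\ud z)=\sum_{i=1}^n\delta_{K^i_1(t,Y_{t^-})}(\ud z)\,\ud N^i_t$ and $\eta_t(\ud z)=\sum_{i=1}^n\delta_{K^i_1(t,Y_{t^-})}(\ud z)$, while $\vr_{t^-}(\lambda\phi f)(\ud z)=\sum_{i=1}^n\delta_{K^i_1(t,Y_{t^-})}(\ud z)\,\vr_{t^-}(\lambda^i f)$ and, by definition of $\bar L$ and of $R^i$, $\vr_{t^-}(\overline{L} f)(\ud z)=\sum_{i=1}^n\delta_{K^i_1(t,Y_{t^-})}(\ud z)\,\vr_{t^-}(R^i f)$, the Radon--Nikodym derivatives $\frac{\ud\vr_{t^-}(\lambda\phi f)}{\ud\eta_t}(z)$ and $\frac{\ud\vr_{t^-}(\overline{L} f)}{\ud\eta_t}(z)$ evaluated at the atom $z=K^i_1(t,Y_{t^-})$ equal $\vr_{t^-}(\lambda^i f)$ and $\vr_{t^-}(R^i f)$ respectively, whereas $\vr_{t^-}(f)$ is unaffected. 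Substituting these into the integral with respect to $m(\ud t,\ud z)-\eta_t(\ud z)\ud t$ turns it into $\sum_{i=1}^n[\vr_{t^-}(\lambda^i f)-\vr_{t^-}(f)+\vr_{t^-}(R^i f)](\ud N^i_t-\ud t)$; leaving the drift term $\vr_t(L^X f)\ud t$ and the $\widetilde{W}^1$ term of (\ref{unnorm.filter}) untouched, we recover (\ref{zakai_ni}).

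The only point requiring a little care is the identification of the Radon--Nikodym derivatives with the atomic coefficients: one should treat $\{K^i_1(t,Y_{t^-})\}_{i=1,\dots,n}$ as a disjoint family, so that the derivative at each atom selects a single index; if two of the functions $K^i_1,K^j_1$ happen to coincide at $(t,Y_{t^-})$ one merely groups the corresponding atoms, which leaves the final sum unchanged. All integrability requirements, and hence the local-martingale property of the two stochastic integrals in (\ref{zakai_ni}), follow from the Remark after Theorem~\ref{thm_zakai}, using (\ref{conti}), (\ref{hp_deboli2}) and $\eta_t(\mathbb{R})=n$, so no additional estimates are needed.
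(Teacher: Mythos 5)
Your proposal is correct and follows exactly the route the paper intends: the corollary is stated as a direct specialization of Theorem~\ref{thm_zakai}, and you verify Assumptions~\ref{assumption_eta} and \ref{martingala_Z} for this model, identify $P_0$ in (\ref{P0}) with the general construction (\ref{p0})--(\ref{num3}), and then evaluate the Radon--Nikodym derivatives against the purely atomic measure $\eta_t(\ud z)=\sum_{i=1}^n\delta_{K^i_1(t,Y_{t^-})}(\ud z)$ to turn the jump integral into the sum over $i$ in (\ref{zakai_ni}). The observation that the disjointness of the sets $d^1_i$ forces the atoms $K^i_1(t,Y_{t^-})$ to be distinct (or trivially groupable) is a correct and welcome precision.
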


\medskip

We conclude with a strong uniqueness result, stated in the corollary below.

\begin{corollary}[Uniqueness for the Zakai equation]\label{processi_di _punto}
Let $(X,Y)$ be the usual partially observed system $(\ref{sistema})$, where in particular the dynamics of $Y$ is given by $(\ref{dinamica_di_Y})$. Assume that for $\quad i=1,...,n$,
\begin{equation}\label{positiva2}
\vr_{t^-}(\lambda^i)>0 \quad P-a.s. \quad \forall t\in[0,T],
 \end{equation}
 $(\ref{assumption_c})$ and either $(\ref{g1})$ or $(\ref{g2})$. Let $\xi$ be a strong solution to the Zakai such that
\begin{equation}\label{positiva_i} \xi_{t^-}(\lambda^i)>0 \quad P-a.s. \quad \forall t\in[0,T], \quad i=1,...,n,\end{equation}
then $\xi_t(f)=\vr_t(f)$ $P_0-a.s.$ $\forall t\in[0,T]$.
\end{corollary}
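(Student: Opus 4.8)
The plan is to read this off from Theorem \ref{unicita_zakai}, specialising the argument used for Corollary \ref{zakai-uniq2}. First I would recall the explicit shape of the relevant predictable projections for this model: as computed just before the statement,
$$
\vr_{t^-}(\lambda\phi(\ud z))=\sum_{i=1}^{n}\delta_{K^i_1(t,Y_{t^-})}(\ud z)\,\vr_{t^-}(\lambda^i),
$$
and, since $\xi$ is a strong solution of the Zakai equation written for the dynamics $(\ref{dinamica_di_Y})$ — so that $\xi_{t^-}(\lambda\phi(\ud z))$ is again carried by the atoms $K^1_1(t,Y_{t^-}),\dots,K^n_1(t,Y_{t^-})$ —
$$
\xi_{t^-}(\lambda\phi(\ud z))=\sum_{i=1}^{n}\delta_{K^i_1(t,Y_{t^-})}(\ud z)\,\xi_{t^-}(\lambda^i).
$$

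Next I would invoke the positivity hypotheses $(\ref{positiva2})$ and $(\ref{positiva_i})$: since $\vr_{t^-}(\lambda^i)>0$ and $\xi_{t^-}(\lambda^i)>0$ $P$-a.s.\ for every $i=1,\dots,n$ and every $t\in[0,T]$, the two atomic measures above have the same (finitely many, $\mathcal{F}^Y_t$-predictable) atoms with strictly positive weights, hence they are mutually absolutely continuous, with
$$
\frac{\ud\xi_{t^-}(\lambda\phi)}{\ud\vr_{t^-}(\lambda\phi)}(z)=\sum_{i=1}^{n}\ind_{\{z=K^i_1(t,Y_{t^-})\}}\frac{\xi_{t^-}(\lambda^i)}{\vr_{t^-}(\lambda^i)},\qquad\frac{\ud\vr_{t^-}(\lambda\phi)}{\ud\xi_{t^-}(\lambda\phi)}(z)=\sum_{i=1}^{n}\ind_{\{z=K^i_1(t,Y_{t^-})\}}\frac{\vr_{t^-}(\lambda^i)}{\xi_{t^-}(\lambda^i)}.
$$
Consequently the measures $\xi_{t^-}(\lambda\phi(\ud z))\,\ud t$ and $\vr_{t^-}(\lambda\phi(\ud z))\,\ud t$ on $[0,T]\times\mathbb{R}$ are equivalent. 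With this equivalence at hand, and with $(\ref{assumption_c})$ together with either $(\ref{g1})$ or $(\ref{g2})$ (and the hypotheses of Theorem \ref{thm_zakai}) in force, Theorem \ref{unicita_zakai} yields $\xi_t=\vr_t$ $P$-a.s.\ for all $t\le T$; since $P_0$ and $P$ are equivalent this is also $\xi_t=\vr_t$ $P_0$-a.s., which is the assertion.

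I do not anticipate any real difficulty here: the argument is a direct specialisation of Theorem \ref{unicita_zakai}, the only substantive input being the identification of the supports of the two jump intensities, which the structure $(\ref{dinamica_di_Y})$ makes transparent. The one point to keep in mind is that the values $K^i_1(t,Y_{t^-})$, $i=1,\dots,n$, are distinct on the set where the corresponding $D^i_t$ are non-empty — this is forced by the disjointness $d^1_i\cap d^1_j=\emptyset$ for $i\neq j$ — so that the displayed Radon--Nikodym derivatives are well defined, exactly as in the countable-jump-size case treated in Corollary \ref{zakai-uniq2}.
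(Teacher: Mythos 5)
Your proposal is correct and follows essentially the same route as the paper: identify both $\xi_{t^-}(\lambda\phi(\ud z))$ and $\vr_{t^-}(\lambda\phi(\ud z))$ as atomic measures supported on the atoms $K^i_1(t,Y_{t^-})$, use the positivity hypotheses $(\ref{positiva2})$ and $(\ref{positiva_i})$ to conclude that these measures are equivalent, and then invoke Theorem~\ref{unicita_zakai}. The extra details you supply (the explicit Radon--Nikodym derivatives and the remark on the distinctness of the $K^i_1(t,Y_{t^-})$, forced by the disjointness of the $d^1_i$) are a harmless elaboration of what the paper leaves implicit.
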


\begin{proof}
As in the proof of Corollary $\ref{zakai-uniq2}$, it is sufficient to verify that, under $(\ref{positiva2})$ and $(\ref{positiva_i})$, the measures $\ds \xi_{t^-}(\lambda \phi(\ud z))=\sum_{i=1}^n \xi_{t^-}(\lambda^i) \delta_{K^i_1(t, Y_{t^-})}(\ud z)$ and $\ds \vr_{t^-}(\lambda \phi(\ud z))=\sum_{i=1}^n\vr_{t^-}(\lambda^i) \delta_{K^i_1(t, Y_{t^-})}(\ud z)$ are equivalent. Then the result follows from Theorem $\ref{unicita_zakai}$.
\end{proof}

\subsection{State process given by a Pure Jump Process}

Suppose that $X$ is pure jump process taking values in a countable space $S$. For such a particular case, we will show directly that pathwise uniqueness for the solutions to the Zakai equation holds, since it can be solved recursively.

\medskip

We can represent the process $X$ as the solution to the following equation
\begin{equation}
X_t=X_0+\int_0^t\sum_{u\in S}\sum_{v\in S} \ind_{\{X_{s^-}=u\}}(v-u)N(\ud s; u,v ),
\end{equation}
here $\forall (u,v)\in S \times S$, $N(t; u,v)$ is a Poisson process with parameter  $\lambda_0(u)\mu_0(u,v)$, where $\lambda_0:S\rightarrow [0,\infty)$ is a measurable function and  $\ds \forall u$, $\ds \mu_0(u,\cdot)$  is a probability measure over $S$.
This notation corresponds to the choice
$\ds K_0(t,X_{t},\zeta)=\ind_{\{X_{t}=u\}}(v-u)$, $\zeta=(u,v)$ and $\nu( \{\zeta\})=\lambda_0(u)\mu_0(u, v)$.
\medskip

If we apply It\^{o}'s formula to the function $f\in\mathcal{C}^{1,2}([0,T]\times \mathbb{R})$, we can get
\begin{equation*}
\begin{split}
\ud f(t,X_t)=&\frac{\partial f}{\partial t}(t,X_t)\ud t + \sum_{u\in S}\sum_{v\in S} \ind_{\{X_{s^-}=u\}} \left\{f(t,v)-f(t,u)\right\} \left[N(\ud s; u,v )-\lambda_0(u)\mu_0(u,v)\ud t\right]\\
            & + \lambda_0(X_{t^-})\sum_{v\in S} \left\{f(t,v)-f(t,X_{t^-})\right\}\mu_0(X_{t^-},v)\ud t,
\end{split}
\end{equation*}

then the generator of the Markov process $X$ is

$$
L^X f(t,x)=\frac{\partial f}{\partial t}(t,x)+\lambda_0(x)\sum_{v\in S} \left\{f(t,v)-f(t,x)\right\}\mu_0(x,v).
$$

In particular the operator $L^X_0$ is given by

$$
L_0^X f(t,x)=\frac{\partial f}{\partial t}(t,x)+\lambda_0(x)\sum_{v\in S_1(x,y)} \left\{f(t,v)-f(t,x)\right\}\mu_0(x,v),
$$
where $S_1(x,y):=\left\{v \in S: v\neq x ,\quad K_1(t,x,y;x,v)= 0 \right\}$, and the operator $\overline{L}f(z)$, which takes into account common jump times between the processes $X$ and $Y$ is
$$
\overline{L}_tf (\ud z)=\overline{L} f(t,X_{t^-},Y_{t^-},\ud z)=\lambda_0(X_{t^-})\sum_{v\neq X_{t^-}} \left[f(t,v)-f(t,X_{t^-})\right]\mu_0(X_{t^-},v)\delta_{K_1(t, X_{t^-}, Y_{t^-}; X_{t^-},v)}(\ud z).
$$

In this situation the Zakai equation is given by

\begin{equation*}
\begin{aligned}[t]
&\vr_t(f)=\int_0^t \left[\vr_{s}(L^X_0f)- \vr_{s}(\lambda f)+\eta_s(\mathbb{R})\vr_{s}(f)\right]\ud s + \int_0^t \frac{\vr_s(b_1 f)}{\sigma_1(s)} \ud \widetilde{W}^1_s \\
&\qquad + \int_0^t\int_{\mathbb{R}}\left[\frac{\ud \vr_{s^-}(\lambda \phi f)}{\ud \eta_s}(z)-\vr_{s^-}(f)+\frac{\ud \vr_{s^-}(\overline{L}f)}{\ud \eta_s}(z)\right]m(\ud s, \ud z).
\end{aligned}
\end{equation*}


\medskip

\begin{proposition}
Let the state process $X$ be a purely jump process taking values in a finite space $S$. Assume existence and uniqueness for the solution of the system $(\ref{sistema})$, then pathwise uniqueness for the solution to the Zakai equation holds.

\end{proposition}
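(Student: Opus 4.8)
The plan is to use the finiteness of $S$ to reduce the Zakai equation to a finite system of stochastic equations and then to solve this system recursively along the jump times of $Y$, as announced before the statement. Since $X$ is pure jump we have $\sigma_0\equiv 0$, so every term $\rho\,\xi_t(\sigma_0\,\partial f/\partial x)$ vanishes, and the operators $L^X$, $L^X_0$ and $\overline L$ entering the Zakai equation involve only the time derivative and evaluations at points of $S$; the Zakai equation for this model is therefore posed for an $\mathcal M(S)$-valued process $\xi$, which, $S$ being finite, is completely described by the vector $\mathbf v_t:=(\xi_t(\{e\}))_{e\in S}\in\R^{|S|}_{\geq 0}$. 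Testing equation $(\ref{num4})$ against $f_e:=\ind_{\{\,\cdot\,=e\}}$, $e\in S$ (legitimate here, since only the values of the test function on $S$ enter the equation), and using that $\xi_t(\lambda f_e)=\lambda(t,e,Y_{t^-})\,\xi_t(\{e\})$, $\sigma_1^{-1}(t)\,\xi_t(b_1 f_e)=\sigma_1^{-1}(t)\,b_1(t,e,Y_t)\,\xi_t(\{e\})$ and that $\xi_t(L_0^X f_e)$ is linear in $(\xi_t(\{u\}))_{u\in S}$ with coefficients built from the bounded functions $\lambda_0,\mu_0$, one obtains, between jump times, a closed linear system
\[
\ud\mathbf v_t=G_t\,\mathbf v_t\,\ud t+H_t\,\mathbf v_t\,\ud\widetilde W^1_t,
\]
whose $\mathcal F^Y_t$-adapted matrix coefficients $G_t$ (from $L^X_0$, $\lambda(t,\cdot,Y_{t^-})$ and $\eta_t(\mathbb R)$) and $H_t$ (diagonal, with entries $\sigma_1^{-1}(t)\,b_1(t,e,Y_t)$) do not depend on $\xi$. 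The integrability condition $(\ref{integrabilita_b2_zakai})$ in the definition of a strong solution, the bound $(\ref{integrabilita_eta})$ on $\eta_t(\mathbb R)$ and the standing assumptions on $b_1/\sigma_1$, together with the finiteness of $S$, give $\int_0^T(\|G_t\|+\|H_t\|^2)\,\ud t<\infty$ $P_0$-a.s.

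Next I would set up the interlacing. Let $0=T_0<T_1<T_2<\cdots$ be the jump times of $Y$, that is, the atoms of $m$; by Assumption $\ref{hp_ks}$ the point process $N$ is non-explosive, so on $[0,T]$ there are only finitely many $T_n$, $P$-a.s., hence also $P_0$-a.s.\ since $P_0\sim P$ on $\mathcal F^Y_T$. On each stochastic interval $[T_n,T_{n+1})$ the measure $m$ charges nothing, so the jump integral in $(\ref{num4})$ vanishes and $\mathbf v$ solves the linear SDE above, driven by the $(P_0,\mathcal F^Y_t)$-Brownian motion $\widetilde W^1$; at $t=T_{n+1}$ equation $(\ref{num4})$ prescribes
\[
\xi_{T_{n+1}}(f_e)=\frac{\ud\xi_{T_{n+1}^-}(\lambda\phi f_e)}{\ud\eta_{T_{n+1}}}(Z_{n+1})+\frac{\ud\xi_{T_{n+1}^-}(\overline L f_e)}{\ud\eta_{T_{n+1}}}(Z_{n+1}),\qquad Z_{n+1}:=\Delta Y_{T_{n+1}},
\]
so that $\mathbf v_{T_{n+1}}$ is an explicit measurable function $\Upsilon(\mathbf v_{T_{n+1}^-};T_{n+1},Z_{n+1})$ of the pre-jump value and of the observed mark (the Radon--Nikodym derivatives being well defined because $\xi_{t^-}(\lambda\phi(\ud z))\ll\eta_t(\ud z)$ by the definition of a strong solution).

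Finally I would argue by induction on $n$. Let $\xi^1,\xi^2$ be two strong solutions with the same initial value $\xi^1_0=\xi^2_0=\delta_{x_0}$, and suppose $\xi^1_t=\xi^2_t$ for all $t\in[0,T_n]$. On $[T_n,T_{n+1})$ the difference $\Delta_t:=\mathbf v^1_t-\mathbf v^2_t$ solves the same linear SDE with $\Delta_{T_n}=0$; the standard localisation--It\^o--Gronwall argument (stop at $\tau_m:=\inf\{t>T_n:\int_{T_n}^t(\|G_s\|+\|H_s\|^2)\,\ud s>m\}\wedge T_{n+1}$, apply It\^o's formula to $|\Delta_t|^2$, take $\mathbb E^{P_0}$ to remove the local martingale part, and apply Gronwall's lemma) gives $\mathbb E^{P_0}|\Delta_{t\wedge\tau_m}|^2=0$, and letting $m\to\infty$ --- licit because $\tau_m\uparrow T_{n+1}$ by the integrability noted above --- yields $\Delta\equiv 0$ on $[T_n,T_{n+1})$, hence $\mathbf v^1_{T_{n+1}^-}=\mathbf v^2_{T_{n+1}^-}$ by taking left limits and then $\mathbf v^1_{T_{n+1}}=\mathbf v^2_{T_{n+1}}$ via the map $\Upsilon$. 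Since there are finitely many jump times in $[0,T]$, the induction terminates and $\xi^1\equiv\xi^2$ on $[0,T]$; in particular every strong solution coincides with $\vr$, which is the asserted pathwise uniqueness. The delicate point is exactly the one just flagged: securing the $P_0$-a.s.\ bound $\int_0^T(\|G_t\|+\|H_t\|^2)\,\ud t<\infty$ that allows passing to the limit in $m$, which is where the integrability conditions on a strong solution, the hypotheses on $b_1/\sigma_1$ and on $\eta_t(\mathbb R)$, and the finiteness of $S$ all enter; the interlacing and the Gronwall step themselves are routine.
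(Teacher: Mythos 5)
Your proposal is correct and follows essentially the same route as the paper: reduce the Zakai equation, via the test functions $f_u=\ind_{\{\cdot=u\}}$, to a finite linear system for $(\xi_t(\{u\}))_{u\in S}$ driven by $\widetilde W^1$ between the (a.s.\ finitely many) jump times of $Y$, with an explicit linear update of the vector at each jump determined by $(T_n,Z_n)$, and conclude recursively. The only difference is that you spell out the uniqueness of the intermediate linear SDE system via localisation and It\^o--Gronwall, a step the paper simply asserts ("which has a unique solution"); this is standard for linear equations with adapted, pathwise locally integrable coefficients and adds rigour rather than changing the argument.
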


\begin{proof}
We prove that, whenever the space of the values of the process $X$ is finite, the Zakai equation can be computed recursively.

\medskip
Note that $\vr_t(f)=\sum_{u\in S}f(t,u)V(t,u)$, where $V(t,u):=\vr_t(f_u)$ and $f_u(x)=\ind_{\{x=u\}}$. For $t\in [T_n, T_{n+1})$, $\{ V(t,u) \}_{u\in S}$ solves
\begin{equation}\label{sistema1}
\begin{split}
V(t,u)= &\!\!\!\int_{T_n}^t \Big\{  [\eta_s(\mathbb{R})-\lambda_0(u)\mu_0(u,S_1(u,Y_s))-\lambda(s,u,Y_{s^-})]  V(s,u) + \!\!\!\sum_{v\in S_1(u,Y_{s^-})} \!\!\!\!\! \! \lambda_0(v)\mu_0(v,u)V(s,v) \Big\}\ud s +
 \\
 & \quad + \int_{T_n}^t\frac{b_1(s,u,Y_{s})}{\sigma_1(s,Y_s)}  V(s,u) \ud\widetilde{W}^1_s.
\end{split}
\end{equation}

 where, as usual, $\{T_n\}$ denotes the sequence of jump times of $Y$. Moreover, for  $t=T_n$
\[
\begin{aligned}[t]
&V(T_n,u)=\Big\{ \lambda(T_n,u,Y_{T_n^-}) \frac{\ud \phi_{T_n}(u)}{\ud \eta_{T_n}}(Z_n) -  1 +   \frac{\ud \beta_{T_n}(u)}{\ud \eta_{T_n}}(Z_n)  \Big \} V(T_n^-,u)\\
&\qquad \qquad + \sum_{v\in S} \frac{\ud \alpha_{T_n}(u, v)}{\ud \eta_{T_n}}(Z_n) V(T_n^-,v) \Big\},
\end{aligned}
\]

where

$$\beta_t(u, Y_{t^-}, \ud z) := \lambda_0(u) \sum_{v\in S} \mu_0(u,v)\delta_{K_1(t,  u, Y_{t^-};  u,v)}(\ud z), $$

$$\alpha_t(u, v, Y_{t^-}, \ud z) := \sum_{v\in S} \lambda_0(v)  \mu_0(v,u)\delta_{K_1(t,  v, Y_{t^-};  v,u)}(\ud z).$$

Hence $\{ V(T_n,u) \}_{u\in S}$  is completely determined by the observed data $(T_n, Z_n)$ and by the knowledge of $\{ V(t,u) \}_{u\in S}$  for all $t\in[T_{n-1}, T_n)$, since  $\forall u \in S$, $V(T_n^-, u)  = \lim_{t \to  T_n^-} V(t, u)$.
\medskip

Therefore, if $S$ is finite, the unnormalized filter can be computed by solving the system of linear stochastic differential equations (\ref{sistema1}) between two consecutive jump times of $Y$, which has a unique solution.

\end{proof}

\begin{rem}
If the state process $X$ and the observation process $Y$ have only common jump times, that is $S_1=\emptyset$, the equation between two consecutive jump times becomes

\[
V(t,u)= \!\!\!\int_{T_n}^t \! V(s,u)[\eta_s(\mathbb{R})-\lambda(s,u,Y_{s^-})] \ud s+ \int_{T_n}^t\frac{b_1(s,u,Y_{s})}{\sigma_1(s,Y_s)}V(s,u)\ud\widetilde{W}^1_s.
\]
Thus, as in the model considered in \cite{CG1}, the unnormalized filter can be computed recursively even if the set $S$ is countable.
\end{rem}

\appendix
\begin{center}
\Large{\textbf{Appendix}}
\end{center}



\section{}\label{appc}

Below we recall the statement of Proposition III.3.5 proved in \cite{JS}, that we used to prove that uniqueness for the Kushner-Stratonovich equation implies uniqueness for the Zakai one. This proposition supplies a useful property of the density process.

\begin{proposition}
Let $P$ and $P'$ be two probability measures over $(\Omega, \mathcal{F})$.
Assume that $P'\stackrel{loc}{<<}P$ and let $Z$ be the density process.
Then $\ds P'\left(\inf_{t\geq0}Z_t>0\right)=1$.
\end{proposition}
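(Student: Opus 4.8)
The plan is to prove the statement using nothing but the defining relation of the density process, $P'|_{\mathcal{F}_t}=Z_t\,P|_{\mathcal{F}_t}$ for every $t\ge 0$, together with the elementary observation that, under $P'\stackrel{loc}{<<}P$, the (c\`{a}dl\`{a}g) process $Z$ is a nonnegative $(P,\mathcal{F}_t)$-martingale with $\mathbb{E}^P[Z_t]=1$: indeed, for $s<t$ and $A\in\mathcal{F}_s$ one has $\mathbb{E}^P[Z_t\ind_A]=P'(A)=\mathbb{E}^P[Z_s\ind_A]$, which is exactly the martingale identity (and $A=\Omega$ gives $\mathbb{E}^P[Z_t]=1$). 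Morally, the claim is then just a maximal inequality for the nonnegative martingale $Z$, read off under $P'$.

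First I would fix $n\ge 1$ and introduce the level-crossing stopping time $T_n:=\inf\{t\ge 0:\ Z_t\le 1/n\}$, which is a genuine $\{\mathcal{F}_t\}$-stopping time under the usual conditions. Right-continuity of $Z$ gives $Z_{T_n}\le 1/n$ on $\{T_n<\infty\}$ (pick a sequence decreasing to $T_n$ along which $Z$ does not exceed $1/n$ and pass to the limit). Then, for an arbitrary fixed $t\ge 0$, since $\{T_n\le t\}\in\mathcal{F}_t$ the density relation gives $P'(T_n\le t)=\mathbb{E}^P[Z_t\ind_{\{T_n\le t\}}]$; applying optional sampling to the bounded stopping time $T_n\wedge t$ (note $\{T_n\le t\}\in\mathcal{F}_{T_n\wedge t}$ and $Z_{T_n\wedge t}=Z_{T_n}$ there), $\mathbb{E}^P[Z_t\ind_{\{T_n\le t\}}]=\mathbb{E}^P[Z_{T_n\wedge t}\ind_{\{T_n\le t\}}]=\mathbb{E}^P[Z_{T_n}\ind_{\{T_n\le t\}}]\le \tfrac1n\,P(T_n\le t)\le\tfrac1n$. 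Letting $t\uparrow\infty$ I obtain $P'(T_n<\infty)\le 1/n$.

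To conclude, I would note that $\inf_{t\ge 0}Z_t=\inf_{t\in\mathbb{Q},\,t\ge0}Z_t$ by right-continuity, so the event in question is measurable, and that on $\{\inf_{t\ge 0}Z_t=0\}$ there is, for each $n$, some finite $t$ with $Z_t<1/n$, hence $T_n<\infty$. Therefore $\{\inf_{t\ge 0}Z_t=0\}\subseteq\{T_n<\infty\}$ for every $n$, so $P'(\inf_{t\ge 0}Z_t=0)\le 1/n$ for all $n$, which forces $P'(\inf_{t\ge 0}Z_t=0)=0$, i.e. $P'(\inf_{t\ge 0}Z_t>0)=1$.

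I do not expect a serious obstacle: the only points needing care are that $Z$ is a true $P$-martingale on all of $[0,\infty)$ and not merely a supermartingale (immediate from local absolute continuity, as noted above), and the right-continuity argument guaranteeing $Z_{T_n}\le 1/n$ on $\{T_n<\infty\}$ — for which it is cleanest to work with the hitting time $\inf\{t:Z_t\le1/n\}$ rather than a condition on $Z_{t^-}$, so as not to worry about a left limit that drops below $1/n$ without the value being attained. If one also wished to obtain $Z_{t^-}>0$ for all $t$ (the full statement of the proposition in \cite{JS}), it would suffice to repeat the argument with $T_n':=\inf\{t:\ Z_t\wedge Z_{t^-}\le 1/n\}$, using $Z_{T_n'}\wedge Z_{T_n'^-}\le1/n$ and the same optional-sampling estimate; but for the use made of the proposition in this paper the version above is all that is required.
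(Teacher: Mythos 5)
Your proof is correct: the paper does not actually prove this proposition but simply recalls it from Jacod--Shiryaev (Proposition III.3.5), and your argument --- the hitting times $T_n=\inf\{t:Z_t\le 1/n\}$, right-continuity to get $Z_{T_n}\le 1/n$ on $\{T_n<\infty\}$, optional sampling for the nonnegative $P$-martingale $Z$ to conclude $P'(T_n<\infty)\le 1/n$, and the inclusion $\{\inf_{t\ge0}Z_t=0\}\subseteq\bigcap_n\{T_n<\infty\}$ --- is essentially the standard proof given there. No gaps.
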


\section{}\label{appb}

\begin{proposition}\label{sup}
Let $\theta$ be the process defined in $(\ref{theta})$, where the process $\mu$ is a strong solution to the Kushner-Stratonovich equation such that $\mu_{t^-}(\lambda \phi(\ud z))$ is equivalent to the measure $\pi_{t^-}(\lambda \phi(\ud z))$ for every $t\in[0,T]$ and assume the hypotheses of Theorem $\ref{thm_zakai}$. Then $\sup_t \theta_t <\infty $ $P_0-a.s.$
\end{proposition}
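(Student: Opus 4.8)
The plan is to recognize $\theta$ as a strictly positive local martingale under $P_0$ and then to apply the maximal inequality for nonnegative supermartingales. First I would rewrite the linear equation $(\ref{theta})$ as $\theta=\mathcal{E}(M)$, the Dol\'eans--Dade exponential of
\[
M_t:=\int_0^t\mu_s\!\left(\frac{b_1}{\sigma_1}\right)\ud\widetilde{W}^1_s+\int_0^t\!\!\int_{\mathbb{R}}\left(\frac{\ud\mu_{s^-}(\lambda\phi)}{\ud\eta_s}(z)-1\right)\bigl(m(\ud s,\ud z)-\eta_s(\ud z)\ud s\bigr).
\]
The Radon--Nikodym derivative appearing here is well defined and strictly positive $P_0$-a.s., since by hypothesis $\mu_{t^-}(\lambda\phi(\ud z))$ is equivalent to $\pi_{t^-}(\lambda\phi(\ud z))$, which by Assumption~$\ref{assumption_eta}$ is equivalent to $\eta_t(\ud z)$; consequently $\Delta M_t=\frac{\ud\mu_{t^-}(\lambda\phi)}{\ud\eta_t}(\Delta Y_t)-1>-1$ at every jump time of $Y$.

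Next I would check that $M$ is a genuine $(P_0,\mathcal{F}^Y_t)$-local martingale. Its continuous part has predictable quadratic variation $\int_0^T(\mu_s(b_1/\sigma_1))^2\ud s\le\int_0^T\mu_s^2|b_1/\sigma_1|\,\ud s$, which is finite $P$-a.s., hence $P_0$-a.s. (as $P_0\sim P$), by the integrability $(\ref{integrabilita_b2})$ built into the notion of a strong solution of the KS-equation. For the jump part,
\[
\int_0^T\!\!\int_{\mathbb{R}}\left|\frac{\ud\mu_{s^-}(\lambda\phi)}{\ud\eta_s}(z)-1\right|\eta_s(\ud z)\,\ud s\le\int_0^T\bigl\{\mu_{s^-}(\lambda)+\eta_s(\mathbb{R})\bigr\}\ud s<\infty\quad P_0\text{-a.s.},
\]
using $\lambda(t,\cdot,Y_{t^-})\le b_2(t,\cdot)$ together with $(\ref{integrabilita_b2})$ for the first summand and $(\ref{integrabilita_eta})$ for the second. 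Hence $\theta=\mathcal{E}(M)$ is a local martingale under $P_0$, and since $\Delta M>-1$ the exponential formula $\theta_t=\exp\{M_t-\tfrac12\langle M^c\rangle_t\}\prod_{s\le t}(1+\Delta M_s)e^{-\Delta M_s}$ shows $\theta_t>0$. A nonnegative local martingale is a supermartingale, so $\theta$ is a $(P_0,\mathcal{F}^Y_t)$-supermartingale with $\theta_0=1$, and the maximal inequality for nonnegative supermartingales gives, for every $\lambda>0$,
\[
P_0\!\left(\sup_{0\le t\le T}\theta_t\ge\lambda\right)\le\frac{\mathbb{E}^{P_0}[\theta_0]}{\lambda}=\frac1\lambda .
\]
Letting $\lambda\to\infty$ yields $P_0(\sup_{0\le t\le T}\theta_t=\infty)=0$, i.e. $\sup_t\theta_t<\infty$ $P_0$-a.s.

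The only genuinely delicate point is the bookkeeping in the middle step: one must carefully transfer the $P$-a.s. integrability bounds that define a strong solution into $P_0$-a.s. bounds and combine them with $(\ref{integrabilita_eta})$, so as to be sure both that $M$ is a bona fide $(P_0,\mathcal{F}^Y_t)$-local martingale and that $\theta$ never touches $0$. Once $\theta$ is known to be a nonnegative supermartingale the conclusion is immediate, so I do not expect any obstacle beyond this routine verification.
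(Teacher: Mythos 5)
Your proposal is correct and follows essentially the same route as the paper: identify $\theta$ as the Dol\'eans--Dade exponential of the $(P_0,\mathcal{F}^Y_t)$-local martingale $M$, use the equivalence of $\mu_{t^-}(\lambda\phi(\ud z))$ and $\eta_t(\ud z)$ (both being equivalent to $\pi_{t^-}(\lambda\phi(\ud z))$) to get strict positivity, conclude that $\theta$ is a nonnegative supermartingale with $\mathbb{E}^{P_0}[\theta_0]=1$, and apply the maximal inequality. Your extra verification of the integrability making $M$ a bona fide local martingale is a welcome addition that the paper leaves implicit, but the argument is the same.
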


\begin{proof}
Observe that the Radon-Nikodym derivative $\ds \frac{\ud \mu_{t^-}(\lambda \phi)}{\ud \eta_t}(z)$ is well defined; in fact the measures $\ds \mu_{t^-}(\lambda \phi (\ud z))$ and $\ds \eta(t,Y_{t^-}, \ud z)$ are equivalent since they are both equivalent to $ \pi_{t^-}(\lambda \phi (\ud z))$.

Therefore the process $\theta$ is the Dol\'{e}ans Dade exponential of the $(P_0, \mathcal{F}^Y_t)$-martingale
\begin{equation*}
M_t= \int_0^t \mu_s\left(\frac{b_1}{\sigma_1}\right)\ud \widetilde{W}^1_s + \int_0^t \int_{\R} \left(\frac{\ud\mu_{s^-}(\lambda\phi)}{\ud\eta_s }(z)-1\right) \left[m(\ud s, \ud z) - \eta_s (\ud z) \ud s\right].
\end{equation*}
Since $\ds \mu_{t^-}(\lambda \phi (\ud z))$ and $\ds \eta(t,Y_{t^-}, \ud z)$ are equivalent, i.e. $\ds \frac{\ud\mu_{t^-}(\lambda\phi)}{\ud\eta_t }(z)>0$ $P_0-a.s.$ $\forall (t,z) \in [0,T]\times \mathbb{R}$, then $\theta$ is a strictly positive $(P_0, \mathcal{F}^Y_t)$-local martingale, which means in particular that $\theta$ is a supermartingale with $\mathbb{E}^{P_0}[\theta_t]\leq \mathbb{E}^{P_0}[\theta_0]=1$, $\forall t \in [0,T]$.

Recall that, by the Markov inequality,
\begin{equation}
P_0\left(\sup_t \theta_t \geq N \right)\leq \frac{1}{N}\xrightarrow[N\to\infty]{}0
\end{equation}

Then
\begin{eqnarray}
P_0\left(\sup_t \theta_t = \infty\right)=P_0\left(\bigcap_{N\geq 1}    \left\{   \sup_t \theta_t \geq N  \right\}   \right) \label{prima}\nonumber\\
                       =   P_0\left(\lim_{N\to\infty}    \left\{   \sup_t \theta_t \geq N  \right\}   \right)  \label{seconda}\\
                       \leq  \lim_{N\to\infty}   P_0\left(  \sup_t \theta_t \geq N    \right)=0  \label{terza}
\end{eqnarray}

where $(\ref{seconda})$ comes from the fact that the sequence events $\ds A_N:= \left\{   \sup_t \theta_t \geq N  \right\}$ is decreasing, that is $A_N \subseteq A_{N-1}$, and $(\ref{terza})$ comes from the Fatou Lemma.

\end{proof}

\section{}\label{appendixC}

In this last part of the appendix, we will give sufficient conditions (see for instance \cite{CG5} and \cite{GS}) which ensure
strong existence and strong uniqueness  for  solutions to the system $(\ref{sistema})$.

\medskip

\begin{ass}\label{assumption_c}
\begin{description}
  \item[(i)] Let $b_0(t,x),b_1(t,x,y)$, $\sigma_0(t,x)$, and $\sigma_1(t,y)$ be jointly continuous functions of their arguments, and  $K_0(t,x;\zeta),K_1(t,x,y;\zeta)$ $\mathbb{R}-$valued, jointly continuous  functions in $(t,x,y)$.

  \item[(ii)] Suppose there exists a constant $C>0$ such that $\forall t \in [0,T]$

\begin{equation}\label{crescita}
\begin{split}
|b_0(t,x)|^2\leq C(1+|x|^2) ;  & \qquad |\sigma_0(t,x)|^2\leq C(1+|x|^2)\\
|b_1(t,x,y)|^2\leq C(1+|x|^2+|y|^2) ; & \qquad|\sigma_1(t,y)|^2\leq C(1+|y|^2)\\
\int_Z|K_0(t,x;\zeta)|^2\nu(\ud\zeta)\leq C(1+|x|^2);  & \qquad\int_Z|K_1(t,x,y;\zeta)|^2\nu(\ud\zeta)\leq C(1+|x|^2+|y|^2)
\end{split}
\end{equation}

\item[(iii)] $\forall r>0$, there exists a constant  $L=L(r)>0$ such that, $\forall x,x',y,y'\in B_r(0):=\{z \in \mathbb{R}:|z|\leq r\}$
\begin{equation}\label{lipschitzlocale}
\begin{split}
&\qquad|b_0(t,x)-b_0(t,x')|\leq L |x-x'| \qquad |\sigma_0(t,x)-\sigma_0(t,x')|\leq L |x-x'|\\
&\;|b_1(t,x,y)-b_1(t,x',y)|\leq L (|x-x'|+|y-y'|) \qquad |\sigma_1(t,y)-\sigma_1(t,y')|\leq L |y-y'|\\
&\int_Z|K_0(t,x;\zeta)-K_0(t,x';\zeta)|^2\nu(\ud\zeta) \leq L|x-x'|^2\\
&\int_Z|K_1(t,x,y;\zeta)-K_1(t,x',y';\zeta)|^2\nu(\ud\zeta)\leq L (|x-x'|^2+|y-y'|^2)
\end{split}
\end{equation}
\end{description}
\end{ass}
We refer to (\ref{crescita}) and (\ref{lipschitzlocale}) respectively as growth conditions and local Lipschitz conditions.
 \medskip

Other classes of conditions which imply strong existence and weak uniqueness of  solutions to system (\ref{sistema})
without requiring continuity of $K_i$, $i=0,1$, can be deduced by those given in
\cite{CG5}, Appendix A.

\medskip





\end{document}